\documentclass[11pt, a4paper]{amsart}
\usepackage{amsmath}
\usepackage{amssymb}
\usepackage{amsthm}
\usepackage[utf8]{inputenc}
\usepackage[a4paper, total={6in, 10in}]{geometry}
\usepackage{geometry}

\newcommand*\R{\mathbb{R}}

\newcommand*\rn{\mathbb{R}^n}
\newcommand*\Om{\Omega}
\newcommand*\n{\Vert}
\newcommand*\pde{-\ div\ \big(\langle A\nabla u,\nabla u\rangle^{\frac{p-2}{2}}A\nabla u\big)}
\newcommand*\li{L^q(B_1^+)}
\setlength{\topmargin}{-0.2in}
\setlength{\oddsidemargin}{0.3in}
\setlength{\evensidemargin}{0.3in}
\setlength{\textwidth}{6.3in}
\setlength{\rightmargin}{0.7in}
\setlength{\leftmargin}{-0.5in}
\setlength{\textheight}{9.1in}

\usepackage[english]{babel}

\usepackage{etoolbox}
\usepackage{hyperref}
\hypersetup{
    colorlinks=true,
    linkcolor=blue,
    filecolor=magenta,      
    urlcolor=cyan,
}
\title[planar $p$-Poisson equation]{\textbf{ A note on the optimal  boundary regularity for the  planar generalized $p-$Poisson equation}}
\author{Saikatul Haque \\TIFR-Centre for Applicable Mathematics}

\newtheorem{theorem}{Theorem}[section]                                      
\newtheorem{corollary}[theorem]{Corollary}

\newtheorem{lemma}[theorem]{Lemma}
\newtheorem{prop}[theorem]{Proposition}

\newtheorem{remark}[theorem]{Remark}

\usepackage[centerlast,small,sc]{caption}
\setlength{\captionmargin}{20pt}

\begin{document}
\maketitle

\begin{abstract}
In this note, we establish  sharp regularity for solutions to the following generalized $p$-Poisson  equation  
$$\pde=-\ div\ \mathbf{h}+f$$ 
in the plane (i.e. in $\R^2$) for $p>2$ in the presence of  Dirichlet  as well as  Neumann boundary conditions and with $\mathbf{h}\in C^{1-2/q}$, $f\in L^q$, $2<q\leq\infty$. The   regularity assumptions on  the principal part $A$ as well as that  on  the   Dirichlet/Neumann conditions are exactly the same   as in the linear case and therefore sharp (see Remark \ref{sharp} below).  Our main results Theorem \ref{main1} and Theorem \ref{main2} should be thought of as  the boundary   analogues     of the sharp interior  regularity   result established in the  recent interesting paper \cite{ATU} in the case of 
\begin{equation}\label{e0}
-\ div\ (|\nabla u|^{p-2} \nabla u) =f
\end{equation}
for more general variable coefficient operators and with an additional divergence term.

\end{abstract}
Keywords: Optimal boundary regularity; Planar generalized $p$-Poisson equation.
\section{Introduction}

In this paper, we study  sharp $C^{1,\alpha}$ regularity  estimates  in the plane for 
\begin{equation}\label{e1}
\pde=-\ div\ \mathbf{h}+f,
\end{equation} 
with Dirichlet/Neumann boundary conditions when $\mathbf{h}\in C^{1-2/q}$, $f\in L^q$, $2<q\leq\infty$.  In the linear case, i.e. when $p=2$, it is  well known that solutions to 
\[
-\ \Delta u =f\in L^\infty(B_1)
\]
are of class $C^{1, \alpha}_{loc}$ for every $\alpha < 1$ but need not be in $C^{1,1}$.  In the degenerate  setting $p>2$, the situation is quite   different and the smoothing effect of the operator is  less prominent as the following radially symmetric example shows.  More precisely for ($0<a<1$)
\begin{equation}\label{ex1}
u(x)= |x|^{1+a}
\end{equation}
(as mentioned in \cite{az}) we have that 
\begin{equation}\label{ex}
div\ (|\nabla u|^{p-2} \nabla u)=c_{a,p}|x|^{ap-a-1}
\end{equation} for some constant $c_{a,p}\neq0$ if $a\neq3/(p-1)$. The RHS is in $L^q(B_1)$ if $a>(1-2/q)/(p-1)$. This example shows   that the best  regularity  that one can expect  for solutions to  \eqref{e0} is $C^{1,(1-2/q)/(p-1)}$. In fact, this example gives  rise to the following well known  conjecture among the experts in this field and is referred to  as the $C^{p'}$  conjecture.

\medskip

\textbf{Conjecture}($C^{p'}$ conjecture): Solutions to \eqref{e0} are locally of class $C^{1, \frac{1}{p-1}}$ for $p>2$ with $f\in L^\infty$. 

\medskip
Note that the same example shows if $\mathbf{h}\in C^{1-2/q}$ then the best  regularity we can expect for solutions to \eqref{e1} is $C^{1,(1-2/q)/(p-1)}$.

\medskip

Over here,   we would like to  mention that although the conjecture is open, nevertheless  it is well known  that solutions to \eqref{e0} are  locally of class $C^{1, \alpha}$ for some exponent $\alpha$ depending on $p$ and $n$. See for instance, \cite{Di}, \cite{Le}, \cite{Tk}, \cite{Ur}.

\medskip

Very recently, the $C^{p'}$ conjecture has been solved  in the planar case in \cite{ATU}.  The proof in \cite{ATU}  relies on a crucial global $C^{1, \alpha}$ estimate for $p$-harmonic functions in the planar case for  some $\alpha> \frac{1}{p-1}$  combined with a  certain  geometric  oscillation estimate  which has its roots   in the  seminal paper of Caffarelli, see \cite{ca}.  This very crucial  global  $C^{1, \alpha}$ estimate for planar $p$-harmonic functions follows from results in \cite{BK} which exploits the fact that the complex  gradient of a $p$-harmonic function in the plane is a $K$-quasiregular mapping.  Over here, we would like to mention that no analogous result  concerning similar quantitative  regularity   for $p$-harmonic  functions  is known in higher dimensions. 

\medskip

In \cite{ll}, \eqref{e0} was studied with $f\in L^q$, $2<q<\infty$ and optimal interior regularity was achieved in plane by Lindgren and Lindqvist. 
Recently in an interesting work of Araujo and Zhang, \cite{az} more general $p$-Poisson equation (but $\mathbf{h}=0$) is studied and some interior regularity is achieved.
We assume $A\in C^{(1-2/q)/(p-1)}$ to achieve the same regularity as in the case of the $p$-Poisson equation.
 
\medskip 

In this paper, we  make the observation that the ideas  in \cite{ATU}  can be applied to  more general  variable coefficient  equations   with Dirichlet and   Neumann boundary conditions. Over here,  we would like to mention that although our work has been strongly  motivated by that in \cite{ATU}, it has nonetheless required certain delicate adaptations in our setting due to the presence of the boundary datum. To apply certain iteration, as in  \cite{ATU}, one needs to ensure smallness of boundary datum at each step of iteration, as the reader can see in the proofs of Theorems \ref{dC}, \ref{nC}.  Moreover, we finally needed  to combine the interior estimate and the estimate at the boundary  in order to get a uniform estimate and this required a bit of subtle analysis as well, as can be seen in the proof of Theorem \ref{main1} after Theorem \ref{dB}. In closing, we would like to mention two other interesting results  which are closely related to this article. In \cite{KM1} (see also \cite{KM2}),  Kuusi and Mingione established the continuity of $\nabla u$ assuming $f$ in the Lorentz space $L(n, \frac{1}{p-1})$  and where the principal part is sightly more general  as in \cite{az} and has Dini dependence in $x$. Moreover, a moduli of continuity of $\nabla u$ is also established in the same article when the principal part has H\"older dependence in $x$ and $f \in L^{q}$ for $n < q \leq \infty$. 

\medskip

The paper is organized as follows: In Section 2,  in order to keep our paper  self contained, we gather some known regularity results  and then state  our main results. In Section 3, we  prove our main result by following the ideas in \cite{ATU}. Finally in Section 4 and 5, we prove auxiliary regularity results of Giaquinta-Giusti type (see \cite{gg}) for certain  equations with   quadratic non-linearities  which are required  in our analysis.  Such results hold for    arbitrary dimensional Euclidean space $\rn$ and are slight extensions of the results in \cite{gg} and  hence could possibly be of independent interest. 

\section{Preliminaries and statements of the main results}
For notational convenience, we will denote the nonlinear $p$-laplacian operator by $\Delta_p$. We will denote by $W^{1, p}(O)$ the Sobolev space of functions  $g$ which together with its distributional derivatives $g_{x_i}$, $i=1,\cdots,n$, are $L^{p}$ integrable. Also $\nabla g$ (or sometimes $Dg$) will denote the total gradient of $g$. We will denote by $C^{1, \alpha}(O)$ the class of functions $v$ which have H\"older continuous first order  derivatives  with H\"older  exponent $\alpha$. By $\n\cdot\n_{C^{\alpha}}$ we mean the maximum of $L^\infty$-norm and the $\alpha$-Holder semi-norm. Also by $\n\cdot\n_{C^{1,\alpha}}$ we mean the maximum of $L^\infty$-norm and the $\n\cdot\n_{C^{\alpha}}$-norm of the gradient. Finally, an arbitrary point in $\R^n$ will be denoted by $x=(x_1,\cdots,x_n)$  and $B_r(x)$ will denote  the Euclidean ball in $\R^n$ centered  at $x$ of radius $r$. Let us set $B_r^+=B_r(0)\cap\{x_n>0\}$ and $B_r'=B_r(0)\cap\{x_n=0\}$.  Throughout our discussion, we will always assume that $p>2$. 

\medskip

 We  now  state the relevant result from \cite{ATU} concerning  quantitative $C^{1, \alpha}$ regularity for planar $p$-harmonic functions which is obtained from the estimates in \cite{BK}.  See Proposition 2 in \cite{ATU}.

\begin{theorem}\label{quant}
For any $p >2$, there exists $0 < \tau_0 < \frac{p+2}{p-1}$ such that $p$-harmonic functions in $B_1 \subset \R^2$  are locally of class $C^{1, \frac{1}{p-1} + \tau_0}$. Furthermore, one has the  following quantitative estimate
\begin{equation}\label{q0}
||u||_{C^{1, \frac{1}{p-1}+\tau_0}}(B_{1/2}) \leq C_p ||u||_{L^{\infty}(B_1)}
\end{equation}

\end{theorem}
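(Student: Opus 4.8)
The plan is to pass to the complex gradient of $u$, which is a quasiregular mapping of the plane, and then invoke the sharp H\"older estimate for such maps from \cite{BK}; the quantitative bound \eqref{q0} is then obtained by combining the scale-invariant form of that estimate with the classical interior gradient bound for $p$-harmonic functions. Replacing $u$ by $u/\|u\|_{L^\infty(B_1)}$, we may assume $\|u\|_{L^\infty(B_1)}=1$, so that it suffices to prove $\|u\|_{C^{1,\frac1{p-1}+\tau_0}(B_{1/2})}\le C_p$. Identify $\R^2$ with $\mathbb C$ via $z=x_1+ix_2$, put $\partial=\tfrac12(\partial_{x_1}-i\partial_{x_2})$ and $\bar\partial=\tfrac12(\partial_{x_1}+i\partial_{x_2})$, and consider $f:=u_{x_1}-iu_{x_2}=2\,\partial u$. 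A classical computation (see \cite{BK}) that rewrites $\operatorname{div}(|\nabla u|^{p-2}\nabla u)=0$ in complex notation shows that on $\{\nabla u\neq 0\}$ the map $f$ satisfies a Beltrami system
\[
\bar\partial f=\mu\,\partial f+\nu\,\overline{\partial f},\qquad |\mu(z)|+|\nu(z)|\le\tfrac{p-2}{p}<1\ \text{ for a.e. }z,
\]
where $\mu,\nu$ depend on $z$ only through $\arg f$. Since $f\in W^{1,2}_{\mathrm{loc}}(B_1)$ and both sides of the system vanish a.e.\ on $\{f=0\}$ (because $Df=0$ a.e.\ there), the system in fact holds a.e.\ on $B_1$, so $f$ is a $K$-quasiregular map of $B_1$ with distortion $K=\tfrac{1+(p-2)/p}{1-(p-2)/p}=p-1$.

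Generic quasiregular theory (Stoilow factorization together with H\"older continuity of quasiconformal homeomorphisms) would give only $f\in C^{1/(p-1)}_{\mathrm{loc}}$, i.e.\ $u\in C^{1,\frac1{p-1}}_{\mathrm{loc}}$. The improvement to the exponent $\frac1{p-1}+\tau_0$ is the main content of \cite{BK}: one uses that $f$ is not an arbitrary $K$-quasiregular map, but one whose Beltrami coefficients have the special structure above, so that the hodograph (Legendre) transform turns the equation into a linear elliptic equation in divergence form whose eigenvalue analysis yields a strictly larger local H\"older exponent, with the explicit ceiling $\frac1{p-1}+\tau_0<\frac{p+3}{p-1}$, that is $\tau_0<\frac{p+2}{p-1}$. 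To deduce \eqref{q0} I would first invoke the classical interior estimate (\cite{Di}): under the normalization $\|u\|_{L^\infty(B_1)}=1$, one has $\|u\|_{C^{1,\gamma_0}(B_{3/4})}\le C_p$ for the (non-sharp) exponent $\gamma_0=\gamma_0(p)\in(0,1)$, and in particular $\|f\|_{L^\infty(B_{3/4})}\le C_p$. Inserting this into the scale-invariant form of the estimate of \cite{BK} on $B_{3/4}$ yields $\|f\|_{C^{\frac1{p-1}+\tau_0}(B_{1/2})}\le C_p\,\|f\|_{L^\infty(B_{3/4})}\le C_p$, which together with $\|u\|_{L^\infty(B_{1/2})}\le1$ is exactly \eqref{q0}.

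I expect the delicate step to be this last input: packaging the regularity statement of \cite{BK} as a \emph{quantitative}, scale-invariant H\"older estimate with constant depending only on $p$, and, just as importantly, verifying that the gain $\tau_0>0$ is uniform and in particular does not degenerate as one approaches the critical set $\{\nabla u=0\}$ --- this is precisely where the non-generic structure of the Beltrami coefficients $\mu,\nu$, rather than the bare quasiregular H\"older bound, must be used. One clean way to obtain the scale-invariant constant is a rescaling argument combined with a normal-families compactness argument for the family of $p$-harmonic functions bounded in $B_1$, which rules out any loss of the exponent in the limit.
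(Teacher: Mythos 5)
Theorem \ref{quant} is not proved in this paper at all: it is quoted verbatim from \cite{ATU} (Proposition 2 there), which in turn rests on the quasiregularity of the complex gradient and the H\"older estimates of \cite{BK}. Your sketch reconstructs essentially that same route --- normalize $u$, pass to $f=u_{x_1}-iu_{x_2}=2\partial u$, observe it solves a Beltrami-type system with $|\mu|+|\nu|\le (p-2)/p$ and is hence $K$-quasiregular with $K=p-1$, note that the generic quasiregular exponent is only $1/(p-1)$, and obtain the gain $\tau_0>0$ from the special structure of the coefficients exploited in \cite{BK}, with \cite{Di} supplying the interior $C^{1,\gamma_0}$ bound used to control $\|f\|_{L^\infty}$ on an intermediate ball. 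So in substance you are supplying the proof of the cited result rather than deviating from anything in this paper, and the outline is consistent with how \cite{ATU} and \cite{IM} present the matter.

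The one genuine gap is your proposed fallback for the quantitative constant. A ``rescaling plus normal-families compactness'' argument cannot produce \eqref{q0}: the estimate of \cite{Di} gives compactness only in $C^{1}$ (indeed $C^{1,\gamma_0'}$ for $\gamma_0'<\gamma_0$) on interior balls, and the $C^{\frac{1}{p-1}+\tau_0}$ seminorm of the gradient is merely lower semicontinuous under such convergence; a sequence of normalized $p$-harmonic functions with $[\nabla u_j]_{C^{\frac{1}{p-1}+\tau_0}(B_{1/2})}\to\infty$ would converge in $C^1$ to a perfectly regular limit, so no contradiction arises and no uniform bound at the \emph{fixed} exponent follows. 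The quantitative, scale-invariant form of \eqref{q0} must instead come from the fact that the estimates of \cite{BK} are themselves quantitative, with constants depending only on the distortion (hence only on $p$); this is exactly the form recorded in Proposition 2 of \cite{ATU}, and it is the form needed here, since the explicit constant $C_p$ enters the choice of $\lambda_0$ and $\varepsilon$ in Lemma \ref{dc} (and Lemma \ref{cc}).
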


Before proceeding further, we make the following important remark.

\begin{remark}
We note that in the plane,  infact  there is a better regularity  result due to Iwaniec and Manfredi (see \cite{IM}) which assures that any $p$-harmonic function is of class $C^{1, \alpha^*}$ where  $\alpha^* > \frac{1}{p-1} + \tau_0$ where $\tau_0$ is as in Theorem \ref{quant}. More precisely, we have that $\alpha^*$ has the following explicit expression
\[
\alpha^*= \frac{1}{6} \bigg( \frac{p}{p-1} + \sqrt{1 + \frac{14}{p-1} + \frac{1}{(p-1)^2}}\  \bigg)
\]
and this regularity is optimal. However, no explicit estimates on the control of $C^{1, \alpha^*}$ norm  has been stated in \cite{IM} and that is precisely the reason as to why the proof in \cite{ATU} relies on the estimate in  Theorem \ref{quant}. 
\end{remark}
We now state the relevant $C^{1, \alpha}$ boundary regularity result established in \cite{lieb}. These results hold for arbitrary dimensional Euclidean space $\rn$ and will be needed in the compactness arguments as in the proof of Lemma \ref{dc0} and Lemma \ref{cc0}. 
\begin{prop}\label{ci}
Let $u\in W^{1,p}(B_1^+)$ solves \begin{equation}\label{ch}
-div\ \mathbf{A}(x,Du)=-div\ \mathbf{h}+f \text{ in }B_1^+,\ \ \ u=\phi\text{ on }B_1'
\end{equation}
with $\n u\n_{L^\infty(B_1^+)}\leq1$ and $\mathbf{A}$ satisfies
\begin{equation}\label{cg}
\partial_{\zeta_j}\mathbf{A}^i(x,\zeta)\xi_i\xi_j\geq\lambda|\zeta|^{p-2}|\xi|^2,\ |\partial_{\zeta_j}\mathbf{A}^i(x,\zeta)|\leq L|\zeta|^{p-2},\ |\mathbf{A}(x,\zeta)-\mathbf{A}(y,\zeta)|\leq L(1+|\zeta|)^{p-1}|x-y|^\alpha,
\end{equation}
  $\n\mathbf{h}\n_{C^\alpha(\overline{B_1^+})}\leq L$, $\n f\n_{L^q(B_1^+)}\leq L$ where $q>n$, $\n\phi\n_{C^{1,\alpha}(B_1')}\leq L$. Then there exists a positive constant $\beta=\beta(\alpha,\lambda,L,p,q,n)$ such that $$\n u\n_{C^{1,\beta}(\overline{B_{3/4}^+})}\leq C(\alpha,\lambda,L,p,q,n).$$ 
\end{prop}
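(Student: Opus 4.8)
The plan is to reduce this to the boundary $C^{1,\beta}$ regularity theory developed by Lieberman for degenerate equations (\cite{lieb}), verifying that the hypotheses match. First I would flatten/normalize: since the boundary portion $B_1'$ is already flat, the only preliminary step is to subtract off the boundary datum. Setting $v = u - \Phi$, where $\Phi$ is a fixed $C^{1,\alpha}$ extension of $\phi$ to $\overline{B_1^+}$ with $\n\Phi\n_{C^{1,\alpha}(\overline{B_1^+})}\leq C(n)L$ (take, e.g., the harmonic extension or a mollified extension), one checks that $v=0$ on $B_1'$ and $v$ solves an equation of the same structural form: the new principal part $\tilde{\mathbf{A}}(x,\zeta):=\mathbf{A}(x,\zeta+\nabla\Phi(x))$ still satisfies ellipticity and growth bounds of the type \eqref{cg} (with constants depending on $L$ through $\n\nabla\Phi\n_\infty$), and the new right-hand side absorbs $\mathrm{div}\,\mathbf{h}$ together with the extra term coming from $\tilde{\mathbf{A}}$; the $x$-Hölder modulus is preserved since $\nabla\Phi\in C^\alpha$. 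One must be slightly careful here because $\tilde{\mathbf A}$ no longer need vanish at $\zeta=0$, but this is exactly the class of operators Lieberman allows (operators comparable to $(1+|\zeta|)^{p-2}$ rather than $|\zeta|^{p-2}$ near the origin), so the structure conditions of \cite{lieb} still apply; alternatively one writes the perturbed growth as $\leq L(1+|\zeta|)^{p-2}$ which is the form used there.

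Next I would invoke the a priori bounds in the correct order. Step one is the global boundedness/De Giorgi--Nash--Moser type bound, which is immediate here since $\n u\n_{L^\infty(B_1^+)}\leq1$ is assumed, giving control of $v$ in $L^\infty$. Step two is the local boundary Hölder continuity of $u$ itself (Lieberman's boundary version of the De Giorgi estimate for $p$-Laplacian-type equations with $L^q$, $q>n$, right-hand side and $C^\alpha$ boundary data), yielding $u\in C^{\gamma}(\overline{B_{7/8}^+})$ for some $\gamma=\gamma(\alpha,\lambda,L,p,q,n)>0$ with a quantitative bound. Step three, the heart of the matter, is the boundary gradient estimate: Lieberman's main theorem (the boundary analogue of the DiBenedetto--Tolksdorf--Uralt'seva interior $C^{1,\alpha}$ theory) says precisely that under structure conditions of the form \eqref{cg}, with $f\in L^q$ for $q>n$, $\mathbf{h}\in C^\alpha$, and the boundary datum in $C^{1,\alpha}$ on a flat (or $C^{1,\alpha}$) portion of $\partial\Omega$, one has $u\in C^{1,\beta}$ up to that portion with $\beta=\beta(\alpha,\lambda,L,p,q,n)\in(0,1)$ and an estimate $\n u\n_{C^{1,\beta}(\overline{B_{3/4}^+})}\leq C$. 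Thus the content of the proposition is essentially a citation of \cite{lieb}, the only genuine work being the verification that after the reduction $v=u-\Phi$ the transformed data still satisfy the hypotheses with constants controlled by the original $L$, $\alpha$, etc.

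The main obstacle — really the only subtle point — is bookkeeping the dependence of constants and making sure the perturbation by $\nabla\Phi$ does not destroy the structure conditions: one needs $\n\Phi\n_{C^{1,\alpha}(\overline{B_1^+})}$ bounded by a constant times $L$ (an extension-operator estimate on the half-ball, with constant depending only on $n$ and $\alpha$), and then one must check that $|\mathbf{A}(x,\zeta+\nabla\Phi(x))-\mathbf{A}(y,\zeta+\nabla\Phi(y))|$ still obeys a bound of the form $CL(1+|\zeta|)^{p-1}|x-y|^\alpha$, which follows by combining the third inequality in \eqref{cg} with the second (the $\zeta$-Lipschitz bound) applied to the increment $\nabla\Phi(x)-\nabla\Phi(y)$, using $|\nabla\Phi(x)-\nabla\Phi(y)|\leq CL|x-y|^\alpha$. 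Similarly the divergence term $\mathrm{div}\,\mathbf{h}$ combined with $\mathbf{A}(x,\zeta+\nabla\Phi)-\mathbf{A}(x,\zeta)$ contributes a new $\mathbf{h}$-type term that is $C^\alpha$ with norm $\leq C(L)$, and the extra lower-order term lands in $L^\infty\subset L^q$. Once this is in place, the conclusion is exactly Lieberman's boundary $C^{1,\beta}$ estimate applied to $v$ on $B_{3/4}^+$, and adding back $\Phi\in C^{1,\alpha}\subset C^{1,\beta}$ (taking $\beta\leq\alpha$ without loss) gives the stated bound for $u$.
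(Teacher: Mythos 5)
Your route is the same as the paper's: the result is essentially a citation of Lieberman \cite{lieb}, with the observation that the proof there adapts to $f\in L^q$ ($q>n$) and a nonzero $\mathbf{h}$. The paper says exactly this in the remark following Proposition \ref{cj} and does not attempt a reduction to zero Dirichlet data. That reduction is where your write-up goes wrong.

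The step $v=u-\Phi$, $\tilde{\mathbf{A}}(x,\zeta):=\mathbf{A}(x,\zeta+\nabla\Phi(x))$, is both unnecessary and incorrect as stated. It is unnecessary because Lieberman's boundary theorem already allows a nonzero $C^{1,\alpha}$ Dirichlet datum on the flat portion; there is nothing to normalize away. It is incorrect because the translated operator does \emph{not} satisfy structure conditions of the form \eqref{cg}, nor does it fall into a non-degenerate $(1+|\zeta|)^{p-2}$ class. Indeed
$$\partial_{\zeta_j}\tilde{\mathbf{A}}^i(x,\zeta)\xi_i\xi_j \;=\; \partial_{\zeta_j}\mathbf{A}^i\big(x,\zeta+\nabla\Phi(x)\big)\xi_i\xi_j \;\geq\; \lambda\,|\zeta+\nabla\Phi(x)|^{p-2}|\xi|^2,$$
so the degeneracy locus has been translated from $\zeta=0$ to $\zeta=-\nabla\Phi(x)$, a point that \emph{moves with} $x$. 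This is not comparable to $|\zeta|^{p-2}$ (the two vanish at different points, so neither dominates the other up to a constant), and it is certainly not bounded below as $(1+|\zeta|)^{p-2}$ would be. Your assertion that "this is exactly the class of operators Lieberman allows (operators comparable to $(1+|\zeta|)^{p-2}$)" is therefore false: $\tilde{\mathbf{A}}$ is still degenerate, just at an $x$-dependent location, which is outside the structural hypotheses in \cite{lieb}. The $x$-Hölder verification you sketch is fine, but it does not rescue the ellipticity structure.

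The fix is simple and returns you to the paper's argument: do not subtract the boundary extension. Apply Lieberman's boundary $C^{1,\beta}$ estimate to $u$ directly on $B_1^+$ with Dirichlet datum $\phi\in C^{1,\alpha}(B_1')$, and note that the only adaptations needed versus Lieberman's stated hypotheses are (i) replacing $f\in L^\infty$ by $f\in L^q$ with $q>n$ (which changes only the scaling of the right-hand side contributions in the Campanato/perturbation estimates, exactly as in the quadratic case treated in Section 4), and (ii) carrying an additional divergence term $-\operatorname{div}\mathbf{h}$ with $\mathbf{h}\in C^\alpha$, which enters the comparison/perturbation step in the same way as the coefficient Hölder error and is absorbed identically.
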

\begin{prop}\label{cj}
Let $u\in W^{1,p}(B_1^+)$ solves \begin{equation}
-div\ \mathbf{A}(x,Du)=-div\ \mathbf{h}+f \text{ in }B_1^+,\ \ \ \mathbf{A}^n(x,Du)=\phi\text{ on }B_1'
\end{equation}
with $\n u\n_{L^\infty(B_1^+)}\leq1$ and $\mathbf{A}$ satisfies \eqref{cg}, $\n\mathbf{h}\n_{C^\alpha(\overline{B_1^+})}\leq L$, $\n f\n_{L^q(B_1^+)}\leq L$ where $q>n$, $\n\phi\n_{C^{\alpha}(B_1')}\leq L$. Then there exists a positive constant $\beta=\beta(\alpha,\lambda,L,p,q,n)$ such that $$\n u\n_{C^{1,\beta}(\overline{B_{3/4}^+})}\leq C(\alpha,\lambda,L,p,q,n).$$ 
\end{prop}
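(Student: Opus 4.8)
\medskip
\noindent\emph{Proof strategy.} The plan is to follow the perturbation scheme of Lieberman \cite{lieb}, parallel to the Dirichlet case of Proposition \ref{ci} but with the changes forced by the conormal datum: first normalize the boundary datum, then prove a uniform gradient bound up to $B_1'$, and finally upgrade to H\"older continuity of $Du$ by freezing coefficients and iterating, patching at the end with the interior estimate. For the normalization, since $\mathbf{h}\in C^\alpha(\overline{B_1^+})$ its trace $\mathbf{h}^n(\cdot,0)$ lies in $C^\alpha(B_1')$, hence so does $\psi:=\phi-\mathbf{h}^n(\cdot,0)$ with $\n\psi\n_{C^\alpha(B_1')}\le 2L$; extending it trivially by $\Psi(x',x_n):=\psi(x')$ (so $\partial_{x_n}\Psi\equiv0$ and $\n\Psi\n_{C^\alpha(\overline{B_1^+})}\le 2L$), the divergence theorem gives $-\int_{B_1'}\psi\varphi=\int_{B_1^+}\langle\Psi e_n,D\varphi\rangle$ for every $\varphi\in W^{1,p}(B_1^+)$ vanishing on the spherical part of $\partial B_1^+$. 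Thus, with $\mathbf{H}:=\mathbf{h}+\Psi e_n\in C^\alpha(\overline{B_1^+})$, $\n\mathbf{H}\n_{C^\alpha}\le 3L$, the function $u$ solves $-\mathrm{div}\,\mathbf{A}(x,Du)=-\mathrm{div}\,\mathbf{H}+f$ in $B_1^+$ with the \emph{homogeneous} conormal condition, i.e. the flux $\mathbf{F}:=\mathbf{A}(x,Du)-\mathbf{H}$ has vanishing normal component on $B_1'$; it suffices to treat this normalized problem.

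\medskip
\noindent\emph{Gradient bound up to $B_1'$.} Next I would show $\n Du\n_{L^\infty(B_{7/8}^+)}\le C(\alpha,\lambda,L,p,q,n)$ by a Moser/De Giorgi iteration carried out directly on the half-ball: after regularizing the equation one derives a Caccioppoli inequality for (truncations of) $|Du|^2$ using cutoffs that need not vanish on $B_1'$, the homogeneous conormal condition being exactly what makes the resulting boundary integral vanish (or lower order, absorbable since $\mathbf{H}\in C^\alpha\subset L^\infty$ and $f\in L^q$ with $q>n$), after which the iteration proceeds as in the interior case \cite{Di,Tk}. (If the principal part were $x$-independent one could instead reflect $u$ evenly across $\{x_n=0\}$ and quote the interior Lipschitz estimate.) Write $M:=\n Du\n_{L^\infty(B_{7/8}^+)}$.

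\medskip
\noindent\emph{H\"older decay of $Du$ and patching.} With $M$ fixed I would run the freezing-and-comparison (Campanato) iteration near $B_1'$. For $x_0\in B_1'$ and small $r$, let $w$ solve the frozen homogeneous conormal problem $-\mathrm{div}\,\mathbf{A}(x_0,Dw)=0$ in $B_r^+(x_0)$, $\mathbf{A}^n(x_0,Dw)=0$ on $B_r'(x_0)$, $w=u$ on the spherical part; its $C^{1,\gamma}$ regularity up to $B_r'(x_0)$, classical for such degenerate conormal problems \cite{Di,lieb}, gives decay of the mean oscillation of $Dw$ on concentric half-balls at rate $(\rho/r)^{2\gamma}$. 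Testing the difference of the two weak formulations with $u-w$ and using the monotonicity and growth of $\zeta\mapsto\mathbf{A}(x_0,\zeta)$ together with $|Du|,|Dw|\le CM$, I would bound the mean square of $Du-Dw$ on $B_r^+(x_0)$ by a positive power of $r$, coming from the $C^\alpha$-oscillation of $x\mapsto\mathbf{A}(x,\zeta)$ over $B_r^+(x_0)$ (with factor $(1+CM)^{p-1}$), the $C^\alpha$-oscillation of $\mathbf{H}$, and the $L^q$-size of $f$ — the last producing a positive power precisely because $q>n$ (Sobolev--Poincar\'e on $B_r^+(x_0)$, with $u-w$ vanishing on the spherical part). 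Adding the two estimates and invoking the iteration lemma of Giaquinta--Giusti type \cite{gg} in the form established later in this paper yields $r^{-n}\int_{B_r^+(x_0)}|Du-(Du)_{x_0,r}|^2\le Cr^{2\beta}$ for some $\beta>0$, uniformly in $x_0\in B_1'$ and small $r$; combining this boundary Campanato estimate with the interior $C^{1,\gamma}$ estimate on balls contained in $B_1^+$, via the Campanato characterization of $C^{1,\beta}$, gives $\n u\n_{C^{1,\beta}(\overline{B_{3/4}^+})}\le C$.

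\medskip
\noindent\emph{Main obstacle.} The hard parts are the uniform gradient bound up to the flat boundary under only a conormal condition, and, within the decay step, the dichotomy one must make between \emph{non-degenerate} half-balls (where $|Du|\sim M$, the frozen equation uniformly elliptic, and the comparison routine) and \emph{degenerate} ones (where $|Du|$ is small and one argues as for the $p$-Laplacian), exactly as in DiBenedetto's scheme; the normalization in the first step is what prevents these reductions from spoiling the $C^\alpha$ data.
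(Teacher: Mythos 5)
Your outline is consistent with the paper's treatment: the paper gives no independent proof of Proposition \ref{cj}, but simply quotes Lieberman's boundary regularity result \cite{lieb} and remarks that his argument adapts in a straightforward way to nonzero $\mathbf{h}$ and $f\in L^q$ with $q>n$. Your sketch --- correctly absorbing $\phi-\mathbf{h}^n$ into an extra divergence term via the trivial extension, then running the gradient bound up to $B_1'$ and the frozen-coefficient Campanato iteration with the degenerate/non-degenerate dichotomy --- is precisely that adaptation of Lieberman's scheme, and the heavy steps you flag as the main obstacles are exactly what the cited reference supplies.
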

\begin{remark}
We note that although the regularity result in \cite{lieb} is stated for $f \in L^{\infty}$ and $\mathbf{h}=0$, nevertheless the proof in \cite{lieb} can be adapted in  a straightforward way to cover the situations in Proposition \ref{ci} and Proposition \ref{cj}. 
\end{remark}

From now on till the end of section 3, we set $\gamma=(1-2/q)/(p-1)$, $\Gamma=1-2/q$. For $q=\infty$ we interpret $1/q$ as zero. We now state  our main results.  In the case of Dirichlet conditions, we have  the following result. 

\begin{theorem}\label{main1}
Let $\Omega$ be a $C^{1, \gamma}$ domain in $\mathbb{R}^2$ and $x_0 \in \partial \Omega$.  Let   $u$ be a solution to 
\begin{equation}\label{E0}
\begin{cases}
\pde=-\ div\ \mathbf{h}+f\ \text{in}\ \Omega \cap B_{r_0}(x_0)
\\
u=g\ \text{on}\ \partial \Omega \cap B_r(x_0)
\end{cases}
\end{equation}
 for some $r_0 > 0$. On the coefficient matrix $A$, we assume that   $A \in C^{\gamma}(\overline{\Omega})$ and  there exists $\lambda>0$ such that
 \begin{equation}\label{A}
 \lambda|\xi|^2\leq A_{i,j}\xi_i\xi_j\leq\lambda^{-1}|\xi|^2
 \end{equation}
Furthermore assume that  $g \in C^{1, \gamma}(\partial \Omega)$, $\mathbf{h}\in C^{1-2/q}(\overline{\Om})$ and $f \in L^q(\Omega)$ with $q>2$. Then we have that $u \in C^{1, \gamma}(\overline{\Omega \cap B_{\frac{r_0}{2}} (x_0)})$. 
\end{theorem}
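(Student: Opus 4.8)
The plan is to reduce the problem to the flat-boundary setting and then run a Caffarelli-type geometric iteration, exactly mirroring the strategy of \cite{ATU} but keeping careful track of the Dirichlet datum at every stage. First I would flatten the boundary: since $\Omega$ is a $C^{1,\gamma}$ domain, near $x_0$ there is a $C^{1,\gamma}$ diffeomorphism $\Psi$ taking $\Omega\cap B_{r_0}(x_0)$ to a set containing some $B_\rho^+$ and $\partial\Omega\cap B_{r_0}(x_0)$ to $B_\rho'$. Under this change of variables the equation $\pde=-\ div\ \mathbf{h}+f$ retains its structural form: the principal part becomes $-\ div\big(\langle \tilde A\nabla u,\nabla u\rangle^{\frac{p-2}{2}}\tilde A\nabla u\big)$ with a new matrix $\tilde A$ which is still uniformly elliptic as in \eqref{A} and still only $C^\gamma$ (because the Jacobian of $\Psi$ is $C^\gamma$, not better — this is why the $C^{1,\gamma}$ domain hypothesis is sharp), $\mathbf h$ transforms to a new $C^{1-2/q}$ field, $f$ to a new $L^q$ function, and the boundary datum $g$ to a new $C^{1,\gamma}$ function on $B_\rho'$. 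After rescaling we may assume we are on $B_1^+$ with unit-size norms. So it suffices to prove: a $W^{1,p}$ solution of \eqref{ch}-type with $A\in C^\gamma$, $\mathbf h\in C^\Gamma$, $f\in\li$, $q>2$, $n=2$, and Dirichlet datum in $C^{1,\gamma}(B_1')$ is $C^{1,\gamma}$ up to $B_{1/2}'$. By Proposition \ref{ci} we already know $u\in C^{1,\beta}(\overline{B_{3/4}^+})$ for some $\beta>0$, so the only issue is upgrading the exponent to the sharp $\gamma$ at boundary points, and combining with the known sharp interior estimate to get a uniform $C^{1,\gamma}$ bound on $\overline{B_{1/2}^+}$.

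Next I would carry out the geometric iteration at a fixed boundary point, say the origin. Following \cite{ATU}, the scheme is to show that at each dyadic scale $r_k=\theta^k$ one can subtract an affine function $\ell_k(x)=a_k+b_k\cdot x$ — chosen so that $\ell_k$ agrees with the (affine part of the) boundary datum $g$ on $B_{r_k}'$, which forces the normal component of $b_k$ and the constant $a_k$ to track $g$ — such that the rescaled excess $\sup_{B_{r_k}^+}|u-\ell_k|$ decays like $r_k^{1+\gamma}$ and the slopes $b_k$ converge. The one-step improvement is proved by contradiction/compactness: if no such affine approximation existed at some scale, a blow-up sequence would converge to a solution of a limiting equation — after subtracting off the datum, this limit solves a constant-coefficient $p$-Laplace-type equation (the variable coefficient $\tilde A$ freezes to a constant matrix, which by a linear change of coordinates is $\Delta_p$) in a half-ball with \emph{zero} Dirichlet data on the flat part, hence by odd reflection extends to a full-ball $p$-harmonic function; then Theorem \ref{quant} gives the crucial $C^{1,\frac{1}{p-1}+\tau_0}$ estimate with $\frac{1}{p-1}+\tau_0>1+\gamma$ possible after absorbing the lower-order terms, and one reaches the contradiction exactly as in \cite{ATU}. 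The roles of $\mathbf h\in C^\Gamma$ and $f\in L^q$ are to contribute controlled error terms of order $r_k^{1+\Gamma}$ and $r_k^{2-2/q}=r_k^{1+\Gamma}$ respectively to the right-hand side after scaling, which are lower-order compared to what we are tracking precisely because $\gamma=\Gamma/(p-1)<\Gamma$; and $A\in C^\gamma$ contributes an error of order $r_k^\gamma\cdot(\text{size of gradient})^{p-1}$, which, because the natural scaling of the gradient at a boundary point where $b_k\to b_\infty$ is bounded, is again of the right lower order. The delicate point flagged in the introduction is that $\ell_k$ must absorb the datum at \emph{each} step so that the remaining function has genuinely small boundary values on $B_{r_k}'$; this requires writing $g$ near the origin as its first-order Taylor polynomial plus a $C^{1,\gamma}$ remainder of size $r_k^{1+\gamma}$ and folding the Taylor polynomial into $\ell_k$.

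After the pointwise estimate, I would assemble the global conclusion. The iteration gives, for every boundary point $x_0'\in B_{1/2}'$, an affine function $\ell_{x_0'}$ with $|u(x)-\ell_{x_0'}(x)|\le C|x-x_0'|^{1+\gamma}$ on small half-balls, with $C$ and the slope uniformly bounded; standard arguments (as in Caffarelli's work, reproduced in \cite{ATU}) then convert this pointwise-$C^{1,\gamma}$-at-the-boundary statement, together with the interior $C^{1,\gamma}$ estimate — which here I would obtain by combining the interior result of \cite{ATU}/\cite{az} (valid for this variable-coefficient, divergence-term equation under $A\in C^\gamma$, as the paper asserts) with the boundary estimate via a covering/Campanato-type argument — into a genuine $C^{1,\gamma}(\overline{B_{1/2}^+})$ bound. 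Concretely one shows the gradient $\nabla u$ has a $\gamma$-H\"older modulus on $\overline{B_{1/2}^+}$ by comparing, for two points $x,y$, either both near the same boundary point (use the pointwise boundary estimate) or with $|x-y|$ small relative to the distance of the pair to $\partial B_1^+\cap\{x_2=0\}$ (use interior), interpolating in the intermediate regime; this is the "bit of subtle analysis" the introduction refers to after Theorem \ref{dB}. Unwinding the flattening diffeomorphism, which is $C^{1,\gamma}$ and hence preserves $C^{1,\gamma}$ regularity, transfers the estimate back to $\overline{\Omega\cap B_{r_0/2}(x_0)}$.

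The main obstacle, as I see it, is not any single estimate but the bookkeeping in the iteration: ensuring that after subtracting $\ell_k$ and rescaling to unit scale, \emph{all} the error terms ($\mathbf h$, $f$, the oscillation of $A$, and crucially the boundary remainder of $g$) remain uniformly small, so that the compactness step produces a clean zero-boundary-data $p$-harmonic limit and the geometric decay closes with a fixed $\theta$ independent of $k$. The fact that the target exponent $1+\gamma$ sits strictly below $1+\frac{1}{p-1}+\tau_0$ (the reflected-$p$-harmonic regularity from Theorem \ref{quant}) gives the necessary room, but verifying that every perturbation really is lower-order under the chosen scaling — in particular that the natural normalization makes the gradient bounded at the boundary so the $|\nabla u|^{p-2}$-type weights do not blow up — is where the care is needed. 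This should be essentially the same analysis as in the proofs of Theorems \ref{dC} and \ref{dB}, adapted to the curved domain via the flattening step.
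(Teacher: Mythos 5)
Your overall skeleton --- flatten the boundary, reduce to $B_1^+$ with small data, run a compactness/corrector argument against Theorem~\ref{quant}, then combine boundary and interior estimates --- matches the paper's route. But the heart of your one-step improvement is an affine-subtraction iteration: subtract $\ell_k(x)=a_k+b_k\cdot x$, rescale the excess, and blow up to a $p$-harmonic limit on the half-ball with zero Dirichlet data. This does not close for the $p$-Laplacian, because the operator is not invariant under subtraction of an affine function with nonzero slope: writing $v=u-\ell_k$, one has $\Delta_p u=\ div\big(|\nabla v+b_k|^{p-2}(\nabla v+b_k)\big)$, an equation degenerate where $\nabla v=-b_k$, not at the origin of gradient space. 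The rescaled error then does not satisfy (even approximately) a $p$-Laplace equation, so the blow-up limit is not a $p$-harmonic function to which Theorem~\ref{quant} applies. You flag the concern about ``$|\nabla u|^{p-2}$-type weights'' at the end, but you do not say how to get around it, and as written the iteration would fail precisely when $b_k$ is of order one.

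The paper avoids this by iterating only on the raw oscillation $\sup_{B_{\lambda_0^k}^+}|u-u(0)|$, without subtracting a slope (Lemma~\ref{dc}, Corollary~\ref{dc5}, Theorem~\ref{dC}); at each step the corrector lemma still produces a genuine $p$-harmonic comparison, because no affine is subtracted. This yields only the degenerate growth estimate $\sup_{B_r^+}|u-u(0)|\le Cr^{1+\gamma}\big(1+|\nabla u(0)|r^{-\gamma}\big)$, not the sharp affine decay. The upgrade (Theorem~\ref{dA}) then splits into two regimes: if $|\nabla u(0)|\le r^\gamma$ the affine correction is already absorbed in the crude bound, while if $|\nabla u(0)|>r^\gamma$ one rescales by $\mu=|\nabla u(0)|^{1/\gamma}$ so that $|\nabla v|\ge\tfrac12$ on a fixed half-ball, the equation becomes uniformly elliptic (non-degenerate), and one invokes the Giaquinta--Giusti type boundary estimate for quadratic nonlinearities (Theorem~\ref{d0} in Section~\ref{sd}; Neumann analogue in Section~\ref{sc}). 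That dichotomy and the auxiliary quadratic theory --- the whole reason Sections~4 and 5 exist --- are missing from your proposal, and without them the sharp boundary $C^{1,\gamma}$ estimate does not follow from the $p$-harmonic compactness argument alone.
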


Now in the Neumann case, our result can be stated as follows.
\begin{theorem}\label{main2}
Let $\Omega$ be a $C^{1, \gamma}$ domain in $\mathbb{R}^2$ and $x_0 \in \partial \Omega$. Denote by  $\nu$  the outward unit normal to $\partial \Omega$.   Let   $u$ be a solution to 
\begin{equation}\label{E1}
\begin{cases}
\pde=-\ div\ \mathbf{h}+f\ \text{in}\ \Omega \cap B_{r_0}(x_0)
\\
\langle A\nabla u, \nabla u\rangle^{\frac{p-2}{2}} \langle A \nabla u, \nu\rangle =|g|^{p-2} g \ \text{on}\ \partial \Omega \cap B_r(x_0)
\end{cases}
\end{equation}
 for some $r_0 > 0$. The assumptions on $A$ are as in the previous theorem. Furthermore assume that $g \in C^{ \gamma}(\partial\Omega)$ , $\mathbf{h}\in C^{1-2/q}(\overline{\Om})$ and $f \in L^q(\Omega)$ with $q>2$. Then  $u \in C^{1, \gamma}(\overline{\Omega \cap B_{\frac{r_0}{2}} (x_0)})$. 
\end{theorem}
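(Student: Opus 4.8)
The plan is to reduce Theorem \ref{main2} to the flat-boundary situation and then run the same perturbation-iteration scheme used for the Dirichlet case, substituting the Neumann regularity input (Proposition \ref{cj}) for the Dirichlet one (Proposition \ref{ci}). First I would flatten the boundary: since $\partial\Omega$ is $C^{1,\gamma}$, near $x_0$ there is a $C^{1,\gamma}$ diffeomorphism $\Psi$ taking $\Omega\cap B_{r_0}(x_0)$ to a portion of $B_1^+$ and $\partial\Omega\cap B_{r_0}(x_0)$ to a portion of $B_1'$. Under this change of variables the operator $\pde$ transforms into an operator of the same structural type $-\operatorname{div}\mathbf{A}(x,Du)$ with a new coefficient matrix $\tilde A = |\det D\Psi|^{-1}(D\Psi) A (D\Psi)^{t}$ which still satisfies ellipticity \eqref{A} (with a possibly worse constant) and still lies in $C^\gamma$, since $D\Psi\in C^\gamma$; the divergence datum $\mathbf{h}$ transforms to a new $C^{\Gamma}$ field and $f$ to a new $L^q$ function (note $\Gamma = 1-2/q > 2/q$... more precisely $C^{\Gamma}\subset L^q$ is automatic). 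Crucially, the conormal derivative is the natural object under this transformation: the Neumann condition $\langle A\nabla u,\nabla u\rangle^{(p-2)/2}\langle A\nabla u,\nu\rangle = |g|^{p-2}g$ becomes exactly $\mathbf{A}^n(x,Du)=\tilde g$ on $B_1'$, with $\tilde g\in C^\gamma$ because $g\in C^\gamma$ and the Jacobian factor is $C^\gamma$. So after flattening we are solving a problem of precisely the form covered by Proposition \ref{cj} and (I expect) by an auxiliary flat-boundary Neumann analogue of Theorem \ref{dB}, call it Theorem \ref{nB}.

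Next I would invoke that flat-boundary Neumann estimate: by the machinery developed for the Neumann case (the compactness/improvement-of-flatness Lemma \ref{cc0} giving smallness of oscillation of $Du$ at scale $\rho$, iterated using Theorem \ref{quant} for the frozen-coefficient $p$-harmonic comparison functions, together with the Neumann version of Theorem \ref{nC} that handles rescaling while keeping the boundary datum small), one obtains $u\in C^{1,\gamma}$ up to the flat boundary, i.e. on $\overline{B_{1/2}^+}$ say, with the quantitative bound on $\|u\|_{C^{1,\gamma}}$. The exponent $\gamma=(1-2/q)/(p-1)$ is exactly the one forced by the example \eqref{ex1} and is what the scheme of \cite{ATU} delivers because the global planar $p$-harmonic estimate \eqref{q0} gives room $\tau_0$ above $1/(p-1)$, and one trades that excess against the $C^\gamma$ perturbation of the coefficients, the $C^\Gamma$ regularity of $\mathbf h$, the $L^q$ norm of $f$, and the $C^\gamma$ regularity of the Neumann datum $g$. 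Finally I would transport the estimate back through $\Psi^{-1}$: since $\Psi^{-1}\in C^{1,\gamma}$ and composition with a $C^{1,\gamma}$ map preserves $C^{1,\gamma}$ (using $\gamma<1$), we conclude $u\in C^{1,\gamma}(\overline{\Omega\cap B_{r_0/2}(x_0)})$, which is the assertion.

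The main obstacle, exactly as the authors flag for Theorem \ref{main1}, is twofold. First, one must verify that at each step of the iteration the \emph{Neumann} datum can be rescaled to be small: under the natural rescaling $u_\rho(x)=u(\rho x)/(\rho^{1+\gamma})$ the transformed conormal boundary term picks up a factor $\rho^{(p-1)\cdot?}$ that has to be checked to shrink (this is the content I would isolate in the Neumann analogue of Theorem \ref{nC}), and unlike the Dirichlet case the datum sits inside a nonlinear expression $|g|^{p-2}g$, so one should track it through the flattening and the rescaling with some care — the $p$-homogeneity of the conormal term is what makes this work out, but it is the delicate point. Second, one must splice the boundary estimate on $\overline{B_{1/2}^+}$ with the interior estimate (the variable-coefficient $p$-Poisson result, essentially the content of \cite{ATU}, \cite{az}, \cite{ll}) to get a single $C^{1,\gamma}$ bound on the whole half-neighborhood; here one covers $\overline{\Omega\cap B_{r_0/2}(x_0)}$ by boundary balls where Theorem \ref{nB} applies and interior balls of radius comparable to the distance to $\partial\Omega$ where the interior estimate applies (with the radius-dependent scaling absorbed into $\gamma$), and then checks that the two families of local $C^{1,\gamma}$ bounds patch to a global one — a standard but slightly fiddly argument, identical in spirit to the one carried out after Theorem \ref{dB} in the Dirichlet proof.
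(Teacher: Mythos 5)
Your overall plan matches the paper's: flatten the boundary, normalize $A(0)=I$ by an affine change of variables, run a compactness/corrector argument at the flat boundary (Lemma \ref{cc0}/\ref{cc}, Corollary \ref{cc5}), iterate to get a pointwise $C^{1,\gamma}$ decay at boundary points (Theorem \ref{nC} and the unnamed Neumann analogue of Theorem \ref{dA}), and splice against the interior estimate (Theorem \ref{dB}) to cover pairs of points off the boundary. That is exactly the structure the paper uses.

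But you flag the crucial difficulty — controlling the rescaled Neumann datum $g_1=\tfrac{\lambda_0^k}{a_k}\,g(\lambda_0^k\cdot)$ at every step of the iteration — and then wave it away as a consequence of ``$p$-homogeneity.'' That is not what makes it close. The genuine obstacle is that $g(0)$ need not vanish, so the $\lambda_0^k$ decay of the numerator is not enough by itself; the denominator $a_k=\lambda_0^{1+\gamma}a_{k-1}+\tfrac1\delta|\nabla u(0)|\lambda_0^k$ only has the term $\tfrac1\delta|\nabla u(0)|\lambda_0^k$ to save you. What rescues the estimate is the observation, read directly off the conormal condition at the origin with $A(0)=I$, namely $|\nabla u(0)|^{p-2}\partial_n u(0)=|g(0)|^{p-2}g(0)$, hence $|g(0)|\le K|\nabla u(0)|$. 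One then sets $\delta=\delta_0/(1+K)$, splits the bound for $|g_1|$ by comparing $|\nabla u(0)|\lambda_0^k$ with $\delta\lambda_0^{(1+\gamma)k}$, and either way lands below $\delta_0$. (The Dirichlet iteration secretly uses the analogous fact $|\nabla g(0)|\le|\nabla u(0)|$, since the tangential gradient of $u$ at a boundary point is $\nabla g(0)$.) Without this input the iteration in Theorem \ref{nC} does not close, so this is a real gap in the proposal, not a bookkeeping detail. Two smaller points: the ``Theorem \ref{nB}'' you posit is not needed — the interior comparison is the same Theorem \ref{dB} regardless of the boundary condition; and in the corrector lemma the reflection across $B_1'$ must be the one compatible with the homogeneous conormal condition of the comparison $p$-harmonic function (even reflection, as opposed to the odd reflection used for the Dirichlet case), which you do not address.
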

\begin{remark}\label{sharp}
Over here we would like to mention that  the regularity result  in  Theorem \ref{main1} and Theorem \ref{main2}  are sharp in the sense that even  in the linear  case $p=2$, we would  get the same conclusion with the stated  regularity  assumptions on $A, \Om$ and $g$. 
\end{remark}

\section{Proof of the main  results}

\subsection{Proof of Theorem \ref{main1}}
 Without loss of generality,  by rotation of coordinates,  we may assume that  there exists $s< r_0$ such that for $x_0\in\partial\Om$, $\Om \cap B_{s}(x_0)$ is given by $\{(x_1, x_2):x_2 > \phi(x_1)\}\cap B_{s}(x_0)$ where $\phi \in C^{1, \gamma}$. It suffices to show that $u \in C^{1, \gamma}(\overline{\Om \cap B_{\frac{s}{2}}(x_0)})$ and then the  conclusion of the theorem would follow by a standard covering argument.  Now by using the transformation,
 \begin{equation}\label{tran}
 \begin{cases}
 y_1=x_1
 \\
 y_2= x_2- g(x_1)
 \end{cases}
 \end{equation}
 we may  assume that $u$ solves 
 \begin{equation}\label{E2}
 \begin{cases}
 \pde=-\ div\ \mathbf{h}+f\ \text{in}\ B_s^+
 \\
 u=g\ \text{on}\ B_{s} \cap \{x_2=0\}
 \end{cases}
 \end{equation}
 where $A, f, g$ satisfies similar assumptions as in the hypothesis of Theorem \ref{main1}. Then by the following change of variables
 \[
 x \mapsto A(0)^{\frac{-1}{2}} x
 \]
 and a subsequent rotation, we can reduce it to the case that $A(0)=I$ where $I$ denote the identity matrix. 
  Now by using the   rescaling  $u_{s}(x)= u(sx)$, we may assume that $s=1$ in \eqref{E2}. We now let $$\mathcal{M}_\lambda=\big\{M\in C^\gamma(B_1^{+}\cup B_1',M_2(\R)):\lambda|\xi|^2\leq M_{i,j}\xi_i\xi_j\leq\lambda^{-1}|\xi|^2, \n M-I\n_{C^{\gamma}(B_1^{+})} \leq 1, M(0)= I\big\},$$
\begin{equation}\label{0}
\mathcal{O}_\lambda=\big\{M\in C^\gamma(B_1,M_2(\R)):\lambda|\xi|^2\leq M_{i,j}\xi_i\xi_j\leq\lambda^{-1}|\xi|^2, \n M-I\n_{C^{\gamma}(B_1)} \leq 1, M(0)= I\big\}
\end{equation}
 Following \cite{ATU},  we denote by $\mathcal{A}(p, d)$ the following class of functions.  $$\mathcal{A}(p,d)=\big\{u\in L^{\infty}(B_1)\cap W^{1,p}(B_{1/2}):\Delta_pu=0\text{ in }B_{1/2}\big\}.$$

We now  have the following boundary version of the small $C^{1}$ corrector lemma (See Lemma 3 in \cite{ATU}). Let $\alpha\geq\gamma$.
\begin{lemma}\label{dc0}
Let $u\in W^{1,p}(B_{1}^{+})$ be a weak solution of $$\pde=-div\ \mathbf{h}+f\text{ in }B_{1}^{+},\ u=g\text{ on }B_{1}',$$ with $\n u\n_{L^\infty(B_{1}^{+})}\leq1$, $A\in\mathcal{M}_\lambda$. Then for given $\varepsilon>0$, there exists $\delta=\delta(p,\varepsilon,\lambda)>0$ such that if $\n \mathbf{h}\n_{C^\alpha(\overline{B^+})}\leq\delta$, $\n f\n_{L^q(B_1^{+})}\leq \delta$, $\n g\n_{C^{1,\gamma}(B_1^{'})}\leq\delta$ and $\n A-I\n_{C^\gamma(B_1^{+})}\leq\delta$, then we can find a corrector $\xi\in C^1(\overline{B_{1/2}^{+}})$, with $$|\xi(x)|\leq\varepsilon\text{ and }|\nabla\xi(x)|\leq\varepsilon\text{ for all }x\in B_{1/2}^{+}$$ satisfying$$-\ \Delta_p(u+\xi)=0\text{ in }B_{1/2}^{+} \text{ and }u+\xi=0\text{ on }B_{1/2}'.$$
\end{lemma}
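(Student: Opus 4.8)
The plan is to argue by contradiction and compactness, in the spirit of Caffarelli-type perturbation arguments and exactly parallel to Lemma~3 in \cite{ATU}. Suppose the statement fails: then there is an $\varepsilon_0 > 0$ and sequences $u_k \in W^{1,p}(B_1^+)$, $A_k \in \mathcal{M}_\lambda$, $\mathbf{h}_k$, $f_k$, $g_k$ with $\n u_k\n_{L^\infty(B_1^+)} \leq 1$, solving $-\,\mathrm{div}\,(\langle A_k\nabla u_k,\nabla u_k\rangle^{\frac{p-2}{2}} A_k\nabla u_k) = -\,\mathrm{div}\,\mathbf{h}_k + f_k$ in $B_1^+$ with $u_k = g_k$ on $B_1'$, and with $\n\mathbf{h}_k\n_{C^\alpha} + \n f_k\n_{L^q(B_1^+)} + \n g_k\n_{C^{1,\gamma}(B_1')} + \n A_k - I\n_{C^\gamma(B_1^+)} \to 0$, yet no admissible corrector exists for $u_k$ at level $\varepsilon_0$.

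The key steps would be as follows. First, apply the boundary $C^{1,\beta}$ estimate of Proposition~\ref{ci} (with $\mathbf{A}(x,\zeta) = \langle A_k(x)\zeta,\zeta\rangle^{\frac{p-2}{2}}A_k(x)\zeta$, whose structure conditions \eqref{cg} follow from ellipticity \eqref{A} and $A_k \in \mathcal{M}_\lambda$, the $\alpha$-Hölder-in-$x$ bound coming from $A_k \in C^\gamma$ with $\gamma \le \alpha$ after possibly relabeling): this gives a uniform bound $\n u_k\n_{C^{1,\beta}(\overline{B_{3/4}^+})} \leq C$. By Arzelà–Ascoli, along a subsequence $u_k \to u_\infty$ in $C^1(\overline{B_{1/2}^+})$ for some $u_\infty \in C^{1,\beta}$ with $\n u_\infty\n_{L^\infty} \le 1$. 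Second, pass to the limit in the weak formulation: since $A_k \to I$ uniformly, $\nabla u_k \to \nabla u_\infty$ uniformly on $\overline{B_{1/2}^+}$, $\mathbf{h}_k \to 0$, and $f_k \to 0$ weakly in $L^q$, the limit $u_\infty$ is a weak solution of $\Delta_p u_\infty = 0$ in $B_{1/2}^+$; and since $g_k \to 0$ in $C^{1,\gamma}(B_1')$, we get $u_\infty = 0$ on $B_{1/2}'$. Third, extend $u_\infty$ by odd reflection (or invoke interior $p$-harmonic regularity together with the flat Dirichlet boundary regularity of \cite{lieb}) to conclude $u_\infty$ is $C^1$ up to $B_{1/2}'$, so the corrector $\xi_k := u_\infty - u_k$ works for large $k$: indeed $u_k + \xi_k = u_\infty$ is $p$-harmonic in $B_{1/2}^+$ and vanishes on $B_{1/2}'$, while $\n\xi_k\n_{C^1(\overline{B_{1/2}^+})} = \n u_\infty - u_k\n_{C^1} \to 0 < \varepsilon_0$, contradicting the choice of $\varepsilon_0$.

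The one subtlety worth flagging — and the main obstacle — is the passage to the limit in the nonlinear principal part together with verifying that $u_\infty$ is genuinely $p$-harmonic \emph{up to the flat boundary} and admits a $C^1$ extension there. The uniform convergence of gradients from Proposition~\ref{ci} is what makes the nonlinear term $\langle A_k\nabla u_k,\nabla u_k\rangle^{\frac{p-2}{2}}A_k\nabla u_k \to |\nabla u_\infty|^{p-2}\nabla u_\infty$ pass safely (no weak-limit-of-nonlinearity issue arises precisely because of the strong $C^1$ compactness), so the real content is just making sure Proposition~\ref{ci} applies uniformly in $k$: this requires checking that the rescaled/normalized data $(\mathbf{h}_k, f_k, g_k, A_k)$ satisfy the fixed bounds in \eqref{cg} and the hypotheses of Proposition~\ref{ci} with constants independent of $k$, which holds since smallness in the relevant norms in particular gives boundedness. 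A minor point is that one should take $\delta = \delta(p,\varepsilon,\lambda)$ to also control, via $\n A - I\n_{C^\gamma} \le \delta \le 1$, membership in $\mathcal{M}_\lambda$ so that the structure conditions of Proposition~\ref{ci} are met with uniform $L$; this is routine. Everything else is the standard compactness machinery, and no genuinely new difficulty beyond \cite{ATU} enters except the bookkeeping of the boundary datum $g_k$, which is handled by its smallness in $C^{1,\gamma}(B_1')$.
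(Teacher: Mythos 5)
Your proposal is correct and follows essentially the same route as the paper: a contradiction/compactness argument using the uniform $C^{1,\beta}$ boundary estimate of Proposition \ref{ci}, Arzel\`a--Ascoli to extract a $C^1$-convergent subsequence, passage to the limit in the weak formulation to see that the limit is $p$-harmonic in $B_{1/2}^+$ and vanishes on $B_{1/2}'$, and the choice $\xi_j = u_\infty - u_j$ as corrector. The only superfluous element is your odd-reflection/extension step: since the uniform estimate already gives $u_\infty \in C^{1,\beta}(\overline{B_{3/4}^+})$, no further boundary regularity argument is needed for the corrector to be $C^1$ up to $B_{1/2}'$.
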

\begin{proof}
If not there exists an $\varepsilon_0>0$ and sequences $\{u_j\}$ in $ W^{1,p}(B_1^{+})$, $\{\mathbf{h}_j\}$ in $C^\alpha(\overline{B_1^+})$, $\{f_j\}$ in $L^q(B_1^{+})$, $\{g_j\}$ in $C^{1,\gamma}(B_1^{'})$ and $\{A_j\}$ in $\mathcal{M}_\lambda$ satisfying $$-\ div(\langle A_j\nabla u_j,\nabla u_j\rangle^{\frac{p-2}{2}}A_j\nabla u_j)=-div\ \mathbf{h}_j +f_j\text{ in }B_1^+,\ u_j=g_j\text{ on }B_1^{'},$$ $$\n u_j\n_{\li}\leq1,\n \mathbf{h}_j\n_{C^\alpha(\overline{B^+})}\leq\frac{1}{j},\ \n f_j\n_{L^q(B_1^+)}\leq\frac{1}{j},\ \n g_j\n_{C^{1,\gamma}(B_1^{'})}\leq\frac{1}{j}\ \text{and }\n A_j-I\n_{C^\gamma(B_1^{+})}\leq\frac{1}{j}$$ but for each $\xi$ satisfying $$-\ \Delta_p(u_j+\xi)=0\text{ in }B_{1/2}^{+}\text{ and }u_j+\xi=0\text{ on }B_{1/2}^{'}.$$ we have $\n\xi\n_{C^1(B_{1/2}^{+})}>\varepsilon_0$.
Using  Proposition \ref{ci}, we have that for some $\beta>0$ depending on $p,\gamma, \lambda$,
$$\n u_j\n_{C^{1,\beta}(\overline{B_{3/4}^{+}})}\leq C=C(\lambda,p,q,\gamma).$$
Now by applying Arzela-Ascoli, we can assert that for a subsequence $\{u_j\}$,  $u_j\rightarrow u_\infty$ in $C^{1}(\overline{B_{3/4}^+})$ and $u_\infty\in C^{1,\beta}(\overline{B_{3/4}^+})$. Also by  a standard weak type  argument using test functions, we can show that  $$-\ \Delta_pu_\infty=0 \text{ in }B_{1/2}^{+}\text{ and }u_{\infty}=0\ \text{on}\ B_{1/2}^{'}$$ At this point, by setting $\xi_j=u_\infty-u_j$, we see that  $$-\ \Delta_p(u_j+\xi_j)=-\ \Delta_pu_\infty=0 \text{ in }B_{1/2}^{+}\text{ and }u_j+\xi_j=0\text{ on }B_{1/2}^{'}$$ Now for large enough $j$, we have that   $\n\xi_j\n_{C^1(\overline{B_{1/2}^+})}\leq\varepsilon_0$ which is a contradiction.  This implies the claim of the Lemma. 
\end{proof}
We now discuss the applicability of Lemma \ref{dc0} (and subsequent results in this sections). Let $u\in L^\infty(B_1^+)$ solves $$\pde=-\ div\ \mathbf{h}+f\text{ in }B_1,\ \ \ u=g\text{ on }B_1'$$ with $f\in\li.$ Set for $0<r\leq1$, $$v(x)=\frac{u(rx)-u(0)}{\n u\n_{L^\infty(B_1^+)}}\text{ for }x\in B_1^+.$$
Then $\n v\n_{L^\infty(B_1)}\leq1$ and $v$ solves 
$$-\ \text{div}(\langle B\nabla v,\nabla v\rangle^{\frac{p-2}{2}}B\nabla v)=-\ div\ \mathbf{h_1}+f_1\text{ in }B_1,\ \ \ v=g_1\text{ on }B_1'$$ where $B(x)=A(rx)$, $$\mathbf{h_1}(x)=\frac{r^{p-1}}{\n u\n_{L^\infty(B_1^+)}^{p-1}}\big(\mathbf{h}(rx)-\mathbf{h}(0)\big),\ \ \ f_1(x)=\frac{r^p}{\n u\n_{\li}^{p-1}}f(rx),\ \ \ g_1(x)=\frac{g(rx)-g(0)}{\n u\n_{L^\infty(B_1^+)}}$$ for $x\in B_1$. Note that for $x,y\in B_1$, $$\n[B-I](x)-[B-I](y)\n=\n A(rx)-A(ry)\n\leq r^\gamma\n A-I\n_{C^\gamma(B_1)}\n x-y\n^\gamma,$$
and \begin{align*}
\n f_1\n_{\li}&=\frac{r^p}{\n u\n_{L^\infty(B_1^+)}^{p-1}}\bigg(\int_{B_1^+}|f(rx)|^qdx\bigg)^{1/q}=\frac{r^{p-2/q}}{\n u\n_{L^\infty(B_1^+)}^{p-1}}\bigg(\int_{B_r}|f(y)|^qdy\bigg)^{1/q}\\
&=\frac{r^{p-2/q}}{\n u\n_{L^\infty(B_1^+)}^{p-1}}\n f\n_{L^q(B_r^+)}\leq\frac{r}{\n u\n_{L^\infty(B_1^+)}^{p-1}}\n f\n_{L^q(B_r^+)} .
\end{align*}
Therefore choosing $r\in(0,1)$, small enough, we can ensure $\n B-I\n_{C^\gamma(B_1)}\leq\delta$, $\n \mathbf{h_1}\n_{C^\alpha(\overline{B^+})}\leq\delta$, $\n f_1\n_{L^q(B_1^{+})}\leq \delta$, $\n g_1\n_{C^{1,\gamma}(B_1^{'})}\leq\delta$ so that we can apply Lemma \ref{dc0}. Also for $r\in(0,1)$ small enough, $|\nabla v|\leq1$ in $B_1^+$, using Proposition \ref{ci}. In order to achieve the final result we need to rescale back from $v$ to $u$.

\begin{lemma}\label{dc}
There exists a $\lambda_0\in(0,1/2)$ and $\delta_0>0$ such that if $\n \mathbf{h}\n_{C^\alpha(\overline{B^+})}\leq\delta_0$, $\n f\n_{L^q(B_1^{+})}\leq \delta_0$, $\n g\n_{C^{1,\gamma}(B_1^{'})}\leq\delta_0$ and $\n A-I\n_{C^\gamma(B_1^{+})}\leq\delta_0$ with $A\in\mathcal{M}_\lambda$ and $u\in W^{1,p}(B_1^+)$ is a weak solution to $$\pde=-\ div\ \mathbf{h}+f\text{ in }B_{1}^{+},\ u=g\text{ on }B_{1}',$$ with $\n u\n_{L^\infty(B_1^+)}\leq1$, then
$$\sup_{x\in B_{\lambda_0}^+}|u(x)-[u(0)+\nabla u(0)\cdot x]|\leq\lambda_0^{1+\gamma},\ \ \ \sup_{x\in B_{\lambda_0}^+}|\nabla u(x)-\nabla u(0)|\leq\lambda_0^\gamma.$$
\end{lemma}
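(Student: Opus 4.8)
The plan is to carry out one improvement-of-flatness step: approximate $u$ in $C^1$ by a genuinely $p$-harmonic function that vanishes on the flat boundary, transport the quantitative planar estimate of Theorem~\ref{quant} onto it via odd reflection, and then fix the three parameters $\lambda_0,\varepsilon,\delta_0$ in the correct order so that the H\"older gain $\alpha_0:=\frac{1}{p-1}+\tau_0>\gamma$ beats the rescaling loss.

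First I would apply Lemma~\ref{dc0} with a parameter $\varepsilon>0$ still to be chosen: as long as $\delta_0\le\delta(p,\varepsilon,\lambda)$, it produces $\xi\in C^1(\overline{B_{1/2}^+})$ with $|\xi|,|\nabla\xi|\le\varepsilon$ on $B_{1/2}^+$ such that $w:=u+\xi$ satisfies $-\Delta_p w=0$ in $B_{1/2}^+$ and $w=0$ on $B_{1/2}'$. I would then extend $w$ to $B_{1/2}$ by odd reflection $w(x_1,x_2)\mapsto-w(x_1,-x_2)$ across $\{x_2=0\}$, obtaining $\widetilde w$ that is $p$-harmonic on the whole ball $B_{1/2}$ (reflection preserves $p$-harmonicity and the vanishing boundary values make the gluing weakly admissible), with $\n\widetilde w\n_{L^\infty(B_{1/2})}=\n w\n_{L^\infty(B_{1/2}^+)}\le\n u\n_{L^\infty(B_1^+)}+\varepsilon\le2$. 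Theorem~\ref{quant}, after the trivial dilation to a unit ball, then gives $\widetilde w\in C^{1,\alpha_0}$ with $\n\widetilde w\n_{C^{1,\alpha_0}(B_{1/4})}\le C_p$, where $\alpha_0\in(\gamma,1)$ and $C_p$ depend only on $p$; in particular $u=w-\xi$ is $C^1$ near the origin up to $B_{1/2}'$ (alternatively this follows directly from Proposition~\ref{ci}), so that $u(0)$ and $\nabla u(0)$ are well defined.

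Next I would Taylor-expand $w=\widetilde w$ at the origin: for $x\in B_{\lambda_0}^+$ with $\lambda_0<\tfrac14$,
\[
|w(x)-w(0)-\nabla w(0)\cdot x|\le C_p\lambda_0^{1+\alpha_0},\qquad|\nabla w(x)-\nabla w(0)|\le C_p\lambda_0^{\alpha_0}.
\]
Since $u=w-\xi$ and the first-order Taylor polynomials of $u$ and $w$ at $0$ differ only by that of $\xi$, whose contribution on $B_{\lambda_0}^+$ is at most $|\xi(0)|+|\nabla\xi(0)|\lambda_0+|\xi(x)|\le3\varepsilon$ (respectively $|\nabla\xi(x)|+|\nabla\xi(0)|\le2\varepsilon$), I get on $B_{\lambda_0}^+$
\[
|u(x)-u(0)-\nabla u(0)\cdot x|\le C_p\lambda_0^{1+\alpha_0}+3\varepsilon,\qquad|\nabla u(x)-\nabla u(0)|\le C_p\lambda_0^{\alpha_0}+2\varepsilon.
\]
To conclude I would pick $\lambda_0\in(0,\tfrac14)$ small enough that $C_p\lambda_0^{\alpha_0-\gamma}\le\tfrac12$ (possible since $\alpha_0>\gamma$), which forces $C_p\lambda_0^{1+\alpha_0}\le\tfrac12\lambda_0^{1+\gamma}$ and $C_p\lambda_0^{\alpha_0}\le\tfrac12\lambda_0^{\gamma}$; then set $\varepsilon:=\tfrac16\lambda_0^{1+\gamma}$, so that (using $\lambda_0<1$) also $3\varepsilon\le\tfrac12\lambda_0^{1+\gamma}$ and $2\varepsilon\le\tfrac12\lambda_0^{\gamma}$; and finally take $\delta_0:=\delta(p,\varepsilon,\lambda)$ from Lemma~\ref{dc0}. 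With these choices the two right-hand sides above are bounded by $\lambda_0^{1+\gamma}$ and $\lambda_0^{\gamma}$, which is exactly the claim.

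I do not expect a serious obstacle here, since the scheme is routine once Lemma~\ref{dc0} and Theorem~\ref{quant} are available; the two points that need care are (i) checking that the odd reflection genuinely preserves $p$-harmonicity in the weak sense, which is precisely what permits invoking the \emph{quantitative} planar estimate of Theorem~\ref{quant} (with its control purely by the sup norm) rather than a bare interior regularity statement, and (ii) respecting the chain of dependencies: $\lambda_0$ must be fixed using only $C_p$ and $\alpha_0$ (hence only $p,\gamma$), then $\varepsilon$ from $\lambda_0$, and only afterwards $\delta_0$ through Lemma~\ref{dc0} — any circularity at this point would break the iteration of this lemma used later in the proof of Theorem~\ref{dC}.
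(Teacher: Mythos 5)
Your proposal is correct and follows essentially the same route as the paper: apply the corrector Lemma~\ref{dc0} with $\varepsilon$ to be fixed, oddly reflect $u+\xi$ across $\{x_2=0\}$ to get a planar $p$-harmonic function, invoke the quantitative estimate of Theorem~\ref{quant} with the gain $\frac{1}{p-1}+\tau_0>\gamma$, and then fix $\lambda_0$ first (using only $C_p$, $p$, $\gamma$), then $\varepsilon=\lambda_0^{1+\gamma}/6$, and finally $\delta_0$ from Lemma~\ref{dc0}. The only differences are cosmetic (you restrict to $\lambda_0<1/4$ to account for the dilation of the reflected ball, and you absorb the $(1+\varepsilon)$ factor into $C_p$), so no further changes are needed.
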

\begin{proof}
Let $\varepsilon>0$ which will be fixed later. Then previous Lemma will give a $\delta_0$ so that if $\n \mathbf{h}\n_{C^\alpha(\overline{B^+})}\leq\delta_0$, $\n f\n_{L^\infty(B_1^{+})}\leq\delta_0$, $\n g\n_{C^{1,\gamma}(B_1^{+})}\leq\delta_0$, $\n A-I\n_{C^\gamma(B_1^{+})}\leq\delta_0$ and $u\in W^{1,p}(B_1^{+})$ is a weak solution to $$\pde=-\ div\ \mathbf{h}+f\text{ in }B_{1}^{+},\ u=g\text{ on }B_{1}',$$ with $\n u\n_{L^\infty(B_1^{+})}\leq1$, we have a $\xi\in C^1(\overline{B_{1/2}^+})$, with $\n\xi\n_{C^1(\overline{B_{1/2}^+})}\leq\varepsilon$ such that $u+\xi\in\mathcal{A}(p,d)$ (after extending $\xi$ to full of $B_1^+\cup B_{1}'$ with $\n\xi\n_{C^1(B_{1}^+\cup B_{1}')}\leq\varepsilon$ followed by an odd reflection of $u+\xi$ to the bottom half of $B_1$). Then for $x\in B_{\lambda_0}^+$ with $\lambda_0\in(0,1/2)$, using Theorem \ref{quant}, we have\begin{align*}
|u(x)-[u(0)+\nabla u(0)\cdot x]|\leq&|(u+\xi)(x)-[(u+\xi)(0)+\nabla (u+\xi)(0)\cdot x]|\\
&+|\xi(x)|+|\xi(0)|+|\nabla\xi(0)\cdot x|\\
\leq&C_p(1+\varepsilon)\lambda_0^{p'+\tau_0}+3\varepsilon.
\end{align*} 
Also 
\begin{align*}
|\nabla u(x)-\nabla u(0)|&\leq |\nabla (u+\xi)(x)-\nabla (u+\xi)(0)|+|\nabla\xi(x)|+|\nabla\xi(0)|\\
&\leq C_p(1+\varepsilon)\lambda_0^{p'-1+\tau_0}+2\varepsilon.
\end{align*}
We choose $\lambda_0\in(0,1/2)$ small enough, to ensure $$C_p(1+1)\lambda_0^{p'+\tau_0}\leq\frac{1}{2}\lambda_0^{1+\gamma}$$ and we fix $\varepsilon=\lambda_0^{1+\gamma}/6$. This completes the proof.
\end{proof}
Let us fix the $\lambda_0$ as in Lemma \ref{dc}. Then we have the following corollary as a consequence of Lemma \ref{dc} and triangular inequality.
\begin{corollary}\label{dc5}
There exists a $\delta_0>0$, such that if $\n\mathbf{h}\n_{C^\alpha(\overline{B^+})}\leq\delta_0, \n f\n_{L^q(B_1^{+})}\leq \delta_0, \n g\n_{C^{1,\gamma}(B_1')}\leq\delta_0$ and $\n A-I\n_{C^\gamma(B_1^{+})}\leq\delta_0$ with $A\in\mathcal{M}_\lambda$ and $u\in W^{1,p}(B_1^+)$ is a weak solution to $$\pde=-\ div\ \mathbf{h}+f\text{ in }B_{1}^{+},\ u=g\text{ on }B_{1}',$$ with $\n u\n_{L^\infty(B_1^+)}\leq1$, then
$$\sup_{x\in B_{\lambda_0}^+}|u(x)-u(0)|\leq\lambda_0^{1+\gamma}+|\nabla u(0)|\lambda_0.$$
\end{corollary}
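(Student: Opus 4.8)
\textbf{Proof proposal for Corollary \ref{dc5}.}

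The plan is to derive this directly from the two estimates established in Lemma \ref{dc} by a single application of the triangle inequality, exactly as the statement of the corollary advertises. First I would take an arbitrary point $x \in B_{\lambda_0}^+$ and insert the affine function $u(0) + \nabla u(0)\cdot x$ as an intermediate term, writing
\begin{align*}
|u(x) - u(0)| &\leq |u(x) - [u(0) + \nabla u(0)\cdot x]| + |\nabla u(0)\cdot x|.
\end{align*}
The first term on the right is bounded by $\lambda_0^{1+\gamma}$ by the first conclusion of Lemma \ref{dc}, provided $u$ satisfies all the smallness hypotheses listed there; since the hypotheses of the corollary are identical (with the same $\delta_0$ and the same $\lambda_0$ that was fixed after Lemma \ref{dc}), this applies verbatim.

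For the second term, I would estimate $|\nabla u(0)\cdot x| \leq |\nabla u(0)|\,|x| \leq |\nabla u(0)|\,\lambda_0$ by Cauchy--Schwarz together with $|x| < \lambda_0$ for $x \in B_{\lambda_0}^+$. Combining the two bounds and taking the supremum over $x \in B_{\lambda_0}^+$ yields
\[
\sup_{x \in B_{\lambda_0}^+} |u(x) - u(0)| \leq \lambda_0^{1+\gamma} + |\nabla u(0)|\,\lambda_0,
\]
which is the claim. The value of $\delta_0$ is simply the one produced by Lemma \ref{dc}, and $\lambda_0$ is the one fixed immediately before the statement of the corollary, so no new parameters need to be chosen.

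There is really no substantial obstacle here: the corollary is a bookkeeping consequence of Lemma \ref{dc}, recorded separately because the quantity $\sup |u(x) - u(0)|$ (rather than the oscillation of $u$ around its first-order Taylor polynomial) is the one that feeds cleanly into the rescaling and iteration carried out in the subsequent proofs. The only point requiring a word of care is making sure the hypotheses are invoked with the \emph{same} $\lambda_0$ and $\delta_0$ as in Lemma \ref{dc} — in particular that $\lambda_0 \in (0,1/2)$ so that $B_{\lambda_0}^+ \subset B_{1/2}^+$ and the conclusions of Lemma \ref{dc} are in force on the relevant ball.
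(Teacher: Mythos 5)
Your proposal is correct and is precisely the argument the paper has in mind: the text introduces the corollary as a ``consequence of Lemma \ref{dc} and triangular inequality,'' and your decomposition $|u(x)-u(0)|\leq|u(x)-[u(0)+\nabla u(0)\cdot x]|+|\nabla u(0)\cdot x|$ together with the two bounds from Lemma \ref{dc} and $|x|<\lambda_0$ is exactly that. Nothing further is needed.
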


\begin{theorem}\label{dC}
There exist a constant $C$ and $\delta_0>0$ such that if $A\in\mathcal{M}_\lambda$ and $u\in W^{1,p}(B_1^{+})$ solves $$\pde=-\ div\ \mathbf{h}+f\text{ in }B_{1}^{+},\ u=g\text{ on }B_{1}',$$ with $\n u\n_{L^\infty(B_1^+)}\leq1$, $\n \mathbf{h}\n_{C^\Gamma(\overline{B^+})}\leq\delta_0$, $\n f\n_{L^q(B_1^{+})}\leq \delta_0$, $\n g\n_{C^{1,\gamma}(B_1^{'})}\leq\delta_0$ and $\n A-I\n_{C^\gamma(B_1^{+})}\leq\delta_0$, then for all $0<r\leq1$
$$\sup_{x\in B_r^+}|u(x)-u(0)|\leq Cr^{1+\gamma}\big(1+|\nabla u(0)|r^{-\gamma}\big),\ \ \sup_{x\in B_r^+}|\nabla u(x)-\nabla u(0)|\leq Cr^\gamma\big(1+|\nabla u(0)|r^{-\gamma}\big).$$
\end{theorem}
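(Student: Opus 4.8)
The plan is to upgrade the single-scale decay in Lemma \ref{dc} (equivalently Corollary \ref{dc5}) into a decay that holds at every scale $r\in(0,1]$, by iterating at the dyadic scales $r=\lambda_0^k$ and then filling in the intermediate radii. The key point is that the hypotheses of Lemma \ref{dc} are stable under the rescaling discussed after Lemma \ref{dc0}: if $u$ solves the equation on $B_1^+$ with the stated smallness, then for each $k\geq 0$ the rescaled function
\[
u_k(x)=\frac{u(\lambda_0^k x)-u(0)-\nabla u(0)\cdot(\lambda_0^k x)}{\lambda_0^{k(1+\gamma)}}
\]
(appropriately normalized) again solves an equation of the same type on $B_1^+$, with coefficient matrix $B_k(x)=A(\lambda_0^k x)\in\mathcal{M}_\lambda$ satisfying $\n B_k-I\n_{C^\gamma}\leq \lambda_0^{k\gamma}\delta_0\leq\delta_0$, and with right-hand side data $\mathbf{h}_k,f_k,g_k$ whose norms are controlled by $\lambda_0^{k}$ (or better) times the original norms — here one uses that $\Gamma=1-2/q$ and $\gamma=\Gamma/(p-1)$ are chosen precisely so that the $C^\Gamma$ scaling of $\mathbf{h}$, the $L^q$ scaling of $f$, and the $C^{1,\gamma}$ scaling of $g$ all beat the factor $\lambda_0^{k(1+\gamma)(p-1)}$ coming from the normalization. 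The only subtlety is that $u_k$ a priori has $L^\infty$-norm bounded in terms of $|\nabla u(0)|$ rather than by $1$, so one must track the factor $\big(1+|\nabla u(0)|\lambda_0^{-k\gamma}\big)$ through the iteration, which is exactly the quantity appearing on the right-hand side of the theorem.

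Concretely, I would argue by induction on $k$ that
\[
\sup_{x\in B_{\lambda_0^k}^+}\big|u(x)-u(0)-\nabla u(0)\cdot x\big|\leq C\lambda_0^{k(1+\gamma)}\big(1+|\nabla u(0)|\lambda_0^{-k\gamma}\big),
\qquad
\sup_{x\in B_{\lambda_0^k}^+}\big|\nabla u(x)-\nabla u(0)\big|\leq C\lambda_0^{k\gamma}\big(1+|\nabla u(0)|\lambda_0^{-k\gamma}\big),
\]
with $k=0$ being the a priori $C^{1,\beta}$ bound of Proposition \ref{ci} (after the preliminary rescaling that makes $|\nabla v|\leq 1$). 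For the inductive step, one restricts $u$ to $B_{\lambda_0^k}^+$, rescales to $B_1^+$, checks the smallness hypotheses as above, applies Lemma \ref{dc} to gain one more factor $\lambda_0^{1+\gamma}$ in the first estimate and $\lambda_0^\gamma$ in the second, and rescales back, absorbing the accumulated geometric series into the constant $C$ (this is where the constant $\lambda_0<1/2$ and the exponent gain $p'+\tau_0>1+\gamma$ hidden inside Lemma \ref{dc} matter — they guarantee the series converges). Finally, for an arbitrary $r\in(0,1]$ one picks $k$ with $\lambda_0^{k+1}<r\leq\lambda_0^k$ and uses $B_r^+\subset B_{\lambda_0^k}^+$ together with $\lambda_0^k\leq r/\lambda_0$, which only costs a further factor $\lambda_0^{-(1+\gamma)}$ in $C$; the form $r^{1+\gamma}(1+|\nabla u(0)|r^{-\gamma})$ is then immediate, and the triangle inequality $|u(x)-u(0)|\leq |u(x)-u(0)-\nabla u(0)\cdot x|+|\nabla u(0)||x|$ converts the first estimate to the stated one (the extra $|\nabla u(0)|r$ term being dominated by $Cr^{1+\gamma}|\nabla u(0)|r^{-\gamma}$ when $\gamma\leq 1$, or absorbed otherwise).

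The main obstacle, and the place requiring genuine care rather than bookkeeping, is the propagation of smallness of the Dirichlet datum through the iteration: after subtracting the linear part $u(0)+\nabla u(0)\cdot x$ and rescaling, the new boundary datum $g_k$ is not simply a rescaling of $g$ but also picks up the trace of the corrector and the error between $u$ and its linearization on the previous scale, so one must verify that $\n g_k\n_{C^{1,\gamma}(B_1')}$ stays below $\delta_0$ uniformly in $k$. This is handled by noting that the inductive hypothesis controls precisely $\sup|\nabla u - \nabla u(0)|$ on $B_{\lambda_0^k}^+$ and hence on $B_{\lambda_0^k}'$, so $g_k$ — being the restriction of the rescaled $(u-u(0)-\nabla u(0)\cdot x)/\lambda_0^{k(1+\gamma)}$ to $\{x_2=0\}$ — has $C^{1,\gamma}$ norm bounded by a fixed multiple of the constant in the inductive hypothesis, and one chooses $\delta_0$ small (and $C$ via the convergent series) so that this stays $\leq\delta_0$. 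A secondary technical point is the odd reflection across $B_1'$ used implicitly when invoking Theorem \ref{quant} on the corrected function, but since $u+\xi$ vanishes on $B_{1/2}'$ this reflection is legitimate and was already used in the proof of Lemma \ref{dc}.
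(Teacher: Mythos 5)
There is a genuine gap, and it is structural. Your rescaled function
\[
u_k(x)=\frac{u(\lambda_0^k x)-u(0)-\nabla u(0)\cdot(\lambda_0^k x)}{\lambda_0^{k(1+\gamma)}}
\]
subtracts the affine part $\nabla u(0)\cdot x$, but the operator $v\mapsto \operatorname{div}\big(\langle A\nabla v,\nabla v\rangle^{\frac{p-2}{2}}A\nabla v\big)$ is not invariant under subtraction of an affine function (unlike the linear case $p=2$). If you set $v=u-\nabla u(0)\cdot x$, then $v$ solves $\operatorname{div}\big(\langle A(\nabla v+c),\nabla v+c\rangle^{\frac{p-2}{2}}A(\nabla v+c)\big)=\dots$ with $c=\nabla u(0)$, which is not of the form required by Lemma~\ref{dc}/Corollary~\ref{dc5}; so you cannot feed $u_k$ back into the single-scale corrector step. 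For the same reason, the normalization $\lambda_0^{k(1+\gamma)}$ does not give $\|u_k\|_{L^\infty}\le 1$, which Lemma~\ref{dc} also needs. This is precisely why the paper does \emph{not} subtract the affine part in Theorem~\ref{dC}: the rescaling used there is $w(x)=\big(u(\lambda_0^k x)-u(0)\big)/a_k$ (constant subtraction only), with $a_k$ defined by the recursion $a_{k+1}=\lambda_0^{1+\gamma}a_k+\frac{1}{\delta}|\nabla u(0)|\lambda_0^{k+1}$, so $\|w\|_{L^\infty}\le 1$ holds by induction while the $|\nabla u(0)|$-dependence is bookkept in $a_k$ rather than removed.

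You correctly identified the propagation of smallness of the Dirichlet datum as the delicate point, but your proposed remedy (``choose $\delta_0$ small and use a convergent series'') is circular: $\delta_0$ is a universal constant that must be fixed before $u$ and $g$ are given. The paper's resolution is different and essential: the extra summand $\frac{1}{\delta}|\nabla u(0)|\lambda_0^{k+1}$ in the definition of $a_k$ makes the denominator of the rescaled datum $g_1=(g(\lambda_0^k\cdot)-g(0))/a_k$ large enough that $|g_1|\le 2\delta$ holds at every step, via the two-case comparison of $|\nabla g(0)|\lambda_0^k$ against $\delta\lambda_0^{(1+\gamma)k}$ (using $|\nabla g(0)|\le|\nabla u(0)|$ on $B_1'$). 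Without this device, the iteration does not close. The statement you are really aiming at (the affine-subtracted Taylor estimate) is Theorem~\ref{dA}, and the paper proves it not by iteration but by splitting into the regimes $|\nabla u(0)|\le r^\gamma$ and $|\nabla u(0)|>r^\gamma$, applying Theorem~\ref{dC} in the first and the non-degenerate quadratic theory (Theorem~\ref{d0}) in the second once $|\nabla v|>1/2$ is secured; conflating the two theorems is where your argument goes off the rails.
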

\begin{proof}
Let $\delta=\frac{\delta_0}{2}$, with $\delta_0$ as in Corollary \ref{dc5}. Assume $\n \mathbf{h}\n_{C^\Gamma(\overline{B^+})}\leq\delta$, $\n f\n_{L^\infty(B_1^{+})}\leq\delta$, $\n g\n_{C^{1+\gamma}(B_1^{+})}\leq\delta$, $\n A-I\n_{C^\gamma(B_1^{+})}\leq\delta$. Without loss of generality assume $\delta\leq1$. Set $a_0=1$ and for $k=0,1,2,3,\cdots$ $$a_{k+1}=\lambda_0^{1+\gamma}a_k+\frac{1}{\delta}|\nabla u(0)|\lambda_0^{k+1}.$$
Then we claim that $$\sup_{x\in B_{\lambda_0^k}^+}|u(x)-u(0)|\leq a_k.$$  Our claim is true for $k=1$ by Corollary \ref{dc5}. Assume the claim is true for $k$.
Consider the function $w$ defined by $$w(x)=\frac{u(\lambda_0^kx)-u(0)}{a_k}, \text{ for }x\in B_1^{+}.$$ By induction, $\n w\n_{L^\infty(B_1^{+})}\leq1$ and $$-\ div\ (\langle A_1\nabla w,\nabla w\rangle^{\frac{p-2}{2}}A_1\nabla w)=-\ div\ \mathbf{h_1}+f_1\text{ in }B_{1}^{+},\ w=g_1\text{ on }B_{1}',$$ with $A_1=A(\lambda_0^k\ \cdot)$, $$\mathbf{h_1}=\frac{\lambda_0^{k(p-1)}}{a_k^{p-1}}\big(\mathbf{h}(\lambda_0^k\ \cdot)-\mathbf{h}(0)\big),\ \ f_1=\frac{\lambda_0^{kp}}{a_k^{p-1}}f(\lambda_0^k\ \cdot),\ \ g_1=\frac{g(\lambda_0^k\ \cdot)-g(0)}{a_k}.$$ Note that 
$A_1\in\mathcal{M}_\lambda$, $\n A_1-I\n_{C^\gamma(B_1)}\leq\delta_0$ and \begin{align*}
\n f_1\n_{\li}=\frac{\lambda_0^{k(p-2/q)}}{a_k^{p-1}}\n f\n_{L^q(B_{\lambda_0^k}^+)}\leq\n f\n_{L^q(B_{\lambda_0^k}^+)}\leq\delta_0.
\end{align*}
For $x,y\in B_1^+$, $$|\mathbf{h_1}(x)-\mathbf{h_1}(y)|\leq\frac{\lambda_0^{k(p-1)}}{a_k^{p-1}}\n \mathbf{h}\n_{C^{\Gamma}(B_1^+)}\lambda_0^{k\Gamma}|x-y|^\Gamma\leq\delta_0|x-y|^\Gamma.$$
Also $$|g_1(x')|\leq\frac{|\nabla g(0)|\lambda_0^k+\delta\lambda_0^{(1+\gamma)k}}{\lambda_0^{1+\gamma}a_{k-1}+\frac{1}{\delta}|\nabla u(0)|\lambda_0^k}.$$ If $|\nabla g(0)|\lambda_0^k\leq\delta\lambda_0^{(1+\gamma)k}$ then$$|g_1(x')|\leq\frac{2\delta\lambda_0^{(1+\gamma)k}}{\lambda_0^{(1+\gamma)k}}=2\delta.$$
If $\delta\lambda_0^{(1+\gamma)k}\leq|\nabla g(0)|\lambda_0^k$ then$$|g_1(x')|\leq\frac{2|\nabla g(0)|\lambda_0^k}{\frac{1}{\delta}|\nabla u(0)|\lambda_0^k}=2\delta.$$
Now, for $x',y'\in B_1'$, $$|\nabla g_1(x')-\nabla g_1(y')|\leq\frac{\lambda_0^{(1+\gamma)k}}{a_k}\n g\n |x'-y'|^\gamma\leq\delta|x'-y'|^\gamma.$$
Using Corollary \ref{dc5} $$\sup_{x\in B_{\lambda_0}^+}|w(x)-w(0)|\leq\lambda_0^{1+\gamma}+|\nabla w(0)|\lambda_0$$ which gives$$\sup_{x\in B_{\lambda_0}^+}\bigg|\frac{u(\lambda_0^kx)-u(0)}{a_k}\bigg|\leq\lambda_0^{1+\gamma}+\bigg|\frac{\lambda_0^k}{a_k}\nabla u(0)\bigg|\lambda_0,$$ consequently
$$\sup_{x\in B_{\lambda_0^{k+1}}^+}|u(x)-u(0)|\leq\lambda_0^{1+\gamma}a_k+|\nabla u(0)|\lambda_0^{k+1}\leq\lambda_0^{1+\gamma}a_k+\frac{1}{\delta}|\nabla u(0)|\lambda_0^{k+1}=a_{k+1}.$$
Thus our claim is established.\\
Also $$\sup_{x\in B_{\lambda_0}^+}|\nabla w(x)-\nabla w(0)|\leq\lambda_0^\gamma\Rightarrow\sup_{x\in B_{\lambda_0^{k+1}}^+}|\nabla u(x)-\nabla u(0)|\leq \frac{a_k}{\lambda_0^k}\lambda_0^\gamma.$$
For $0<r\leq\lambda_0$, choose $k$ such that $\lambda_0^{k+1}<r\leq\lambda_0^k$. Set $$b_k=\frac{a_k}{\lambda_0^{k(1+\gamma)}}.$$Note that $$b_{k+1}=\frac{a_{k+1}}{\lambda_0^{(k+1)(1+\gamma)}}=\frac{\lambda_0^{1+\gamma}a_k+\frac{1}{\delta}|\nabla u(0)|\lambda_0^{k+1}}{\lambda_0^{(k+1)(1+\gamma)}}=b_k+\frac{1}{\delta}|\nabla u(0)|\lambda_0^{-(k+1)\gamma}.$$
Then 
\begin{align*}
\sup_{x\in B_r^+}\frac{|u(x)-u(0)|}{r^{1+\gamma}}\leq&\ \sup_{x\in B_{\lambda_0^k}^+}\frac{|u(x)-u(0)|}{\lambda_0^{(k+1)(1+\gamma)}}\leq\frac{b_k}{\lambda_0^{1+\gamma}}\\
\leq&\ \dfrac{b_0+\frac{1}{\delta}|\nabla u(0)|\sum_{i=1}^k\lambda_0^{-\gamma i}}{\lambda_0^{1+\gamma}}\\
=&\ \dfrac{1+\frac{1}{\delta}|\nabla u(0)|\lambda_0^{-\gamma }\frac{\lambda_0^{-\gamma k}-1}{\lambda_0^{-\gamma}-1}}{\lambda_0^{1+\gamma}}, \text{ as }b_0=1\\
\leq&\ \lambda_0^{-1-\gamma}+\frac{1}{\delta}\frac{|\nabla u(0)|}{\lambda_0^{1+\gamma}(1-\lambda_0^\gamma)}\lambda_0^{-\gamma k}\\
\leq&\ \lambda_0^{-1-\gamma}+\frac{1}{\delta}\frac{|\nabla u(0)|}{\lambda_0^{1+\gamma}(1-\lambda_0^\gamma)}r^{-\gamma}.
\end{align*}
For $\lambda_0<r\leq1$ we see as $\n u\n_{L^\infty(B_1^+)}\leq1$, $$\sup_{x\in B_r^+}\frac{|u(x)-u(0)|}{r^{1+\gamma}}\leq\frac{2}{\lambda_0^{1+\gamma}}.$$
On the other hand 
\begin{align*}
\sup_{x\in B_r^+}\frac{|\nabla u(x)-\nabla u(0)|}{r^\gamma}&\leq\sup_{x\in B_{\lambda_0^k}^+}\frac{|\nabla u(x)-\nabla u(0)|}{\lambda_0^{(k+1)\gamma}}\leq\frac{a_{k-1}}{\lambda_0^{k-1}}\lambda_0^\gamma\frac{1}{\lambda_0^{(k+1)\gamma}}=\frac{a_{k-1}}{\lambda_0^{(k-1)(1+\gamma)}}\frac{1}{\lambda_0^\gamma}=\frac{b_{k-1}}{\lambda_0^\gamma}\\
&\leq\frac{1}{\lambda_0^\gamma}+\frac{1}{\delta}|\nabla u(0)|\frac{\lambda_0^{-k\gamma}}{1-\lambda_0^\gamma}\leq \frac{1}{\lambda_0^\gamma}+\frac{1}{\delta}|\nabla u(0)|\frac{r^{-\gamma}}{1-\lambda_0^\gamma}.
\end{align*}  
We choose $C=C(\lambda_0,\gamma)$ appropriately. Rename the $\delta$ to be $\delta_0$.
\end{proof}
\begin{theorem}\label{dA}
Let $A\in\mathcal{M}_\lambda$ and $u\in W^{1,p}(B_1^{+})$ with $\n u\n_{L^\infty(B_1^+)}\leq1$ solves 
\begin{equation}
\pde=-\ div\ \mathbf{h}+f\text{ in }B_{1}^{+},\ u=g\text{ on }B_{1}'.
\end{equation}
Then there exist constants $C$ and $\delta_0>0$ such that if 
\begin{equation}\label{dh}
\n \mathbf{h}\n_{C^\Gamma(\overline{B_1^+})}\leq\delta_0,\ \n f\n_{L^q(B_1^{+})}\leq \delta_0,\ \n g\n_{C^{1,\gamma}(B_1^{'})}\leq\delta_0,\ \n A-I\n_{C^\gamma(B_1^{+})}\leq\delta_0,
\end{equation}
one has for all $0<r\leq1$
\begin{equation*}
\sup_{x\in B_r^+}|u(x)-[u(0)+\nabla u(0)\cdot x]|\leq Cr^{1+\gamma},\ \ \ \sup_{x\in B_r^+}|\nabla u(x)-\nabla u(0)|\leq Cr^\gamma.
\end{equation*}
\end{theorem}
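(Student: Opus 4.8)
The plan is to deduce Theorem \ref{dA} from Theorem \ref{dC} by the standard trick of separating the ``pure oscillation'' control from the ``gradient'' control. The point is that in Theorem \ref{dC} the right-hand sides carry the bad factor $1+|\nabla u(0)|r^{-\gamma}$, which is harmless only if $|\nabla u(0)|$ is itself controlled; so the first task is to bound $|\nabla u(0)|$. This is done exactly as in the interior case of \cite{ATU}: applying Theorem \ref{dC} at scales $r=\lambda_0^k$ and telescoping, one sees that the sequence $c_k := \sup_{B_{\lambda_0^k}^+}|u(x)-u(0)|/\lambda_0^{k}$ stays bounded, which forces $|\nabla u(0)| \leq C$ for a constant $C=C(\lambda_0,\gamma,\delta_0)$. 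Alternatively, and more directly, one may simply invoke Proposition \ref{ci} (with $A$, $\mathbf{h}$, $f$, $g$ in the admissible classes by hypothesis \eqref{dh}), which already gives $\|u\|_{C^{1,\beta}(\overline{B_{3/4}^+})}\leq C$, hence $|\nabla u(0)|\leq C$; I would take this route since it is shortest.

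Once $|\nabla u(0)|\leq C$ is in hand, Theorem \ref{dC} immediately yields, for all $0<r\leq 1$,
\begin{equation*}
\sup_{x\in B_r^+}|u(x)-u(0)|\leq Cr^{1+\gamma}(1+Cr^{-\gamma})\leq C'\,r,\qquad \sup_{x\in B_r^+}|\nabla u(x)-\nabla u(0)|\leq C'\,r^\gamma,
\end{equation*}
where the second inequality is already the desired gradient estimate. For the first estimate of Theorem \ref{dA} — oscillation around the \emph{affine} function $u(0)+\nabla u(0)\cdot x$ rather than around the constant $u(0)$ — I would get it directly from the gradient bound just obtained: for $x\in B_r^+$, writing $u(x)-u(0)-\nabla u(0)\cdot x$ as an integral of $\nabla u(tx)-\nabla u(0)$ along the segment $[0,x]$ (valid since $u\in C^1(\overline{B_{3/4}^+})$ and the segment stays in $\overline{B_r^+}$ by convexity), one bounds the integrand by $\sup_{B_r^+}|\nabla u-\nabla u(0)|\leq C'r^\gamma$, and integrating in $t\in[0,1]$ against $|x|\leq r$ gives the bound $C'r\cdot r^\gamma = C'r^{1+\gamma}$. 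Renaming constants finishes the proof.

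The only mild subtlety — and the step I'd expect to need the most care — is the bound on $|\nabla u(0)|$: one must make sure the constant there depends only on the allowed data $(\lambda,p,q,\gamma,\delta_0)$ and not on $u$ itself, so that the final $C$ in Theorem \ref{dA} is genuinely universal. Using Proposition \ref{ci} handles this cleanly because its conclusion is exactly of this uniform form once \eqref{dh} places the data in the admissible classes (after possibly shrinking $\delta_0$ so that $\delta_0\leq L$ and $\mathbf{h}\in C^\Gamma\subset C^\alpha$ with $\alpha=\gamma$, using $\Gamma = (p-1)\gamma \geq \gamma$). The telescoping alternative works too but requires carrying the geometric-series bookkeeping of Theorem \ref{dC} one more time, so I'd avoid it. Everything else is routine: the rescaling invariance of the equation has already been recorded before Theorem \ref{dC}, and the passage from constant-oscillation to affine-oscillation is the elementary fundamental-theorem-of-calculus argument above.
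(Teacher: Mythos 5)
Your proposal has a genuine gap in the gradient estimate, and the gap is not a technicality — it is exactly the difficulty that Theorem~\ref{dA} is designed to overcome. Once you know $|\nabla u(0)|\leq K$ for some universal $K$, Theorem~\ref{dC} gives, for the gradient,
\[
\sup_{x\in B_r^+}|\nabla u(x)-\nabla u(0)|\;\leq\; Cr^\gamma\bigl(1+Kr^{-\gamma}\bigr)\;=\;Cr^\gamma + CK,
\]
which is bounded but is \emph{not} $O(r^\gamma)$ as $r\to 0$: the constant term $CK$ does not vanish. So the inequality you write, $\sup_{B_r^+}|\nabla u-\nabla u(0)|\leq C'r^\gamma$, does not follow. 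Bounding $|\nabla u(0)|$ by a universal constant is helpful only when $r\gtrsim 1$, but the whole content of the theorem is the behavior at small $r$, where $|\nabla u(0)|r^{-\gamma}$ may be large no matter how small $|\nabla u(0)|$ is. Since your affine estimate is derived from this gradient estimate by the FTC, it inherits the same failure. (The oscillation estimate $\sup_{B_r^+}|u-u(0)|\leq C'r$ you deduce is correct, but it is only Lipschitz control and does not give the $r^{1+\gamma}$ affine approximation.)

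The paper handles exactly this issue by the dichotomy $|\nabla u(0)|\leq r^\gamma$ versus $|\nabla u(0)|>r^\gamma$. In the degenerate case $|\nabla u(0)|\leq r^\gamma$ the factor $1+|\nabla u(0)|r^{-\gamma}$ is bounded by $2$, so Theorem~\ref{dC} plus the triangle inequality is enough — this part matches your reasoning. In the nondegenerate case $|\nabla u(0)|>r^\gamma$, one sets $\mu=|\nabla u(0)|^{1/\gamma}$ and rescales $v(x)=\bigl(u(\mu x)-u(0)\bigr)/\mu^{1+\gamma}$; then $|\nabla v(0)|=1$, Proposition~\ref{ci} gives $|\nabla v|\geq 1/2$ on a fixed half-ball $B_{\rho_0}^+$, and there the equation is uniformly elliptic, so the quadratic boundary Schauder-type theory (Theorem~\ref{d0}) yields the clean $C^{1,\gamma}$ bound for $v$, which transfers back to $u$ on scales $r\leq\mu\rho_0/2$ (the remaining annulus $\mu\rho_0/2\leq r<\mu$ is handled directly from Theorem~\ref{dC}). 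This nondegenerate branch — the appeal to the $p=2$ regularity theory where the gradient is bounded away from $0$ — is the crux of the proof and is entirely absent from your proposal.
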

\begin{proof}
If $|\nabla u(0)|\leq r^\gamma$, then we have from Theorem \ref{dC},
\begin{align*}
\sup_{x\in B_r^+}|u(x)-[u(0)+\nabla u(0)\cdot x]|\leq&\ \sup_{x\in B_r^+}|u(x)-u(0)|+|\nabla u(0)|r\\
\leq&\ (C+1)r^{1+\gamma}.
\end{align*}
Also similarly $$\sup_{x\in B_r^+}|\nabla u(x)-\nabla u(0)|\leq 2Cr^\gamma.$$
So let us assume $|\nabla u(0)|>r^\gamma$ and define $\mu=|\nabla u(0)|^{1/\gamma}$. Set for $x\in B_1^+$ $$v(x)=\frac{u(\mu x)-u(0)}{\mu^{1+\gamma}}.$$(Note that in order to define $v$ we need to have $\mu\leq1$ which can be assumed without loss generality as discussed before Lemma \ref{dc}.)  
From Theorem \ref{dC} one has $$\sup_{x\in B_1^+}|v(x)|=\sup_{x\in B_\mu^+}\frac{|u(x)-u(0)|}{\mu^{1+\gamma}}\leq C.$$
Now $v(0)=0$, $|\nabla v(0)|=1$ and $v$ satisfies
$$-\ div\ (\langle A_1\nabla v,\nabla v\rangle^{\frac{p-2}{2}}A_1\nabla v)=-\ div\ \mathbf{h_1}+f_1\text{ in }B_{1}^{+},\ v=g_1\text{ on }B_{1}',$$ with $A_1\in \mathcal{M}_\lambda$, $$\mathbf{h_1}=\frac{\mu^{p-1}}{\mu^{(1+\gamma)(p-1)}}\big(\mathbf{h}(\mu\ \cdot)-\mathbf{h}(0)\big),\ \ f_1=\frac{\mu^{p}}{\mu^{(1+\gamma)(p-1)}}f(\mu\ \cdot),\ \ g_1=\frac{g(\mu\ \cdot)-g(0)}{\mu^{1+\gamma}}.$$ Note that 
 $\n A_1-I\n_{C^\gamma(B_1)}\leq\delta_0$, $\n \mathbf{h_1}\n_{C^\Gamma(\overline{B_1^+})}\leq\delta_0$, $\n f_1\n_{L^q(B_1^{+})}\leq \delta_0$ and $\n g_1\n_{C^{1,\gamma}(B_1^{'})}\leq2\delta_0$. 
Applying $C^{1,\beta}$ estimates, see Proposition \ref{ci}, we have a $\rho_0$ independent of $w$ such that $$|\nabla v(x)|>\frac{1}{2}\text{ for all }x\in B_{\rho_0}^+.$$
At this point we apply the result from quadratic theory i.e. $p=2$ case, see Theorem \ref{d0} in section \ref{sd}. Consider the PDE 
$$-\ div\ \mathbf{A}(x,\nabla v)=-\ div\ \mathbf{h_1}+f_1\text{ in }B_{\rho_0}^{+},\ v=g_1\text{ on }B_{\rho_0}',$$
with $$\mathbf{A}(x,\zeta)=\langle A(x)\zeta,\zeta\rangle^{\frac{p-2}{2}}A(x)\zeta\text{ for }x\in B_{\rho_0}^+,\ \frac{1}{2}\le|\zeta|\leq1$$ but $\mathbf{A}$ is continuously differentiable in $\zeta$ variable, $\partial_{\zeta_j}\mathbf{A}^i(x,\zeta)\xi_i\xi_j\geq\lambda'|\xi|^2$, $|\partial_{\zeta_j}\mathbf{A}^i(x,\zeta)|\leq L$, $|\mathbf{A}^i(x,\zeta)-\mathbf{A}^i(y,\zeta)|\leq L(1+|\zeta|)|x-y|^\gamma$. Then there exists a $C=C(\rho_0,\gamma)\geq0$ independent of $u$ so that $$\sup_{x\in B_r^+}|v(x)-[v(0)+\nabla v(0)\cdot x]|\leq Cr^{1+\gamma}$$ i.e. $$\sup_{x\in B_r^+}|u(\mu x)-u(0)-\nabla u(0)\cdot \mu x|\leq C(\mu r)^{1+\gamma}$$for all $0<r\leq\rho_0/2$. In other words $$\sup_{x\in B_r^+}|u(x)-u(0)-\nabla u(0)\cdot x|\leq Cr^{1+\gamma}$$ for all $0<r\leq\mu\rho_0/2$. Similarly $$\sup_{x\in B_r^+}|\nabla u(x)-\nabla u(0)|\leq Cr^\gamma$$ for all $0<r\leq\mu\rho_0/2$.
For $\mu\rho_0/2\leq r<\mu$, 
\begin{align*}
\sup_{x\in B_r^+}|u(x)-u(0)-\nabla u(0)\cdot x|&\leq \sup_{x\in B_\mu^+}|u(x)-u(0)-\nabla u(0)\cdot x|\\
&\leq C\mu^{1+\gamma}(1+|\nabla u(0)|\mu^{-\gamma})+|\nabla u(0)|\mu\\
&=C\mu^{1+\gamma}+(C+1)\mu^{1+\gamma}=(2C+1)\mu^{1+\gamma}\\
&\leq (2C+1)(2/\rho_0)^{1+\gamma}r^{1+\gamma}
\end{align*} and also similarly $$\sup_{x\in B_r^+}|\nabla u(x)-\nabla u(0)|\leq 2C(2/\rho_0)^\gamma r^\gamma.$$ This completes the proof.
\end{proof}
Note that we have the following result for interior case which can be proved more easily than Theorem \ref{dA} due to absence of Dirichlet data. We mention that, this result (conclusion \eqref{d02}) in the case of $\mathbf{h}=0$ was proved by Araujo and Zhang in \cite{az}.
\begin{theorem}\label{dB}
There exist a constant $C$ and $\delta_0>0$ such that if $A\in\mathcal{O}_\lambda$ (see \eqref{0}) and $u\in W^{1,p}(B_1)$ solves $$\pde=-\ div\ \mathbf{h}+f\text{ in }B_{1},$$ with $\n u\n_{L^\infty(B_1)}\leq1$, $\n \mathbf{h}\n_{C^\Gamma(B_1)}\leq\delta_0$, $\n f\n_{L^q(B_1)}\leq \delta_0$ and $\n A-I\n_{C^\gamma(B_1)}\leq\delta_0$, then for all $0<r\leq1$,
\begin{equation}\label{d02}
\sup_{x\in B_r}|u(x)-[u(0)+\nabla u(0)\cdot x]|\leq Cr^{1+\gamma},
\end{equation}$$\sup_{x\in B_r^+}|\nabla u(x)-\nabla u(0)|\leq Cr^\gamma.$$
\end{theorem}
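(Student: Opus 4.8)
The plan is to mimic, in the interior setting, the argument used for Theorem \ref{dA}, which here is somewhat simpler because there is no boundary datum that must be kept small along the iteration (alternatively, for the special case $\mathbf h=0$ one may invoke directly the result of Araujo and Zhang in \cite{az}, but the scheme below also handles the divergence term). The four ingredients are: an interior small-corrector lemma, a one-step geometric improvement at a fixed scale $\lambda_0$, the iteration of that improvement, and finally a case split that removes the spurious factor $1+|\nabla u(0)|r^{-\gamma}$ by a blow-up reducing to the quadratic (i.e.\ $p=2$) theory of Sections 4--5.

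First I would prove the interior analogue of Lemma \ref{dc0}: for every $\varepsilon>0$ there is $\delta=\delta(p,\varepsilon,\lambda)>0$ so that if $A\in\mathcal O_\lambda$ and $u\in W^{1,p}(B_1)$ solves the equation with $\n u\n_{L^\infty(B_1)}\leq1$ and $\n \mathbf h\n_{C^\Gamma(B_1)},\ \n f\n_{L^q(B_1)},\ \n A-I\n_{C^\gamma(B_1)}$ all at most $\delta$, then there is $\xi\in C^1(\overline{B_{1/2}})$ with $\n\xi\n_{C^1(\overline{B_{1/2}})}\leq\varepsilon$ and $\Delta_p(u+\xi)=0$ in $B_{1/2}$. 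This is the standard compactness/contradiction argument: take contradicting sequences, use the interior $C^{1,\beta}$ estimates of DiBenedetto and Tolksdorf (\cite{Di},\cite{Tk}, applicable since $q>2=n$) together with the uniform smallness of the data to extract a $C^1_{\mathrm{loc}}$-convergent subsequence whose limit $u_\infty$ is $p$-harmonic, and set $\xi_j=u_\infty-u_j$. Combining this with the quantitative planar estimate Theorem \ref{quant} and the inequality $p'+\tau_0=1+\frac{1}{p-1}+\tau_0>1+\frac{1}{p-1}>1+\gamma$ (valid since $\gamma=\frac{1-2/q}{p-1}<\frac{1}{p-1}$), one obtains, exactly as in Lemma \ref{dc} and Corollary \ref{dc5}, constants $\lambda_0\in(0,1/2)$ and $\delta_0>0$ for which the smallness hypotheses imply $\sup_{B_{\lambda_0}}|u-u(0)|\leq\lambda_0^{1+\gamma}+|\nabla u(0)|\lambda_0$ and $\sup_{B_{\lambda_0}}|\nabla u-\nabla u(0)|\leq\lambda_0^\gamma$.

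Then I would iterate on the balls $B_{\lambda_0^k}$ following verbatim the proof of Theorem \ref{dC}: with $a_0=1$ and $a_{k+1}=\lambda_0^{1+\gamma}a_k+\frac{1}{\delta}|\nabla u(0)|\lambda_0^{k+1}$, one rescales $w(x)=\frac{u(\lambda_0^kx)-u(0)}{a_k}$ and checks inductively that the rescaled coefficients, $\mathbf h_1$ and $f_1$ stay within the smallness thresholds; the key point, identical to the computation preceding Lemma \ref{dc}, is that the exponents $\gamma$ and $\Gamma$ are tuned precisely so that $\n f_1\n_{L^q}\leq\lambda_0^{k(p-2/q)}a_k^{-(p-1)}\n f\n_{L^q(B_{\lambda_0^k})}\leq\delta_0$ and similarly for $\mathbf h_1$. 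This produces, for all $0<r\leq1$, the estimates $\sup_{B_r}|u-u(0)|\leq Cr^{1+\gamma}(1+|\nabla u(0)|r^{-\gamma})$ and $\sup_{B_r}|\nabla u-\nabla u(0)|\leq Cr^\gamma(1+|\nabla u(0)|r^{-\gamma})$. Finally I split into cases: if $|\nabla u(0)|\leq r^\gamma$, the conclusion follows at once from these two bounds and the triangle inequality; if $|\nabla u(0)|>r^\gamma$, put $\mu=|\nabla u(0)|^{1/\gamma}$ (which may be assumed $\leq1$ as in the proof of Theorem \ref{dA}) and $v(x)=\frac{u(\mu x)-u(0)}{\mu^{1+\gamma}}$, so that $v(0)=0$, $|\nabla v(0)|=1$ and $\n v\n_{L^\infty(B_1)}\leq C$. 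By the interior $C^{1,\beta}$ estimate there is a fixed $\rho_0>0$ with $|\nabla v|>\frac12$ on $B_{\rho_0}$, hence on $B_{\rho_0}$ the operator is uniformly elliptic and non-degenerate; invoking there the Giaquinta--Giusti type $C^{1,\gamma}$ result for quadratic equations proved in Sections 4--5 (the interior counterpart of Theorem \ref{d0}) gives $\sup_{B_r}|v-[v(0)+\nabla v(0)\cdot x]|\leq Cr^{1+\gamma}$ and $\sup_{B_r}|\nabla v-\nabla v(0)|\leq Cr^\gamma$ for $r\leq\rho_0/2$; scaling back to $u$ yields the claim for $0<r\leq\mu\rho_0/2$, and the remaining range $\mu\rho_0/2\leq r\leq1$ is disposed of by the crude bounds above, exactly as at the end of the proof of Theorem \ref{dA}.

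I expect the main obstacle to be this last reduction to the quadratic theory: one must check that the frozen nonlinearity $\mathbf A(x,\zeta)=\langle A(x)\zeta,\zeta\rangle^{(p-2)/2}A(x)\zeta$, restricted to $\frac12\leq|\zeta|\leq1$, extends to a vector field that is continuously differentiable in $\zeta$, uniformly elliptic, and $C^\gamma$ in $x$, with all constants independent of $u$, so that the interior Giaquinta--Giusti estimate applies with uniform constants; tracking the single smallness constant $\delta_0$ through the two rescalings (the one defining $w$ and the one defining $v$) is routine but must be done so that $\delta_0$ is fixed once and for all, independently of $r$ and $u$.
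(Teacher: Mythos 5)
Your proposal is correct and follows essentially the approach the paper intends: the paper does not write out a separate proof of Theorem \ref{dB}, remarking only that it follows more easily than Theorem \ref{dA} because there is no Dirichlet datum to keep small along the iteration, and citing \cite{az} for the $\mathbf{h}=0$ case. Your four-ingredient scheme (interior corrector lemma, one-step improvement via Theorem \ref{quant}, dyadic iteration with the $a_k$ recursion, and blow-up to the quadratic Giaquinta--Giusti theory when $|\nabla u(0)|>r^\gamma$) is exactly the interior analogue of the argument running from Lemma \ref{dc0} through Theorem \ref{dA}, with the correct scaling checks on $\mathbf{h_1}$ and $f_1$ and the correct observation that $p'+\tau_0>1+\gamma$.
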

To complete the proof of Theorem \ref{main1}, it is enough to proof the following: Under the assumptions of Theorem \ref{dA} there exist a constant $C$ and $\delta_0>0$ such that if the conditions in \eqref{dh} is satisfied, then for $x,y\in B_{1/2}^+$ 
\begin{equation}\label{d00}
|u(y)-[u(x)+\nabla u(x)\cdot (y-x)]|\leq C|y-x|^{1+\gamma},\ \ \ |\nabla u(x)-\nabla u(y)|\leq C|x-y|^\gamma.
\end{equation}
So fix $x,y\in B_{1/2}^+$. Let us denote $(x_1,0)$ by $\overline{x}$. By Theorem \ref{dA} we have \begin{equation}\label{dL}
|u(y)-u(\overline{x})-\nabla u(\overline{x})\cdot(y-\overline{x})|\leq C|y-\overline{x}|^{1+\gamma}.
\end{equation}
\textit{\textbf{Case Ia:}} $y\in B_{x_2/2}(x)$, $|\nabla u(\overline{x})|\leq(2x_2/\rho_0)^\gamma$, where $\rho_0>0$ to be fixed later.\\
Then we have from \eqref{dL}, using triangular inequality $$|u(y)-u(\overline{x})|\leq Cx_2^{1+\gamma}.$$
Set $v(z)=\big(u(x+x_2z)-u(\overline{x})\big)/x_2^{1+\gamma}$ for $z\in B_1$. The $v$ satisfies $$-\ div\ (\langle A_1\nabla v,\nabla v\rangle^{\frac{p-2}{2}}A_1\nabla v)=-\ div\ \mathbf{h_1}+f_1\text{ in }B_{1}$$ with $A_1=A(x+x_2\ \cdot)\in \mathcal{O}_\lambda$, $$\mathbf{h_1}=\frac{x_2^{p-1}}{x_2^{(1+\gamma)(p-1)}}\big(\mathbf{h}(x+x_2\ \cdot)-\mathbf{h}(x)\big),\ \ f_1=\frac{x_2^{p}}{x_2^{(1+\gamma)(p-1)}}f(x+x_2\ \cdot).$$
Hence using Theorem \ref{dB} (the smallness of data required there follows), for $|z|\leq1$, $|v(z)-v(0)-\nabla v(0)\cdot z|\leq C|z|^{1+\gamma}$ and $|\nabla v(z)-\nabla v(0)|\leq C|z|^\gamma$. Consequently for $y\in B_{x_2/2}(x)$, $$|u(y)-u(x)-\nabla u(x)\cdot(y-x)|\leq C|y-x|^{1+\gamma},\ \ \ |\nabla u(y)-\nabla u(x)|\leq C|y-x|^\gamma.$$
\textbf{\textit{Case Ib:}} $y\in B_{x_2/2}(x)$, $|\nabla u(\overline{x})|\geq(2x_2/\rho_0)^\gamma$.\\
Set $\mu=|\nabla u(\overline{x})|^{1/\gamma}$ and assume $\mu\leq1$ without loss of generality as discussed in the proof of Theorem \ref{dA}. For $z\in B_1^+\cup B_1'$ define $w(z)=\big(u(\overline{x}+\mu z)-u(\overline{x})\big)/\mu^{1+\gamma}$. From Theorem \ref{dC} one has $$\sup_{z\in B_1^+}|w(z)|=\sup_{z\in B_\mu^+}\frac{|u(\overline{x}+z)-u(\overline{x})|}{\mu^{1+\gamma}}\leq C.$$ Now $w(0)=0$, $|\nabla w(0)|=1$ and $v$ satisfies
$$-\ div\ (\langle A_2\nabla w,\nabla w\rangle^{\frac{p-2}{2}}A_2\nabla w)=-\ div\ \mathbf{h_2}+f_2\text{ in }B_{1}^{+},\ v=g_2\text{ on }B_{1}',$$ with $A_2\in \mathcal{M}_\lambda$, $$\mathbf{h_2}=\frac{\mu^{p-1}}{\mu^{(1+\gamma)(p-1)}}\big(\mathbf{h}(\overline{x}+\mu\ \cdot)-\mathbf{h}(\overline{x})\big),\ \ f_2=\frac{\mu^{p}}{\mu^{(1+\gamma)(p-1)}}f(\overline{x}+\mu\ \cdot),\ \ g_2=\frac{g(\overline{x}+\mu\ \cdot)-g(\overline{x})}{\mu^{1+\gamma}}.$$ 
Note that $\n \mathbf{h_2}\n_{C^\Gamma(\overline{B_1^+})}\leq\delta_0,\ \n f_2\n_{L^q(B_1^{+})}\leq \delta_0,\ \n g_2\n_{C^{1,\gamma}(B_1^{'})}\leq2\delta_0,\ \n A_2-I\n_{C^\gamma(B_1^{+})}\leq\delta_0$.
By $C^{1,\beta}$ estimate i.e Proposition \ref{ci}, we have again a universal $\rho_0$ such that $|\nabla w|\geq1/2$ in $B_{\rho_0}^+$. Now going to $B_1^+$ (and applying Theorem \ref{d0} there, as in the proof of Theorem \ref{dA}), and then rescaling back to $B_{\rho_0}^+$ one has $w\in C^{1,\gamma}(\overline{B_{\rho_0}^+})$ with $\n w\n_{C^{1,\gamma}(\overline{B_{\rho_0}^+})}$ depending on $\rho_0,\gamma$. Note that $x_2/\mu\leq\rho_0/2$. So for $|z|\leq\rho_0/4$, $|w((0,x_2/\mu)+z)-w(0,x_2/\mu)-\nabla w(0,x_2/\mu)\cdot z|\leq C(\rho_0,\gamma)|z|^{1+\gamma}$, $|\nabla w((0,x_2/\mu)+z)-\nabla w(0,x_2/\mu)|\leq C(\rho_0,\gamma)|z|^\gamma$ and consequently for $|y-x|<x_2/2\leq\mu\rho_0/4$,$$|u(y)-u(x)-\nabla u(x)\cdot (y-x)|\leq C|y-x|^{1+\gamma},\ \ \ |\nabla u(y)-\nabla u(x)|\leq C|y-x|^\gamma.$$
\textbf{\textit{Case II:}} $|y-x|\geq x_2/2$.\\
Note that using \eqref{dL} and Theorem \ref{dA},
\begin{align*}
|u(y)-u(x)-\nabla u(x)\cdot (y-x)|&\leq |u(y)-u(\overline{x})-\nabla u(\overline{x})\cdot(y-\overline{x})|+|\nabla u(x)-\nabla u(\overline{x})||y-x|\\
&\ \ \ +|u(x)-u(\overline{x})-\nabla u(\overline{x})\cdot(x-\overline{x})|\\
&\leq C(|y-\overline{x}|^{1+\gamma}+|x-\overline{x}|^{1+\gamma}+x_2^\gamma|y-x|)\\
&\leq C_1|y-x|^{1+\gamma}.
\end{align*}
Also \begin{align}
|\nabla u(y)-\nabla u(x)|&\leq|\nabla u(y)-\nabla u(\overline{x})|+|\nabla u(x)-\nabla u(\overline{x})|\\
&\leq C|y-\overline{x}|^\gamma+Cx_2^\gamma\leq C_1|y-x|^\gamma.\label{d01}
\end{align}
This completes the proof Theorem \ref{main1}.

\subsection{Proof of Theorem \ref{main2}}
As in the case of Dirichlet data, with out loss of generality, we assume the situation of \eqref{E2} (where Dirichlet data replaced with conormal one) with $A(0)=I$ and $s=1$ . Let $\alpha\geq\gamma$.
\begin{lemma}\label{cc0}
Let $u\in W^{1,p}(B_1^{+})$ be a weak solution of
\[   \left\{
\begin{array}{ll} 
      \pde=-\ div\ \mathbf{h}+f\text{ in }B_{1}^{+}, \\
      \ \ \ \langle A\nabla u,\nabla u\rangle^{\frac{p-2}{2}}\langle A\nabla u,\nu\rangle=|g|^{p-2}g\text{ on }B_1',\\
   
\end{array} 
\right. \]
with $\n u\n_{L^\infty(B_{1}^{+})}\leq1$, $A\in\mathcal{M}_\lambda$. Then for given $\varepsilon>0$, there exists $\delta=\delta(p,\varepsilon,\lambda)>0$ such that if $\n \mathbf{h}\n_{C^\alpha(\overline{B^+})}\leq\delta$, $\n f\n_{L^q(B_1^{+})}\leq \delta$, $\n g\n_{C^{\gamma}(B_1^{'})}\leq\delta$ and $\n A-I\n_{C^\gamma(B_1^{+})}\leq\delta$, then we can find a corrector $\xi\in C^1(\overline{B_{1/2}^+})$, with $$|\xi(x)|\leq\varepsilon\text{ and }|\nabla\xi(x)|\leq\varepsilon\text{ for all }x\in B_{1/2}^+$$ satisfying$$-\ \Delta_p(u+\xi)=0\text{ in }B_{1/2}^{+} \text{ and }\nabla (u+\xi)\cdot\nu=0\text{ on }B_{1/2}'.$$
\end{lemma}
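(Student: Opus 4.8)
The plan is to argue by contradiction, exactly mirroring the proof of Lemma \ref{dc0} but with the conormal (Neumann) boundary condition in place of the Dirichlet one. Suppose the conclusion fails for some $\varepsilon_0>0$. Then we obtain sequences $\{u_j\}\subset W^{1,p}(B_1^+)$, $\{\mathbf{h}_j\}\subset C^\alpha(\overline{B_1^+})$, $\{f_j\}\subset L^q(B_1^+)$, $\{g_j\}\subset C^\gamma(B_1')$ and $\{A_j\}\subset\mathcal{M}_\lambda$ with $\n u_j\n_{L^\infty(B_1^+)}\le 1$, solving the conormal problem, and with $\n \mathbf{h}_j\n_{C^\alpha}$, $\n f_j\n_{L^q}$, $\n g_j\n_{C^\gamma}$, $\n A_j-I\n_{C^\gamma}$ all $\le 1/j$, but for which no admissible corrector $\xi$ (with $\n\xi\n_{C^1(\overline{B_{1/2}^+})}\le\varepsilon_0$) satisfying $-\Delta_p(u_j+\xi)=0$ in $B_{1/2}^+$ and $\nabla(u_j+\xi)\cdot\nu=0$ on $B_{1/2}'$ exists.

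Next I would invoke the uniform boundary estimate: by Proposition \ref{cj} (the Neumann analogue of Proposition \ref{ci}), applied to $\mathbf{A}_j(x,\zeta)=\langle A_j(x)\zeta,\zeta\rangle^{(p-2)/2}A_j(x)\zeta$ — which satisfies the structure conditions \eqref{cg} uniformly in $j$ because $A_j\in\mathcal{M}_\lambda$ — and with conormal datum $\phi_j=|g_j|^{p-2}g_j$ (whose $C^\gamma$ norm is controlled since $\n g_j\n_{C^\gamma}\le 1/j\le 1$ and $t\mapsto|t|^{p-2}t$ is locally Lipschitz for $p>2$), we get $\n u_j\n_{C^{1,\beta}(\overline{B_{3/4}^+})}\le C$ for a uniform $\beta>0$ and $C$. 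Then Arzelà–Ascoli yields a subsequence $u_j\to u_\infty$ in $C^1(\overline{B_{3/4}^+})$ with $u_\infty\in C^{1,\beta}(\overline{B_{3/4}^+})$. A standard weak-formulation limiting argument, testing the equation against $\varphi\in C^\infty_c(B_1)$ (allowing $\varphi$ not to vanish on $B_1'$, so the conormal term appears in the weak form), shows that the limit solves $-\Delta_p u_\infty=0$ in $B_{1/2}^+$ with $\nabla u_\infty\cdot\nu=0$ on $B_{1/2}'$; here the right-hand side terms $-\mathrm{div}\,\mathbf{h}_j+f_j$ and the boundary datum $|g_j|^{p-2}g_j$ all tend to zero in the appropriate (dual) topologies, and $\langle A_j\nabla u_j,\nabla u_j\rangle^{(p-2)/2}A_j\nabla u_j\to|\nabla u_\infty|^{p-2}\nabla u_\infty$ uniformly by the $C^1$ convergence together with $A_j\to I$ in $C^\gamma$. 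Finally, setting $\xi_j=u_\infty-u_j$, we have $-\Delta_p(u_j+\xi_j)=-\Delta_p u_\infty=0$ in $B_{1/2}^+$ and $\nabla(u_j+\xi_j)\cdot\nu=\nabla u_\infty\cdot\nu=0$ on $B_{1/2}'$, while $\n\xi_j\n_{C^1(\overline{B_{1/2}^+})}\to 0<\varepsilon_0$ for $j$ large — contradicting the choice of the sequence. This proves the lemma.

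The main obstacle, compared with the Dirichlet case of Lemma \ref{dc0}, is the passage to the limit in the \emph{boundary} term of the weak formulation: one must be careful that the conormal condition $\langle A_j\nabla u_j,\nabla u_j\rangle^{(p-2)/2}\langle A_j\nabla u_j,\nu\rangle=|g_j|^{p-2}g_j$ passes to $\nabla u_\infty\cdot\nu=0$. This is where the uniform $C^1$ convergence up to the boundary (from Proposition \ref{cj}) is essential — it lets us pass to the limit pointwise in the nonlinear vector field $\mathbf{A}_j(x,\nabla u_j)$ restricted to $B_{1/2}'$, while $\n|g_j|^{p-2}g_j\n_{L^\infty(B_1')}\le C\n g_j\n_{C^\gamma}^{p-1}\to 0$ kills the datum. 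A secondary technical point is that, unlike in Lemma \ref{dc0}, here there is no Dirichlet trace to extend $\xi_j$ and perform an odd reflection; but this is not needed for the statement of the present lemma, which only asserts the existence of the corrector on $B_{1/2}^+$ — the reflection/extension is deferred to the subsequent step (the analogue of Lemma \ref{dc} in the Neumann setting), where an even reflection across $B_{1/2}'$ will be used in place of the odd one.
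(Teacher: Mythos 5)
Your proposal is correct and follows exactly the approach the paper intends: the paper's own proof is a one-line remark that the argument is the same as Lemma \ref{dc0}, replacing Proposition \ref{ci} with the conormal boundary estimate Proposition \ref{cj}, and your write-up fills in precisely those details (compactness by contradiction, uniform $C^{1,\beta}$ bounds, Arzel\`a--Ascoli, passage to the limit in the weak formulation including the boundary term). One small aside: your final remark that the Neumann-case reflection in the next lemma is even (rather than odd) is the standard correct convention, though the paper's own wording in Lemma \ref{cc} says the opposite, which appears to be a slip in the text.
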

\begin{proof}
The proof is similar as in the case of Dirichlet data, see Lemma \ref{dc0}. We have similar $C^{1,\beta}$ estimates upto boundary, see Proposition \ref{cj}. 
\end{proof}

\begin{lemma}\label{cc}
There exists a $\lambda_0\in(0,1/2)$ and $\delta_0>0$, such that if $A\in\mathcal{M}_\lambda$, $\n \mathbf{h}\n_{C^\alpha(\overline{B^+})}\leq\delta_0$, $\n f\n_{L^q(B_1^{+})}\leq \delta_0$, $\n g\n_{C^{\gamma}(B_1^{'})}\leq\delta_0$ and $\n A-I\n_{C^\gamma(B_1^{+})}\leq\delta_0$ and $u\in W^{1,p}(B_1)$ is a weak solution to \[   \left\{
\begin{array}{ll} 
      \pde=-\ div\ \mathbf{h}+f  \text{ in }B_1^{+}, \\
      \ \ \ \langle A\nabla u,\nabla u\rangle^{\frac{p-2}{2}}\langle A\nabla u,\nu\rangle=|g|^{p-2}g\text{ on }B_1',\\
   
\end{array} 
\right. \]
with $\n u\n_{L^\infty(B_1^{+})}\leq1$, then
$$\sup_{x\in B_{\lambda_0}^+}|u(x)-[u(0)+\nabla u(0)\cdot x]|\leq\lambda_0^{1+\gamma},\ \ \ \sup_{x\in B_{\lambda_0}^+}|\nabla u(x)-\nabla u(0)|\leq\lambda_0^\gamma.$$
\end{lemma}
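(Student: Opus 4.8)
The plan is to mimic the proof of Lemma \ref{dc} exactly, replacing the Dirichlet corrector lemma by its Neumann counterpart Lemma \ref{cc0}. First I would fix $\varepsilon > 0$ to be chosen at the very end, and apply Lemma \ref{cc0} to obtain a $\delta_0 = \delta_0(p,\varepsilon,\lambda)$ so that, under the smallness hypotheses $\n\mathbf{h}\n_{C^\alpha}\leq\delta_0$, $\n f\n_{L^q(B_1^+)}\leq\delta_0$, $\n g\n_{C^\gamma(B_1')}\leq\delta_0$ and $\n A-I\n_{C^\gamma}\leq\delta_0$, there is a corrector $\xi\in C^1(\overline{B_{1/2}^+})$ with $\n\xi\n_{C^1(\overline{B_{1/2}^+})}\leq\varepsilon$ such that $u+\xi$ is $p$-harmonic in $B_{1/2}^+$ with vanishing conormal (here just Neumann, since $A(0)=I$ on the flat piece) derivative on $B_{1/2}'$. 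Then extend $\xi$ to $B_1^+\cup B_1'$ keeping $\n\xi\n_{C^1}\leq\varepsilon$, and reflect $u+\xi$ \emph{evenly} across $\{x_2=0\}$ to obtain a function which is $p$-harmonic in $B_{1/2}$ (even reflection is the one compatible with the homogeneous Neumann condition, in contrast to the odd reflection used in the Dirichlet case of Lemma \ref{dc}); call the reflected function $v\in\mathcal{A}(p,d)$.

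Next I would invoke the quantitative interior estimate, Theorem \ref{quant}: since $v$ is $p$-harmonic in $B_{1/2}$ with $\n v\n_{L^\infty(B_{1/2})}\leq\n u\n_{L^\infty(B_1^+)}+\varepsilon\leq 1+\varepsilon\leq 2$, we get $\n v\n_{C^{1,p'+\tau_0}(\overline{B_{1/4}^+})}\leq C_p$ where $p'=\frac{1}{p-1}$; after a harmless rescaling this gives the same Taylor expansion control on $u+\xi$ near $0$ at scale $\lambda_0$. Then for $x\in B_{\lambda_0}^+$ with $\lambda_0\in(0,1/2)$ to be fixed, I would write, by the triangle inequality exactly as in Lemma \ref{dc},
\begin{align*}
|u(x)-[u(0)+\nabla u(0)\cdot x]| \leq&\ |(u+\xi)(x)-[(u+\xi)(0)+\nabla(u+\xi)(0)\cdot x]| \\
&+ |\xi(x)|+|\xi(0)|+|\nabla\xi(0)\cdot x| \\
\leq&\ C_p(1+\varepsilon)\lambda_0^{p'+\tau_0}+3\varepsilon,
\end{align*}
and similarly $|\nabla u(x)-\nabla u(0)|\leq C_p(1+\varepsilon)\lambda_0^{p'-1+\tau_0}+2\varepsilon$. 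Since $p'+\tau_0 = \frac{1}{p-1}+\tau_0 > 1+\gamma$ by the definition $\gamma=\frac{1-2/q}{p-1}<\frac{1}{p-1}$ (and $\tau_0>0$), and likewise $p'-1+\tau_0>\gamma$, I can choose $\lambda_0$ small so that $C_p\cdot 2\cdot\lambda_0^{p'+\tau_0}\leq\frac12\lambda_0^{1+\gamma}$ and $C_p\cdot 2\cdot\lambda_0^{p'-1+\tau_0}\leq\frac12\lambda_0^{\gamma}$, then fix $\varepsilon=\lambda_0^{1+\gamma}/6\leq\lambda_0^{\gamma}/4$, which closes both estimates.

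The main obstacle I anticipate is the reflection step: unlike the Dirichlet case where odd reflection of a solution vanishing on $B_{1/2}'$ is immediate, here one must check that the even reflection of $u+\xi$ across the flat boundary is genuinely a weak solution of $\Delta_p=0$ in the full ball $B_{1/2}$, which uses that the conormal derivative reduces to the plain normal derivative $\partial_{x_2}(u+\xi)=0$ on $B_{1/2}'$ precisely because $A(0)=I$ and the boundary is flat there — this is exactly the reduction achieved at the start of Section 3.2. Once that is in place, $\mathcal{A}(p,d)$ applies and Theorem \ref{quant} gives the needed regularity with a universal constant. A secondary technical point is that the corrector $\xi$ produced by Lemma \ref{cc0} lives only on $B_{1/2}^+$, so one genuinely needs a $C^1$ extension to $B_1^+\cup B_1'$ with controlled norm before reflecting; this is a standard extension and does not affect the constants beyond a fixed factor. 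With these two points handled, the proof is the verbatim analogue of Lemma \ref{dc}.
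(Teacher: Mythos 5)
Your argument matches the paper's proof step for step: apply the Neumann corrector Lemma~\ref{cc0}, extend and reflect, invoke Theorem~\ref{quant}, and close with the same choice of $\lambda_0$ and $\varepsilon$ as in Lemma~\ref{dc}. Notably, you have the reflection right---even reflection across $\{x_2=0\}$ is the one that preserves $p$-harmonicity when the normal derivative vanishes there---whereas the paper's one-line proof reads ``odd reflection instead of even one in the Dirichlet case,'' which appears to have the two words swapped (the paper's own Dirichlet proof in Lemma~\ref{dc} used \emph{odd} reflection, so the Neumann case must be even, as you say).

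One slip to fix in your write-up: you identify $p'=\frac{1}{p-1}$, but in the paper and in the $C^{p'}$ conjecture $p'$ is the H\"older conjugate $\frac{p}{p-1}=1+\frac{1}{p-1}$. With your reading $p'=\frac{1}{p-1}$, the stated justification that $\frac{1}{p-1}+\tau_0>1+\gamma$ follows from $\gamma<\frac{1}{p-1}$ and $\tau_0>0$ is incorrect (those facts only give $\frac{1}{p-1}+\tau_0>\gamma$), and $p'-1+\tau_0$ would be $\frac{2-p}{p-1}+\tau_0$, whose positivity is not even guaranteed for $p>2$. With the correct $p'=\frac{p}{p-1}$, both $p'+\tau_0=1+\frac{1}{p-1}+\tau_0>1+\gamma$ and $p'-1+\tau_0=\frac{1}{p-1}+\tau_0>\gamma$ do follow immediately from $\gamma<\frac{1}{p-1}$ and $\tau_0>0$, and your choice of $\lambda_0$ and $\varepsilon$ then closes exactly as you describe.
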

\begin{proof}
Here we have to do an odd reflection instead of even one in the Dirichlet case. Rest of the proof is similar to the proof of Lemma \ref{dc}.
\end{proof}

As before let us fix the $\lambda_0$ as in Lemma \ref{cc} and get the corollary below.
\begin{corollary}\label{cc5}
There exists a $\delta_0>0$, such that if $A\in\mathcal{M}_\lambda$, $\n \mathbf{h}\n_{C^\alpha(\overline{B^+})}\leq\delta_0$, $\n f\n_{L^q(B_1^{+})}\leq \delta_0$, $\n g\n_{C^{\gamma}(B_1^{'})}\leq\delta_0$ and $\n A-I\n_{C^\gamma(B_1^{+})}\leq\delta_0$ and $u\in W^{1,p}(B_1)$ is a weak solution to \[   \left\{
\begin{array}{ll} 
      \pde=-\ div\ \mathbf{h}+f  \text{ in }B_1^{+}, \\
      \ \ \ \langle A\nabla u,\nabla u\rangle^{\frac{p-2}{2}}\langle A\nabla u,\nu\rangle=|g|^{p-2}g\text{ on }B_1',\\
   
\end{array} 
\right. \]
 with $\n u\n_{L^\infty(B_1^{+})}\leq1$, then
$$\sup_{x\in B_{\lambda_0}^+}|u(x)-u(0)|\leq\lambda_0^{1+\gamma}+|\nabla u(0)|\lambda_0.$$
\end{corollary}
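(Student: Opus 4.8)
The plan is to deduce Corollary \ref{cc5} as an immediate consequence of Lemma \ref{cc} together with the triangle inequality, in exact parallel with the derivation of Corollary \ref{dc5} from Lemma \ref{dc} in the Dirichlet setting. First I would fix $\delta_0>0$ and $\lambda_0\in(0,1/2)$ to be precisely those produced by Lemma \ref{cc}, so that under the stated smallness hypotheses on $\mathbf{h}$, $f$, $g$ and $A-I$ (with $A\in\mathcal{M}_\lambda$), any weak solution $u$ of the Neumann problem with $\n u\n_{L^\infty(B_1^+)}\leq1$ satisfies both oscillation estimates
\[
\sup_{x\in B_{\lambda_0}^+}|u(x)-[u(0)+\nabla u(0)\cdot x]|\leq\lambda_0^{1+\gamma},\qquad \sup_{x\in B_{\lambda_0}^+}|\nabla u(x)-\nabla u(0)|\leq\lambda_0^\gamma.
\]
The second estimate is not even needed here; only the first is used.

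The next step is purely arithmetic: for $x\in B_{\lambda_0}^+$ I would write
\[
|u(x)-u(0)|\leq |u(x)-[u(0)+\nabla u(0)\cdot x]| + |\nabla u(0)\cdot x| \leq \lambda_0^{1+\gamma} + |\nabla u(0)|\,|x|,
\]
and then bound $|x|\leq\lambda_0$ since $x\in B_{\lambda_0}^+$. Taking the supremum over $x\in B_{\lambda_0}^+$ gives
\[
\sup_{x\in B_{\lambda_0}^+}|u(x)-u(0)|\leq \lambda_0^{1+\gamma} + |\nabla u(0)|\lambda_0,
\]
which is exactly the claimed bound.

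There is essentially no obstacle in this statement: it is a bookkeeping corollary whose only content is that the affine approximation error plus the linear term is controlled by what Lemma \ref{cc} already delivers. The one point worth a sentence of care is that Lemma \ref{cc} must be applied with the \emph{same} $\delta_0$ and $\lambda_0$, and that the hypotheses of Corollary \ref{cc5} have been stated to match those of Lemma \ref{cc} verbatim (including $A\in\mathcal{M}_\lambda$, which in particular encodes $A(0)=I$), so that the application is legitimate. The only mild subtlety inherited from Lemma \ref{cc} — the use of an odd rather than even reflection across $B_1'$ to handle the conormal condition — is already absorbed into that lemma and plays no further role here. Hence the proof is the two displayed lines above preceded by the invocation of Lemma \ref{cc}.
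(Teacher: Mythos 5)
Your proof is correct and follows the same route as the paper: fix $\lambda_0,\delta_0$ from Lemma \ref{cc} and apply the triangle inequality to the affine approximation estimate. The paper explicitly states that Corollary \ref{cc5}, like its Dirichlet analogue Corollary \ref{dc5}, is a direct consequence of the preceding lemma and the triangle inequality, and gives no further detail, so your two-line derivation is precisely the intended argument.
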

\begin{theorem}\label{nC}
There exist constants $C$ and $\delta_0>0$ such that if $A\in\mathcal{M}_\lambda$ and $u\in W^{1,p}(B_1^{+})$ solves\[   \left\{
\begin{array}{ll} 
      \pde=-\ div\ \mathbf{h}+f  \text{ in }B_1^{+}, \\
      \ \ \ \langle A\nabla u,\nabla u\rangle^{\frac{p-2}{2}}\langle A\nabla u,\nu\rangle=|g|^{p-2}g\text{ on }B_1',\\
   
\end{array} 
\right. \] with $\n u\n_{L^\infty(B_1^{+})}\leq1$, $\n \mathbf{h}\n_{C^\Gamma(\overline{B^+})}\leq\delta_0$, $\n f\n_{L^q(B_1^{+})}\leq \delta_0$, $\n g\n_{C^{\gamma}(B_1^{'})}\leq\delta_0$ and $\n A-I\n_{C^\gamma(B_1^{+})}\leq\delta_0$, then for all $0<r\leq1$,
$$\sup_{x\in B_r^+}|u(x)-u(0)|\leq Cr^{1+\gamma}\big(1+|\nabla u(0)|r^{-\gamma}\big),\ \ \sup_{x\in B_r^+}|\nabla u(x)-\nabla u(0)|\leq Cr^\gamma\big(1+|\nabla u(0)|r^{-\gamma}\big).$$
\end{theorem}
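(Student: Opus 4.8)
The plan is to mirror the proof of Theorem \ref{dC} almost verbatim, substituting Corollary \ref{cc5} for Corollary \ref{dc5} and keeping track of the conormal datum $|g|^{p-2}g$ in place of the Dirichlet datum $g$. We set $\delta = \delta_0/2$ with $\delta_0$ coming from Corollary \ref{cc5}, assume $\mathbf{h},f,g,A-I$ satisfy the smallness bounds with $\delta$, put $a_0=1$ and iterate $a_{k+1} = \lambda_0^{1+\gamma}a_k + \frac{1}{\delta}|\nabla u(0)|\lambda_0^{k+1}$. The claim to establish by induction is $\sup_{x\in B_{\lambda_0^k}^+}|u(x)-u(0)| \leq a_k$, with the base case $k=1$ given by Corollary \ref{cc5}.

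For the inductive step I would rescale, setting $w(x) = (u(\lambda_0^k x)-u(0))/a_k$ on $B_1^+$, so $\|w\|_{L^\infty(B_1^+)}\leq 1$ and $w$ solves the analogous conormal problem with coefficients $A_1 = A(\lambda_0^k\,\cdot)$, datum $\mathbf{h}_1 = \lambda_0^{k(p-1)}a_k^{-(p-1)}(\mathbf{h}(\lambda_0^k\,\cdot)-\mathbf{h}(0))$, $f_1 = \lambda_0^{kp}a_k^{-(p-1)}f(\lambda_0^k\,\cdot)$, and a boundary datum $g_1$. The point requiring care is the boundary term: the conormal condition scales as $\langle A_1\nabla w,\nabla w\rangle^{\frac{p-2}{2}}\langle A_1\nabla w,\nu\rangle = |g_1|^{p-2}g_1$ on $B_1'$ with $g_1 = \lambda_0^k a_k^{-1} g(\lambda_0^k\,\cdot)$ (the homogeneity of $|g|^{p-2}g$ is $p-1$, which matches the $\lambda_0^{k(p-1)}/a_k^{p-1}$ factor produced by rescaling $u$ by $a_k$ and $x$ by $\lambda_0^k$). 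The estimates $\|f_1\|_{L^q(B_1^+)} \leq \|f\|_{L^q(B_{\lambda_0^k}^+)} \leq \delta_0$ and the $C^\Gamma$ bound on $\mathbf{h}_1$ go through exactly as in Theorem \ref{dC} since $\Gamma = 1-2/q$ and $p - 2/q \geq 1$. For $g_1$ one splits into the two cases $|g(0)|\lambda_0^k \leq \delta\lambda_0^{(1+\gamma)k}$ (absorbed into the numerator of $a_k$) and the reverse (absorbed into the $\frac{1}{\delta}|\nabla u(0)|\lambda_0^k$ term), together with the H\"older seminorm bound $\|g_1\|_{C^\gamma(B_1')} \leq \lambda_0^{\gamma k}a_k^{-1}\lambda_0^k \|g\|_{C^\gamma} \leq \delta$ — note that here we only need the $C^\gamma$ (not $C^{1,\gamma}$) bound, which is the main structural difference from the Dirichlet case, and it makes the verification slightly simpler. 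Once these smallness conditions are in place Corollary \ref{cc5} applies to $w$, giving $\sup_{B_{\lambda_0}^+}|w(x)-w(0)| \leq \lambda_0^{1+\gamma} + |\nabla w(0)|\lambda_0$, which after unscaling ($\nabla w(0) = \lambda_0^k a_k^{-1}\nabla u(0)$) yields $\sup_{B_{\lambda_0^{k+1}}^+}|u(x)-u(0)| \leq \lambda_0^{1+\gamma}a_k + |\nabla u(0)|\lambda_0^{k+1} \leq a_{k+1}$, closing the induction. The gradient estimate $\sup_{B_{\lambda_0^{k+1}}^+}|\nabla u(x)-\nabla u(0)| \leq \lambda_0^{-k}a_k\lambda_0^\gamma$ follows from the second conclusion of Corollary \ref{cc5} in the same way.

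To pass from the geometric-scale estimates to all $0 < r \leq 1$, I introduce $b_k = a_k\lambda_0^{-k(1+\gamma)}$, which satisfies $b_{k+1} = b_k + \frac{1}{\delta}|\nabla u(0)|\lambda_0^{-(k+1)\gamma}$, so $b_k = 1 + \frac{1}{\delta}|\nabla u(0)|\sum_{i=1}^k\lambda_0^{-\gamma i}$, a geometric sum bounded by $1 + \frac{1}{\delta}\frac{|\nabla u(0)|}{\lambda_0^\gamma(1-\lambda_0^\gamma)}\lambda_0^{-\gamma k}$. Picking $k$ with $\lambda_0^{k+1} < r \leq \lambda_0^k$ converts $\lambda_0^{-\gamma k}$ into a constant multiple of $r^{-\gamma}$ and gives $\sup_{B_r^+}|u(x)-u(0)| \leq C r^{1+\gamma}(1+|\nabla u(0)|r^{-\gamma})$; the range $\lambda_0 < r \leq 1$ is handled trivially using $\|u\|_{L^\infty(B_1^+)}\leq 1$, and the gradient estimate is obtained identically from the $b_{k-1}$ bound. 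Finally one renames $\delta$ back to $\delta_0$ and chooses $C = C(\lambda_0,\gamma)$. The main obstacle — and it is a mild one — is bookkeeping the scaling of the nonlinear conormal boundary term and verifying that the two-case splitting for $|g_1|$ is still consistent with the definition of the sequence $a_k$; apart from the odd reflection already flagged in Lemma \ref{cc} and the use of $C^\gamma$ rather than $C^{1,\gamma}$ for $g$, nothing else changes relative to the Dirichlet argument in Theorem \ref{dC}.
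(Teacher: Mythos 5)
Your overall blueprint is the right one, and the scaling of the conormal datum ($g_1 = \lambda_0^k a_k^{-1}g(\lambda_0^k\,\cdot)$, matching the $(p-1)$-homogeneity) is correct, but there is a genuine gap in the step you describe as mild bookkeeping. To bound $|g_1(x')|$ in the ``reverse'' case $|g(0)|\lambda_0^k > \delta\lambda_0^{(1+\gamma)k}$ you propose to absorb $\tfrac{\lambda_0^k}{a_k}|g(0)|$ into the lower bound $a_k \geq \tfrac{1}{\delta}|\nabla u(0)|\lambda_0^k$, which gives $\delta\,|g(0)|/|\nabla u(0)|$. That quotient is not controlled by the hypotheses of the theorem alone: if $|\nabla u(0)|$ is very small relative to $|g(0)|$ the bound blows up, so the induction does not close from the stated assumptions. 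The missing ingredient --- and it is the very first line of the paper's proof --- is that the conormal condition itself \emph{forces} $|g(0)|\leq K|\nabla u(0)|$: since $A(0)=I$, evaluating $\langle A\nabla u,\nabla u\rangle^{\frac{p-2}{2}}\langle A\nabla u,\nu\rangle=|g|^{p-2}g$ at the origin yields $|g(0)|^{p-1}=|\nabla u(0)|^{p-2}\,|\partial_\nu u(0)|\leq|\nabla u(0)|^{p-1}$ (using that $\nabla u$ is well-defined and continuous at $0$ by the a priori $C^{1,\beta}$ estimate of Proposition \ref{cj}). Only with this do you get $\tfrac{\lambda_0^k}{a_k}|g(0)|\leq K\delta$, and the paper accordingly chooses $\delta=\delta_0/(1+K)$ (not $\delta_0/2$) so that $|g_1|\leq(1+K)\delta=\delta_0$.

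This is not merely an unstated constant: without the relation $|g(0)|\lesssim|\nabla u(0)|$, the sequence $a_k$ --- which, importantly, is built from $|\nabla u(0)|$ and not from any quantity involving $g$ --- has no reason to dominate the rescaled boundary datum, and the induction would fail exactly when $\nabla u(0)=0$ but $g(0)\neq 0$ (a configuration incompatible with the PDE, which is precisely the point). The analogous implicit use in the Dirichlet Theorem \ref{dC} is the tangential relation $|\nabla g(0)|\leq|\nabla u(0)|$ coming from $u=g$ on $B_1'$; that one is so automatic it is easy to miss, but the Neumann counterpart is a real observation about the nonlinear boundary operator and should be stated. Once you insert it, the rest of your proposal (the $C^\gamma$-seminorm bound on $g_1$, the $b_k$-bookkeeping, and the unscaling) is correct and essentially identical to the paper's argument.
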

\begin{proof}
Note that $\exists$ a $K>0$ such that $|g(0)|\leq K|\nabla u(0)|$. Let $\delta=\frac{\delta_0}{1+K}$. Assume $\n f\n_{L^\infty(B_1^{+})}\leq\delta$, $\n g\n\leq\delta$, $\n A-I\n_{C^\gamma(B_1^{+})}\leq\delta$. Without loss of generality assume $\delta\leq1$. Set $a_0=1$ and for $k=0,1,2,3,\cdots$ $$a_{k+1}=\lambda_0^{1+\gamma}a_k+\frac{1}{\delta}|\nabla u(0)|\lambda_0^{k+1}.$$
Then we claim that $$\sup_{x\in B_{\lambda_0^k}^+}|u(x)-u(0)|\leq a_k.$$  Our claim is true for $k=1$, by Corollary \ref{cc5}. Assume the claim is true for $k$.
Consider the function $w$ defined by $$w(x)=\frac{u(\lambda_0^kx)-u(0)}{a_k}, \text{ for }x\in B_1^{+}.$$ By induction, $\n w\n_{L^\infty(B_1^{+})}\leq1$ and \[   \left\{
\begin{array}{ll} 
      -\ div\ (\langle A_1\nabla w,\nabla w\rangle^{\frac{p-2}{2}}A_1\nabla w)=-\ div\ \mathbf{h_1}+f_1  \text{ in }B_1^{+}, \\
      \ \ \ \ \ \langle A_1\nabla w,\nabla w\rangle^{\frac{p-2}{2}}\langle A_1\nabla w,\nu\rangle=|g_1|^{p-2}g_1\text{ on }B_1\cap\partial\rn_+,\\
   
\end{array} 
\right. \]  with $A_1=A(\lambda_0^k\ \cdot)$, $$\mathbf{h_1}=\frac{\lambda_0^{k(p-1)}}{a_k^{p-1}}\big(\mathbf{h}(\lambda_0^k\ \cdot)-\mathbf{h}(0)\big),\ \ f_1=\frac{\lambda_0^{kp}}{a_k^{p-1}}f(\lambda_0^k\ \cdot),\ \ g_1=\frac{\lambda_0^k}{a_k}g(\lambda_0^k\ \cdot).$$ Note that 
$A_1\in\mathcal{M}_\lambda$, $\n A_1-I\n_{C^\gamma(B_1)}\leq\delta_0$ and as before $\n \mathbf{h_1}\n_{C^\Gamma(\overline{B^+})}\leq\delta_0$, $\n f_1\n_{\li}\leq\delta_0$ and for $x',y'\in B_1'$
$$|g_1(x')|\leq\frac{\lambda_0^k}{a_k}\big[|g(0)|+\delta\lambda_0^{k\gamma}\big]\leq\frac{K|\nabla u(0)|\lambda_0^k+\delta\lambda_0^{(1+\gamma)k}}{\lambda_0^{1+\gamma}a_{k-1}+\frac{1}{\delta}|\nabla u(0)|\lambda_0^k}\leq (1+K)\delta=\delta_0,$$ 
as well as $$|g_1(x')-g_1(y')|\leq\frac{\lambda_0^{(1+\gamma)k}}{a_k}\n g\n_{C^\gamma(B_1')} |x'-y'|^\gamma\leq\delta|x'-y'|^\gamma.$$
Using Corollary \ref{cc5} as before in Dirichlet case we have
$$\sup_{x\in B_{\lambda_0^{k+1}}^+}|u(x)-u(0)|\leq\lambda_0^{1+\gamma}a_k+|\nabla u(0)|\lambda_0^{k+1}\leq\lambda_0^{1+\gamma}a_k+\frac{1}{\delta}|\nabla u(0)|\lambda_0^{k+1}=a_{k+1}.$$
Thus our claim is established.\\
On the other hand $$\sup_{x\in B_{\lambda_0}^+}|\nabla w(x)-\nabla w(0)|\leq\lambda_0^\gamma\Rightarrow\sup_{x\in B_{\lambda_0^{k+1}}^+}|\nabla u(x)-\nabla u(0)|\leq \frac{a_k}{\lambda_0^k}\lambda_0^\gamma.$$
Now we do exactly same analysis as in Dirichlet case to achieve the result.
\end{proof}
Now we have the analogous result of Theorem \ref{dA} whose proof is quite similar that of Theorem \ref{dA}. Nonetheless we give a short proof of it.
\begin{theorem}
There exist constants $C$ and $\delta_0>0$ such that if $A\in\mathcal{M}_\lambda$ and $u\in W^{1,p}(B_1^{+})$ solves\[   \left\{
\begin{array}{ll} 
      \pde=-\ div\ \mathbf{h}+f  \text{ in }B_1^{+}, \\
      \ \ \langle A\nabla u,\nabla u\rangle^{\frac{p-2}{2}}\langle A\nabla u,\nu\rangle=|g|^{p-2}g\text{ on }B_1',\\
   
\end{array} 
\right. \] with $\n u\n_{L^\infty(B_1^{+})}\leq1$, $\n \mathbf{h}\n_{C^\Gamma(\overline{B^+})}\leq\delta_0$, $\n f\n_{L^q(B_1^{+})}\leq \delta_0$, $\n g\n_{C^{\gamma}(B_1^{'})}\leq\delta_0$ and $\n A-I\n_{C^\gamma(B_1^{+})}\leq\delta_0$, then for all $0<r\leq1$,
$$\sup_{x\in B_r^+}|u(x)-[u(0)+\nabla u(0)\cdot x]|\leq Cr^{1+\gamma},\ \ \ \sup_{x\in B_r^+}|\nabla u(x)-\nabla u(0)|\leq Cr^\gamma.$$
\end{theorem}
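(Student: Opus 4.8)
The plan is to mirror the proof of Theorem \ref{dA} step by step, substituting the conormal boundary condition for the Dirichlet one wherever needed, and invoking the Neumann analogues of the auxiliary results established above.

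\medskip

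First I would dispose of the \emph{small-gradient regime}: if $|\nabla u(0)| \leq r^\gamma$, then Theorem \ref{nC} immediately gives $\sup_{x\in B_r^+}|u(x)-u(0)| \leq Cr^{1+\gamma}(1+|\nabla u(0)|r^{-\gamma}) \leq 2Cr^{1+\gamma}$, so by the triangle inequality $\sup_{x\in B_r^+}|u(x)-[u(0)+\nabla u(0)\cdot x]| \leq \sup_{x\in B_r^+}|u(x)-u(0)| + |\nabla u(0)|r \leq (2C+1)r^{1+\gamma}$, and likewise $\sup_{x\in B_r^+}|\nabla u(x)-\nabla u(0)| \leq 2Cr^\gamma$ directly from Theorem \ref{nC}.

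\medskip

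The substantive case is $|\nabla u(0)| > r^\gamma$. Here I would set $\mu = |\nabla u(0)|^{1/\gamma}$ (assuming $\mu \leq 1$ without loss of generality, as discussed before Lemma \ref{dc}), and rescale $v(x) = \big(u(\mu x)-u(0)\big)/\mu^{1+\gamma}$ for $x \in B_1^+$. As in Theorem \ref{dA}, Theorem \ref{nC} yields $\sup_{B_1^+}|v| \leq C$, while $v(0)=0$ and $|\nabla v(0)|=1$; and $v$ solves the rescaled problem $-\operatorname{div}(\langle A_1\nabla v,\nabla v\rangle^{\frac{p-2}{2}}A_1\nabla v)=-\operatorname{div}\mathbf{h_1}+f_1$ in $B_1^+$ with conormal condition $\langle A_1\nabla v,\nabla v\rangle^{\frac{p-2}{2}}\langle A_1\nabla v,\nu\rangle = |g_1|^{p-2}g_1$ on $B_1'$, where $A_1 = A(\mu\,\cdot)\in\mathcal{M}_\lambda$, $\mathbf{h_1} = \mu^{p-1-(1+\gamma)(p-1)}(\mathbf{h}(\mu\,\cdot)-\mathbf{h}(0))$, $f_1 = \mu^{p-(1+\gamma)(p-1)}f(\mu\,\cdot)$, and $g_1 = \mu^{1/\gamma \cdot \text{(appropriate power)}}g(\mu\,\cdot)$ scaled so that $\|g_1\|_{C^\gamma(B_1')}$ stays $\lesssim \delta_0$ (this last smallness is where one must be a little careful, exactly as flagged in the introduction and handled in the proof of Theorem \ref{nC}). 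Then Proposition \ref{cj} gives a universal radius $\rho_0$ with $|\nabla v| \geq 1/2$ on $B_{\rho_0}^+$; on that ball the operator is uniformly elliptic and we may freeze the gradient nonlinearity and appeal to the quadratic (Neumann) boundary theory of Section 5 (the analogue of Theorem \ref{d0}) to obtain $\sup_{B_r^+}|v(x)-[v(0)+\nabla v(0)\cdot x]| \leq Cr^{1+\gamma}$ and $\sup_{B_r^+}|\nabla v(x)-\nabla v(0)| \leq Cr^\gamma$ for $0 < r \leq \rho_0/2$. Unscaling translates this into the desired estimates for $u$ on $0 < r \leq \mu\rho_0/2$; and for the intermediate range $\mu\rho_0/2 \leq r < \mu$ one bounds the left-hand side by its value at $r=\mu$ and uses Theorem \ref{nC} together with $|\nabla u(0)|\mu^{-\gamma}=1$ to absorb everything into $(2C+1)(2/\rho_0)^{1+\gamma}r^{1+\gamma}$, exactly as in the closing lines of Theorem \ref{dA}.

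\medskip

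The main obstacle, as in the Dirichlet case, is ensuring that the rescaled conormal datum $g_1$ remains uniformly small at each scale: unlike $\mathbf{h}$ and $f$, the Neumann datum does not automatically shrink under the natural scaling, and one must exploit the structural comparison $|g(0)| \leq K|\nabla u(0)|$ (valid since the conormal derivative equals $\langle A\nabla u,\nabla u\rangle^{(p-2)/2}\langle A\nabla u,\nu\rangle$ and $A(0)=I$) to balance the $g_1$ bound against the denominator $a_k$ — this is precisely the mechanism already used in the proof of Theorem \ref{nC}, and the present proof simply reuses it. Everything else is a routine transcription of the Dirichlet argument with even reflections replaced by odd reflections.
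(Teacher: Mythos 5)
Your proposal is correct and takes essentially the same route as the paper: the same split into $|\nabla u(0)|\leq r^\gamma$ versus $|\nabla u(0)|>r^\gamma$, the same rescaling $v=(u(\mu\,\cdot)-u(0))/\mu^{1+\gamma}$ with $\mu=|\nabla u(0)|^{1/\gamma}$ controlled via Theorem \ref{nC}, the nondegeneracy $|\nabla v|\geq 1/2$ on $B_{\rho_0}^+$ from Proposition \ref{cj}, the appeal to the quadratic conormal result (Theorem \ref{C}) and the final unscaling together with the intermediate range $\mu\rho_0/2\leq r<\mu$, exactly as in Theorem \ref{dA}. Your flagged point about the scaling of the Neumann datum is also handled the same way in the paper, via the structural bound $|g(0)|\leq K|\nabla u(0)|$ already used in Theorem \ref{nC}.
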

\begin{proof}If $|\nabla u(0)|\leq r^\gamma$, then as in the Dirichlet case we are done.
So let us assume $|\nabla u(0)|>r^\gamma$ and define $\mu=|\nabla u(0)|^{1/\gamma}$. Set for $x\in B_1^{+}$ $$v(x)=\frac{u(\mu x)-u(0)}{\mu^{1+\gamma}}.$$
Then Theorem \ref{nC} gives $\n v\n_{L^\infty(B_1^+)}\leq C$.
Now $v(0)=0$, $|\nabla v(0)|=1$ and $v$ satisfies\[   \left\{
\begin{array}{ll} 
      -\ div\ (\langle A_1\nabla v,\nabla v\rangle^{\frac{p-2}{2}}A_1\nabla v)=-\ div\ \mathbf{h_1}+f_1  \text{ in }B_1^+, \\
      \ \ \ \langle A_1\nabla v,\nabla v\rangle^{\frac{p-2}{2}}\langle A_1\nabla v,\nu\rangle=|g_1|^{p-2}g_1\text{ on }B_1',\\
   
\end{array} 
\right. \]
with $A_1\in \mathcal{M}_\lambda$, $\n \mathbf{h_1}\n_{C^\Gamma(\overline{B_1^+})}\leq\delta_0$, $\n f_1\n_{\li}\leq\delta_0$, $\n g_1\n_{C^\gamma(B_1\cap\partial\rn_+)}\leq(1+K)\delta_0$ ($K$ as in last Theorem), $\n A_1-I\n_{C^\gamma(B_1)}\leq\delta_0$. 
Applying local $C^{1,\alpha}$ estimates for genaralized $p-$Poisson equation (Proposition \ref{cj}) we have a $\rho_0$ independent of $w$ such that $$|\nabla v(x)|>\frac{1}{2}\text{ for all }x\in B_{\rho_0}^+.$$
At this point we apply Theorem \ref{C}  in section \ref{sc}. Proceeding as in the Dirichet case we get our result.
\end{proof}
Similar analysis starting from Theorem \ref{dB} upto inequality \eqref{d01} completes the proof of Theorem \ref{main2}.

\section{Quadratic Dirichlet Case}\label{sd}
In this section as well as the next section,  we establish auxiliary  regularity results for equations with quadratic nonlinearities ( both with Dirichlet and Neumann conditions)  that were used in the proofs of our main results by adapting  the ideas of Giaquinta and Giusti (\cite{gg})  in our framework. The reader should note that we have  an extra divergence term and $f\in L^q$ with $q>n$ instead of $L^\infty$. In our proofs, we point out the appropriate  modifications that are needed. Let $\Gamma=1-n/q$.
\begin{theorem}\label{d0}
Let $u\in W^{1,2}(B_1^+)\cup L^\infty(B_1^+)$ solves
\begin{equation}\label{d}
-div\big(\mathbf{A}(\cdot,\nabla u)\big)=-div\ \mathbf{h}+f\ \text{in }B_1^+\ \ \ u=\phi\text{ on }B_1'.
\end{equation}
with $|\partial_{\zeta_j}\mathbf{A}^i|\leq L$, $\partial_{\zeta_j}\mathbf{A}^i\xi_i\xi_j\geq\Lambda|\xi|^2$, $|\mathbf{A}^i(x,\zeta)-\mathbf{A}^i(y,\zeta)|\leq L(1+|\zeta|)|x-y|^\alpha$, $\n f\n_{\li}\leq L$, $\n \mathbf{h}\n_{C^\Gamma(B_1)}\leq L$, $\n\phi\n_{C^{1,\alpha}(B_1')}\leq L$. Also assume $\alpha\leq\Gamma$. Then we have a $C=C(L,\Lambda,..)\geq0$ such that $\n u\n_{C^{1,\alpha}(\overline{B_{3/4}^+})}\leq C$.
\end{theorem}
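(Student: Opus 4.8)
The plan is to establish the $C^{1,\alpha}$ boundary estimate for the quadratic equation \eqref{d} by adapting the Morrey/Campanato-type iteration of Giaquinta--Giusti \cite{gg} to the half-ball, accounting for the two new features: the extra divergence term $-\mathrm{div}\,\mathbf{h}$ with $\mathbf{h}\in C^\Gamma$, and the inhomogeneity $f\in L^q$ with $q>n$ rather than $f\in L^\infty$. First I would reduce to homogeneous Dirichlet data: since $\phi\in C^{1,\alpha}(B_1')$ extends to a $C^{1,\alpha}$ function on $\overline{B_1^+}$, replacing $u$ by $u-\phi$ changes $\mathbf{A}(x,\zeta)$ into $\mathbf{A}(x,\zeta+\nabla\phi(x))$, which still satisfies the same structure conditions (ellipticity, Lipschitz in $\zeta$, $C^\alpha$ in $x$ after using $\phi\in C^{1,\alpha}$), and modifies $\mathbf h$ and $f$ only by terms controlled by $L$. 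So WLOG $u=0$ on $B_1'$, and one may extend $u$ by odd reflection across $\{x_n=0\}$ to work on full balls where convenient for the interior pieces, or simply run the argument on half-balls centered at boundary points.

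The core is a two-step perturbation/freezing scheme to get a Campanato-type decay estimate for $\int_{B_r^+\cap B_\rho(x_0)}|\nabla u - (\nabla u)_\rho|^2$. Step one: $L^2$ (Caccioppoli plus reverse Hölder / Gehring) to get a small higher integrability exponent and the basic energy estimate $\int_{B_\rho^+}|\nabla u|^2 \lesssim \rho^{n-2+2\sigma}$ for some $\sigma>0$, already using that $f\in L^q$ contributes $\|f\|_{L^q}\rho^{n(1-1/q)}\cdot(\text{osc }u)$-type terms via Hölder, and $\mathbf h\in C^\Gamma$ contributes $\rho^{n-2}\cdot\rho^{2\Gamma}$-type terms after subtracting $\mathbf h(x_0)$; this yields $u\in C^{0,\sigma}$ up to the boundary. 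Step two: freeze coefficients — compare $u$ on $B_\rho^+(x_0)$ with the solution $v$ of the constant-coefficient linear problem $-\mathrm{div}(\mathbf{A}(x_0,\nabla v))=0$ in $B_\rho^+(x_0)$, $v=u$ on $\partial(B_\rho^+(x_0))$ — actually with the homogeneous Dirichlet piece $v=0$ on the flat part. For $v$ one has the standard decay $\int_{B_t^+}|\nabla v-(\nabla v)_t|^2 \lesssim (t/\rho)^{n+2}\int_{B_\rho^+}|\nabla v-(\nabla v)_\rho|^2$ from boundary regularity of constant-coefficient systems (with the odd reflection, $v$ extends to a solution on the full ball). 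The difference $w=u-v$ satisfies $-\mathrm{div}(\mathbf A(x_0,\nabla w)) = \mathrm{div}(\mathbf A(x_0,\nabla u)-\mathbf A(x,\nabla u)) + \mathrm{div}(\mathbf h - \mathbf h(x_0)) - f$, and testing with $w$ gives
\[
\int_{B_\rho^+}|\nabla w|^2 \lesssim \rho^{2\alpha}\!\!\int_{B_\rho^+}(1+|\nabla u|)^2 + \rho^{2\Gamma}\rho^{n-2} + \|f\|_{L^q}^2\,\rho^{2n-2-2n/q}\,,
\]
where the last exponent, using $\Gamma=1-n/q$, is exactly $n-2+2\Gamma$, so all error terms are $O(\rho^{n-2+2\min(\alpha,\Gamma)}) = O(\rho^{n-2+2\alpha})$ since $\alpha\le\Gamma$. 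Combining with the decay for $v$ gives
\[
\int_{B_t^+}|\nabla u-(\nabla u)_t|^2 \lesssim (t/\rho)^{n+2}\!\int_{B_\rho^+}|\nabla u-(\nabla u)_\rho|^2 + \rho^{n-2+2\alpha}\,,
\]
and a standard iteration lemma (Giaquinta, Lemma on fast geometric convergence) yields $\int_{B_t^+(x_0)}|\nabla u-(\nabla u)_t|^2 \lesssim t^{n-2+2\alpha}$ uniformly for $x_0\in \overline{B_{3/4}^+}$, i.e. $\nabla u\in \mathcal{L}^{2,n-2+2\alpha}$, which is the Campanato space $C^{0,\alpha}$. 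That gives $\|u\|_{C^{1,\alpha}(\overline{B_{3/4}^+})}\le C$, with $C$ depending only on the structural constants.

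The main obstacle I expect is bookkeeping the boundary version cleanly: one must handle half-balls $B_\rho^+(x_0)$ both for $x_0$ on the flat boundary and for $x_0$ interior, and for boundary centers the comparison function must match the homogeneous Dirichlet condition on the flat part so that the odd reflection of $v$ is genuinely a solution of the constant-coefficient equation on a full ball (this is where the ellipticity structure of $\mathbf A(x_0,\cdot)$ being an honest linear map after differentiation matters, and where one should be slightly careful that $\mathbf A(x_0,0)$ may be nonzero — one subtracts it off, absorbing it into $\mathbf h$). The other delicate point is making precise that the reverse-Hölder/Gehring step still works up to the boundary with the $L^q$ and $C^\Gamma$ data, but this is exactly the ``appropriate modification'' the paper promises and follows the template in \cite{gg} with Hölder's inequality replacing the $L^\infty$ bound on $f$; since $q>n$, the exponent $n-2+2\Gamma$ produced by $f$ is strictly better than what is needed, so there is room to spare and no borderline issue arises.
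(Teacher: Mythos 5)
Your overall scheme (freezing the $x$-dependence, comparison on half-balls, Campanato iteration) is the same Giaquinta--Giusti strategy the paper follows, but two of its load-bearing steps do not hold as stated. First, the comparison problem $-\mathrm{div}\big(\mathbf{A}(x_0,\nabla v)\big)=0$ is \emph{not} a linear constant-coefficient problem: $\mathbf{A}(x_0,\cdot)$ is still nonlinear in $\zeta$, so the decay $\int_{B_t^+}|\nabla v-(\nabla v)_t|^2\lesssim (t/\rho)^{n+2}\int_{B_\rho^+}|\nabla v-(\nabla v)_\rho|^2$ is not available at the outset. The tangential derivatives $D_kv$ solve a linear equation whose coefficients $\partial_{\zeta}\mathbf{A}(x_0,\nabla v)$ are a priori only bounded measurable, so De Giorgi theory gives decay with exponent $n-2+2\delta$ for a small $\delta$ only; moreover your odd-reflection justification fails for a general $\mathbf{A}$ satisfying only the stated structure conditions (the reflected function solves the same equation only under a parity assumption on $\mathbf{A}$ in $\zeta_n$, which is not made). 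This is precisely why the paper's proof is two-stage: it works directly on half-balls using $D_kv=0$ on the flat part, runs an inductive Morrey-exponent improvement (if $\int_{B_R^+}(1+|Du|^2)\leq CR^{(k-1)\alpha}$ then $\int_{B_R^+}|Du-(Du)_R^+|^2\leq C'R^{k\alpha}$, for $k\alpha<n+2\delta$) to first obtain $u\in C^{1,\sigma}$ for some small $\sigma>0$, and only then, with $\nabla u$ bounded and H\"older so that the linearized coefficients are H\"older continuous, reruns the comparison with decay exponent $n+2-\varepsilon$ to reach $C^{1,\alpha}$. Your proposal skips this bootstrap, yet it is exactly what turns the term $\rho^{2\alpha}\int_{B_\rho^+}(1+|\nabla u|^2)$ (a priori only $O(\rho^{2\alpha})$) into $O(\rho^{n+2\alpha})$.

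Second, your exponent bookkeeping is off by $2$, and the final Campanato identification is wrong. To conclude $\nabla u\in C^{0,\alpha}$ you need $\int_{B_t^+}|\nabla u-(\nabla u)_t|^2\lesssim t^{\,n+2\alpha}$; the space $\mathcal{L}^{2,\,n-2+2\alpha}$ you invoke has exponent below $n$ and is of Morrey type, giving no H\"older control of the gradient. Correspondingly, the error terms actually come out of size $\rho^{n+2\alpha}$ and $\rho^{n+2\Gamma}$, not $\rho^{n-2+2\alpha}$: testing with $u-v$ and using Sobolev, the $f$-term contributes $R^{\,n+2(1-n/q)}\Vert f\Vert_{L^q}^2$ (your exponent $2n-2-2n/q$ agrees with this only when $n=2$), and the $\mathbf{h}$-term, after subtracting $\mathbf{h}(x_0)$, contributes $R^{\,n+2\Gamma}$ once one uses $\int_{B_R^+}1\sim R^n$ --- and controlling the coefficient-freezing term at the level $R^{\,n+2\alpha}$ again requires the prior bound $\int_{B_R^+}(1+|\nabla u|^2)\lesssim R^n$ from the missing bootstrap. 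So the plan is right in spirit, but as written the initial decay for the frozen problem, the gradient-boundedness iteration, and the Campanato exponent do not close the argument.
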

\begin{proof}
Let $B^+=\{x\in\rn:|x|\leq1,\ x_n>0\}$, $P=\{x\in\rn:|x|<1,\ x_n=0\}$. For $x_0\in\overline{B^+}$, set $$B_r^+=B_r^+(x_0)=\{x\in\rn:|x-x_0|<r,\ x_n>0\}.$$
For $x_0\in P$ and $0<R<dist(x_0,\partial B)$, consider the problem
\begin{equation}\label{s}
-div\big(\mathbf{A}(x_0,Dv)\big)=0\ \text{in }B_R^+(x_0),\ \ \ v=u\text{ on }\partial B_R^+(x_0).
\end{equation}
Note that $v\in W^{2,2}(B_\rho^+)$ for any $\rho<R$. For $k=1,2,\cdots,n-1$, the function $w=D_kv$ solves \begin{equation}\label{de}
-div\big(\mathbf{A}_{p_i}(\cdot,Dv)D_iw\big)=0\ \text{in }B_R^+(x_0)
\end{equation}with $w=0$ on $P_R(x_0):=\partial B_R^+(x_0)\cap P$. As $Dv$ is Holder continuous, by Dirichlet counterpart of Lemma \ref{Cc} one has
\begin{equation}
\int_{B_\rho^+(x_0)}|Dw|^2\leq C\bigg(\frac{\rho}{R}\bigg)^{n-2+2\delta}\int_{B_{R/2}^+(x_0)}|Dw|^2
\end{equation}for some $\delta>0$. Using the equation for $v$, we have that \begin{equation}
\int_{B_\rho^+(x_0)}|D^2v|^2\leq C\bigg(\frac{\rho}{R}\bigg)^{n-2+2\delta}\sum_{k=1}^{n-1}\int_{B_{R/2}^+(x_0)}|DD_kv|^2.
\end{equation}
Applying Caccioppoli's inequality, we conclude that \begin{equation}
\int_{B_\rho^+(x_0)}|D^2v|^2\leq CR^{-2}\bigg(\frac{\rho}{R}\bigg)^{n-2+2\delta}\sum_{k=1}^{n-1}\int_{B_{R}^+(x_0)}|D_kv|^2,
\end{equation} consequently by Poincare's inequality\begin{equation}
\sum_{k=1}^{n-1}\int_{B_\rho^+(x_0)}|D_kv|^2+\int_{B_\rho^+(x_0)}|D_nv-(D_nv)_\rho|^2\leq C\bigg(\frac{\rho}{R}\bigg)^{n+2\delta}\sum_{k=1}^{n-1}\int_{B_{R}^+(x_0)}|D_kv|^2.
\end{equation}
Set $$\Phi^+(x_0,\rho)=\sum_{k=1}^{n-1}\int_{B_\rho^+(x_0)}|D_ku|^2+\int_{B_\rho^+(x_0)}|D_nu-(D_nu)_\rho|^2.$$
Then using triangular inequality, we have \begin{equation}\label{it}
\Phi^+(x_0,\rho)\leq C\bigg(\frac{\rho}{R}\bigg)^{n+2\delta}\Phi^+(x_0,R)+C\int_{B_{R}^+(x_0)}|D(u-v)|^2.
\end{equation}
Using \eqref{d}, \eqref{s} we have that for $\varphi\in H_0^1(B_R^+(x_0))$
\begin{align*}
\int_{B_{R}^+(x_0)}[\mathbf{A}^j(x_0,Du)-\mathbf{A}^j(x_0,Dv)]\partial_j\varphi=&\int_{B_{R}^+(x_0)}[\mathbf{A}^j(x_0,Du)-\mathbf{A}^j(x,Du)]\partial_j\varphi\\
&+\int_{B_{R}^+(x_0)}\mathbf{h}\cdot D\varphi+\int_{B_{R}^+(x_0)}f\varphi.
\end{align*}
Take $\varphi=u-v$ and use ellipticity condition, Holder continuity of $A(\cdot,x),\mathbf{h}$, (here we are absorbing the divergence term) \begin{equation}\label{itt}
\int_{B_{R}^+(x_0)}|D(u-v)|^2\leq CR^\alpha\int_{B_{R}^+(x_0)}(1+|Du|)|D(u-v)|+C\int_{B_{R}^+(x_0)}|f||u-v|.
\end{equation}
Note that $$CR^\alpha\int_{B_{R}^+(x_0)}(1+|Du|)|D(u-v)|\leq CR^\alpha\bigg(\epsilon\int_{B_{R}^+(x_0)}|D(u-v)|^2+\frac{1}{4\epsilon}\int_{B_{R}^+(x_0)}(1+|Du|)^2\bigg)$$and 
\begin{align*}
\int_{B_{R}^+(x_0)}|f||u-v|&\leq\n f\n_{L^{2n/(n+2)}(B_{R}^+(x_0))}\n u-v\n_{L^{2^*}(B_{R}^+(x_0))}\\
&\leq C(n)\n f\n_{L^{2n/(n+2)}(B_{R}^+(x_0))}\n D(u-v)\n_{L^{2}(B_{R}^+(x_0))}\\
&\leq C(n)R^{1+n(1/2-1/q)}\n f\n_{L^q(B_{R}^+(x_0))}\n D(u-v)\n_{L^{2}(B_{R}^+(x_0))}\\
&\leq \varepsilon\n D(u-v)\n_{L^{2}(B_{R}^+(x_0))}^2+\frac{1}{4\varepsilon}C(n)^2R^{n+2(1-n/q)}\n f\n_{L^q(B_{R}^+(x_0))}^2.
\end{align*} Note that in the above we also needed a modification as $f\in L^q$, this was also done in \cite{az}. Using appropriate $\epsilon>0,\varepsilon>0$ we conclude\begin{equation}\label{dd}
\int_{B_{R}^+(x_0)}|D(u-v)|^2\leq CR^{2\alpha}\int_{B_{R}^+(x_0)}(1+|Du|^2)+CR^{n+2(1-n/q)}\n f\n_{L^q(B_{R}^+(x_0))}^2.
\end{equation} Hence from \eqref{itt} we conclude,\begin{equation}\label{iter}
\Phi^+(x_0,\rho)\leq C\bigg(\frac{\rho}{R}\bigg)^{n+2\delta}\Phi^+(x_0,R)+CR^{n+2\Gamma}\n f\n_{L^q(B_{R}^+(x_0))}^2+CR^{2\alpha}\int_{B_{R}^+(x_0)}(1+|Du|^2).
\end{equation}
Similarly one has to get for $B_R(x_0)\subset B^+$ \begin{equation}\label{iter1}
\Phi(x_0,\rho)\leq C\bigg(\frac{\rho}{R}\bigg)^{n+2\delta}\Phi(x_0,R)+CR^{n+2\Gamma}\n f\n_{L^q(B_{R}(x_0))}^2+CR^{2\alpha}\int_{B_{R}(x_0)}(1+|Du|^2)
\end{equation}with $$\Phi(x_0,\rho)=\sum_{k=1}^{n}\int_{B_\rho(x_0)}|D_ku-(D_ku)_\rho|^2.$$

\begin{lemma}
Suppose $u$ as in Theorem \ref{d0} and for some $C\geq0$ and natural number $k\geq1$, for each $x_0\in B_{3/4}^+$, for each $0<R<1/4$ we have \begin{equation}
\int_{B_{R}^+(x_0)}(1+|Du|^2)\leq CR^{(k-1)\alpha}
\end{equation} with $k\alpha<n+2\delta$. Then for each $0<R<1/4$,\begin{equation}
\int_{B_{R}^+(x_0)}|Du-(Du)_R^+|^2\leq C'R^{k\alpha}.
\end{equation}
\end{lemma}

\begin{proof}
It is similar to Proposition $2.2.$ in \cite{gg}. Note that we have extra terms with $R^{n+2\Gamma}$ in \eqref{iter}, \eqref{iter1}. But those terms can be dominated by $CR^{k\alpha}$. 
\end{proof}

We choose (after decreasing $\alpha>0$ a little bit if needed) an $m\in\mathbb{N}$ so that $$(m-1)\alpha<n<m\alpha<n+2\delta.$$ Then we use induction and Companato space argument as in \cite{gg} to conclude $u\in C^{1,\sigma}(\overline{B_{3/4}^+})$ with $\sigma=(m\alpha-n)/2$.\\
Now we consider \eqref{de}\begin{equation}
-div\big(\mathbf{A}_{p_i}(\cdot,Dv)D_iw\big)=0\ \text{in }B_R^+(x_0).
\end{equation}
Using Holder continuity of $Dv$ one has \begin{equation}
\int_{B_\rho^+(x_0)}|Dw|^2\leq C\bigg(\frac{\rho}{R}\bigg)^{n-\varepsilon}\int_{B_R^+(x_0)}|Dw|^2. 
\end{equation}
Proceeding as before we have for given $\varepsilon>0$, \begin{equation}
\Phi^+(x_0,\rho)\leq C\bigg(\frac{\rho}{R}\bigg)^{n+2-\varepsilon}\Phi^+(x_0,R)+CR^{n+2\Gamma}\n f\n_{L^q(B_{R}^+(x_0))}^2+CR^{2\alpha}\int_{B_{R}^+(x_0)}(1+|Du|^2).
\end{equation}Using boundedness of $Du$ we have as $\alpha\leq\Gamma$,
\begin{equation}
\Phi^+(x_0,\rho)\leq C\bigg(\frac{\rho}{R}\bigg)^{n+2-\varepsilon}\Phi^+(x_0,R)+C(1+\n f\n_{L^q(B_{R}^+(x_0))}^2)R^{n+2\alpha}.
\end{equation}and so taking $\varepsilon>0$ small enough \begin{equation}
\Phi^+(x_0,\rho)\leq C\bigg(\frac{\rho}{R}\bigg)^{n+2\alpha}\big(\Phi^+(x_0,R)+(1+\n f\n_{L^q(B_{R}^+(x_0))}^2)R^{n+2\alpha}\big).
\end{equation}As before we conclude $u\in C^{1,\alpha}(\overline{B_{3/4}^+})$.
\end{proof}
\begin{remark}
Note that we are able to use the mercenary of \cite{gg} as in the last term of the inequality \eqref{dd}, the power of $R$ is atleast $n+2\alpha$. 
\end{remark}

\section{Quadratic Conormal Case}\label{sc}
We now state the analogous  regularity  result in case of Neumann conditions. 

\begin{theorem}\label{C}
Let $u\in W^{1,2}(B_1^+)\cup L^\infty(B_1^+)$ solves
\begin{equation}\label{lc}
-div\big(\mathbf{A}(\cdot,\nabla u)\big)=-div\ \mathbf{h}+f\ \text{in }B_1^+,\ \ \ \mathbf{A}^n(\cdot,\nabla u)=\phi\text{ on }B_1'. 
\end{equation}
with $|\partial_{\zeta_j}\mathbf{A}^i|\leq L$, $\partial_{\zeta_j}\mathbf{A}^i\xi_i\xi_j\geq\Lambda|\xi|^2$, $|\mathbf{A}^i(x,\zeta)-\mathbf{A}^i(y,\zeta)|\leq L(1+|\zeta|)|x-y|^\alpha$, $\n f\n_{\li}\leq L$ such that $\alpha\leq\Gamma$, $\n \mathbf{h}\n_{C^\Gamma(B_1)}\leq L$, $\n\phi\n_{C^\alpha(B_1')}\leq L$. Then we have a $C=C(L,\Lambda,..)\geq0$ such that $\n u\n_{C^{1,\alpha}(\overline{B_{3/4}^+})}\leq C$.
\end{theorem}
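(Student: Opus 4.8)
The statement is the conormal (Neumann) analogue of Theorem \ref{d0}, so the plan is to mirror that proof, replacing the frozen-coefficient Dirichlet comparison problem by a frozen-coefficient Neumann one and adapting the boundary flattening of the derivatives. First I would set up the same notation: half-balls $B_R^+(x_0)$ centered at $x_0\in P=B_1'$ and interior balls $B_R(x_0)\subset B^+$, and for $x_0\in P$, $0<R<\mathrm{dist}(x_0,\partial B)$ introduce the comparison function $v$ solving the frozen equation $-\mathrm{div}(\mathbf{A}(x_0,Dv))=0$ in $B_R^+(x_0)$ with $v=u$ on the curved part $\partial B_R^+(x_0)\setminus P$ and the conormal condition $\mathbf{A}^n(x_0,Dv)=\phi(x_0)$ (the frozen constant value of $\phi$) on $P_R(x_0)=\partial B_R^+(x_0)\cap P$. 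The crucial structural point is that the tangential derivatives $w=D_kv$, $k=1,\dots,n-1$, still solve a linear divergence-form equation $-\mathrm{div}(\mathbf{A}_{p_i}(x_0,Dv)D_iw)=0$ in $B_R^+(x_0)$, but now with a homogeneous conormal condition $\mathbf{A}_{p_i}(x_0,Dv)D_iw\cdot\nu=0$ on $P_R(x_0)$ (obtained by differentiating the conormal condition tangentially), whereas before we had $w=0$ on $P_R(x_0)$. So the Dirichlet counterpart of Lemma \ref{Cc} used in the proof of Theorem \ref{d0} gets replaced by its Neumann counterpart (Lemma \ref{Cc} itself), giving the Morrey-type decay
\[
\int_{B_\rho^+(x_0)}|Dw|^2\leq C\Big(\frac{\rho}{R}\Big)^{n-2+2\delta}\int_{B_{R/2}^+(x_0)}|Dw|^2
\]
for some $\delta>0$. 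The normal derivative $D_nv$ is then controlled via the equation itself together with Caccioppoli's and Poincaré's inequalities exactly as in Theorem \ref{d0}, yielding the excess decay for $\Phi^+(x_0,\rho)=\sum_{k=1}^{n-1}\int_{B_\rho^+(x_0)}|D_ku|^2+\int_{B_\rho^+(x_0)}|D_nu-(D_nu)_\rho|^2$.

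Next I would carry out the comparison estimate for $u-v$. Testing the difference of the two equations with $\varphi=u-v\in W^{1,2}$ (which now is \emph{not} required to vanish on $P_R(x_0)$, only on the curved boundary, since we are in the Neumann setting), the boundary terms on $P_R(x_0)$ produce the term $\int_{P_R(x_0)}(\phi(x)-\phi(x_0))\varphi$, which by the Hölder continuity of $\phi$ with exponent $\alpha$ and a trace inequality is controlled by $CR^\alpha$ times the trace norm of $u-v$, hence by $\varepsilon\|D(u-v)\|_{L^2(B_R^+)}^2+CR^{n-1+2\alpha+1}$ type quantities — and since $n-1+2\alpha+1=n+2\alpha$, this new boundary term is dominated by $CR^{n+2\alpha}$, exactly the admissible order. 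The bulk terms are handled precisely as in \eqref{itt}–\eqref{dd}: the Hölder continuity of $x\mapsto\mathbf{A}^i(x,\zeta)$ and of $\mathbf{h}$ give $CR^\alpha\int(1+|Du|)|D(u-v)|$, and the $f$-term is estimated by Sobolev embedding ($L^{2n/(n+2)}\hookrightarrow$ dual of $L^{2^*}$) to produce the harmless power $R^{n+2(1-n/q)}=R^{n+2\Gamma}\geq R^{n+2\alpha}$ since $\alpha\leq\Gamma$. Absorbing and combining yields
\[
\Phi^+(x_0,\rho)\leq C\Big(\frac{\rho}{R}\Big)^{n+2\delta}\Phi^+(x_0,R)+CR^{n+2\Gamma}\|f\|_{L^q(B_R^+(x_0))}^2+CR^{2\alpha}\int_{B_R^+(x_0)}(1+|Du|^2),
\]
together with its interior analogue on balls $B_R(x_0)\subset B^+$ (which is literally \eqref{iter1}, unchanged).

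From this point the argument is identical to Theorem \ref{d0}: the iteration lemma (the Neumann version of the displayed lemma, proved exactly as Proposition 2.2 of \cite{gg} with the extra $R^{n+2\Gamma}$ and $R^{2\alpha}$ terms absorbed) bootstraps $\int_{B_R^+(x_0)}(1+|Du|^2)\leq CR^{(k-1)\alpha}$ to Campanato-type decay of $Du-(Du)_R^+$, and choosing $m\in\mathbb{N}$ with $(m-1)\alpha<n<m\alpha<n+2\delta$ and running the Campanato iteration gives first $u\in C^{1,\sigma}(\overline{B_{3/4}^+})$ with $\sigma=(m\alpha-n)/2$; feeding the boundedness of $Du$ back into the equation for $w=D_kv$ and using that $\alpha\leq\Gamma$ then improves the right-hand side to order $R^{n+2\alpha}$, giving the sharp conclusion $u\in C^{1,\alpha}(\overline{B_{3/4}^+})$ with the stated quantitative bound $C=C(L,\Lambda,\alpha,n,q)$. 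The one place requiring genuine care — and which I expect to be the main obstacle — is the boundary comparison step: one must (i) correctly identify the frozen conormal datum $\phi(x_0)$ so that $v$ is well-defined and the boundary term in the energy identity carries exactly the factor $\phi(x)-\phi(x_0)$, and (ii) verify that the tangential differentiation of the conormal condition indeed produces the homogeneous conormal condition for $w$ (this uses that $\mathbf{A}(x_0,\cdot)$ has no explicit $x$-dependence after freezing, so $\partial_{x_k}$ hits only $Dv$), so that the Neumann version of Lemma \ref{Cc} applies. Everything else is the Giaquinta–Giusti machinery carried over verbatim, with the two extra lower-order terms ($R^{n+2\Gamma}$ from $f\in L^q$ and $R^{2\alpha}$ from the variable coefficients) absorbed because $\alpha\leq\Gamma$.
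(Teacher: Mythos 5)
Your overall strategy is exactly the paper's: freeze both the coefficient matrix and the conormal datum at $x_0$, observe that tangential derivatives of the comparison function satisfy a linear divergence-form equation with homogeneous conormal condition, and feed the resulting excess decay into the Giaquinta--Giusti iteration with the two extra lower-order terms $R^{n+2\Gamma}$ and $R^{2\alpha}$ absorbed because $\alpha\leq\Gamma$. You also correctly single out the two genuinely delicate points (the frozen conormal datum and the tangential differentiation of the conormal condition). However, there is one step that would actually fail as written.

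You define $\Phi^+(x_0,\rho)=\sum_{k=1}^{n-1}\int_{B_\rho^+}|D_k u|^2+\int_{B_\rho^+}|D_n u-(D_n u)_\rho|^2$, which is the Dirichlet-case quantity from Theorem \ref{d0}. In the Dirichlet setting this works because the tangential derivatives $D_k v$, $k<n$, \emph{vanish} on $P_R(x_0)$, so Caccioppoli and Poincar\'e can be applied to $D_k v-0$ and $\int_{B_\rho^+}|D_k v|^2$ enjoys decay of order $(\rho/R)^{n+2\delta}$. In the conormal setting the tangential derivatives $D_k v$ do \emph{not} vanish on $P_R(x_0)$ --- they only satisfy the homogeneous conormal condition --- so $\int_{B_\rho^+}|D_k v|^2$ is generically of size $\rho^n$ and the stated decay is false. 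The correct Campanato quantity here is the full excess
\[
\Phi^+(x_0,\rho)=\sum_{k=1}^{n}\int_{B_\rho^+(x_0)}\bigl|D_k u-(D_k u)_\rho^+\bigr|^2,
\]
with means subtracted from \emph{all} components, and correspondingly the Caccioppoli step must be applied to $D_k v-(D_k v)_R^+$ (this is precisely the role of the $w-\lambda$ version of Lemma \ref{Cd}). With this replacement the decay $\Phi^+(x_0,\rho)\leq C(\rho/R)^{n+2\delta}\Phi^+(x_0,R)+\dots$ goes through and the rest of your iteration is sound.

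A second, smaller omission: since $u-v$ does not vanish on the flat part $B_R'$, integrating the divergence term $-\mathrm{div}\,\mathbf{h}$ by parts produces a boundary contribution $\int_{B_R'}(u-v)\,\mathbf{h}^n$ in addition to your $\int_{B_R'}(u-v)(\phi-\phi(x_0))$. The clean way to handle both at once --- as the paper does --- is to normalize $\mathbf{h}(x_0)=0$ (this costs nothing, since adding a constant vector to $\mathbf{h}$ leaves the combined bulk-plus-boundary pairing unchanged when $\varphi$ vanishes on the curved boundary), after which $|\phi-\phi(x_0)-\mathbf{h}^n|\leq CR^\alpha$ on $B_R'$ and $|\mathbf{h}|\leq CR^\Gamma\leq CR^\alpha$ on $B_R^+$, and the boundary term is estimated exactly as you outlined, yielding the admissible order $R^{n+2\alpha}$.
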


In order to prove this result, we need certain preparatory lemmas which are as follows. 

\begin{lemma}\label{Cd}
Let $v\in W^{1,2}(B_R^+)\cup L^\infty(B_R^+)$ solves
\begin{equation}
-div\ a\ Dv=0\ \text{in }B_R^+,\ \ \ a^{n,i}D_iv=0\text{ on }B_R'. 
\end{equation}with $|a^{i,j}|\leq L$, $\langle a\ \xi,\xi\rangle\geq\Lambda|\xi|^2$, $|a(x)-a(y)|\leq L|x-y|^\alpha$. Then $v$ satisfies,
\begin{equation}
\int_{B_r^+}|Dv|^2\leq C\frac{1}{(R-r)^2}\int_{B_{R}^+}(v-\lambda)^2.
\end{equation}and moreover if $a$ is a constant matrix
\begin{equation}
\int_{B_r^+}|Dw|^2\leq C\frac{1}{(R-r)^2}\int_{B_{R}^+}(w-\lambda)^2.
\end{equation}where $w=D_kv$, for $k=1,2,\cdots, n-1$.
\end{lemma}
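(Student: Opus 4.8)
\textbf{Proof plan for Lemma \ref{Cd}.} The statement is a standard Caccioppoli-type inequality adapted to the half-ball with a conormal (Neumann) boundary condition on the flat part $B_R'$, together with its analogue for the tangential derivatives $w = D_k v$ when the coefficients are constant. The plan is to prove both inequalities by testing the weak formulation with an appropriate cutoff multiple of $v - \lambda$ (resp.\ $w - \lambda$), exploiting the fact that the conormal condition $a^{n,i}D_i v = 0$ on $B_R'$ makes the boundary term in the integration by parts vanish, so that no trace term survives.

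For the first inequality, I would fix a cutoff $\eta \in C_c^\infty(B_R)$ with $\eta \equiv 1$ on $B_r$, $0 \le \eta \le 1$, and $|D\eta| \le C/(R-r)$, and test the weak form of $-\ div\ a\, Dv = 0$ (with conormal condition on $B_R'$) against $\varphi = \eta^2 (v-\lambda)$. This $\varphi$ is admissible precisely because the problem has a conormal condition on $B_R'$: the natural space of test functions consists of $W^{1,2}(B_R^+)$ functions vanishing near the curved part $\partial B_R^+ \setminus B_R'$, and no vanishing is required on $B_R'$. One gets
\[
\int_{B_R^+} \langle a\, Dv, D(\eta^2(v-\lambda))\rangle = 0,
\]
and expanding $D(\eta^2(v-\lambda)) = \eta^2 Dv + 2\eta(v-\lambda)D\eta$, using ellipticity $\langle a\,\xi,\xi\rangle \ge \Lambda|\xi|^2$ on the first term and boundedness $|a^{i,j}| \le L$ together with Young's inequality $2ab \le \epsilon a^2 + \epsilon^{-1} b^2$ on the cross term to absorb $\int \eta^2 |Dv|^2$ into the left side, yields
\[
\int_{B_R^+} \eta^2 |Dv|^2 \le C \int_{B_R^+} |D\eta|^2 (v-\lambda)^2 \le \frac{C}{(R-r)^2}\int_{B_R^+}(v-\lambda)^2,
\]
which is the claimed bound after restricting the left integral to $B_r^+$. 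The second inequality is obtained by the same computation applied to $w = D_k v$ for $k = 1,\dots,n-1$: when $a$ is a constant matrix, differentiating the equation tangentially shows $-\ div\ a\, Dw = 0$ in $B_R^+$, and differentiating the conormal condition $a^{n,i}D_i v = 0$ tangentially (legitimate since $D_k$ is a tangential derivative along the flat boundary $B_R'$ and $a$ is constant) gives $a^{n,i}D_i w = 0$ on $B_R'$; hence $w$ solves a problem of exactly the same structure as $v$, and the identical Caccioppoli argument applies verbatim.

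The main obstacle is not the algebra of the Caccioppoli estimate itself — that is entirely routine — but the justification that the relevant test functions and tangential differentiations are admissible up to the flat boundary. Concretely, one must know that $w = D_k v \in W^{1,2}$ near $B_R'$ (interior-in-the-tangential-direction difference quotients, uniformly bounded via the first inequality applied to translates, give this), that the differentiated conormal condition holds in the weak sense, and that $\eta^2(v-\lambda)$ and $\eta^2(w-\lambda)$ lie in the correct test-function space for the conormal problem. These are standard facts for the Neumann problem on a half-ball with Hölder (or merely bounded measurable) coefficients, so I would either cite the corresponding passage in \cite{gg} or sketch the difference-quotient argument briefly; the regularity hypothesis $|a(x)-a(y)| \le L|x-y|^\alpha$ is not even needed for these two Caccioppoli inequalities (only boundedness and ellipticity are used), and will instead enter in the subsequent comparison/freezing lemmas.
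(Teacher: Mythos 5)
Your proof plan matches the paper's argument: both test the weak formulation against $\eta^2(v-\lambda)$ with the standard cutoff, observe that the boundary integral over $B_R'$ vanishes by the conormal condition (and over the curved part because $\eta$ is compactly supported in $B_R$), then apply ellipticity and Young's inequality; and both handle $w=D_k v$ by noting that for constant $a$ the tangentially differentiated equation and boundary condition have the identical structure. Your additional remarks — that the H\"older condition on $a$ is not actually used here, and that one should justify admissibility of the test function and the tangential difference quotients near $B_R'$ — are correct refinements the paper leaves implicit, but the route is the same.
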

\begin{proof}
Let $\eta$ be a cut-off function on $B_R$ relative to $B_r$ i.e. $\eta\in C_0^\infty(B_R)$ and satisfies $$0\leq\eta\leq1,\ \ \eta=1\text{ in }B_r,\ \ |D\eta|\leq\frac{C}{R-r}.$$ 
Multiplying the equation for $v$ with $\eta^2(v-\lambda)$ and using integration by parts
$$0=-\int_{B_{R}^+}\eta^2(v-\lambda)div\ a\ Dv=\int_{B_{R}^+}[\eta^2Dv+2\eta(v-\lambda)D\eta]\ a\ Dv+\int_{\partial B_{R}^+}\eta^2(v-\lambda)a\ Dv\cdot\nu.$$
Using boundary condition and as $\eta\in C_0^\infty(B_R)$ we conclude the last term of above equation is zero. Hence using ellipticity condition
\begin{align*}
\int_{B_{R}^+}\eta^2|Dv|^2&\leq C\int_{B_{R}^+}\eta(v-\lambda)D\eta\cdot a\ Dv\\
&\leq C\varepsilon\int_{B_{R}^+}\eta^2|a\ Dv|^2+\frac{C}{\varepsilon}\int_{B_{R}^+}(v-\lambda)^2|D\eta|^2
\end{align*}Use boundedness of $a$, $D\eta$ and choose $\varepsilon>0$ appropriately to conclude the first result.\\
Since $a$ is constant matrix $w$ satisfies similar equation as $v$.
\end{proof}
\begin{lemma}\label{Ce}
Under the assumption of last Lemma \ref{Cd} $$\int_{B_{R/2}^+}|D^2v|^2\leq \frac{C}{R^4}\int_{B_{R}^+}v^2$$ if $a$ is a constant matrix.
\end{lemma}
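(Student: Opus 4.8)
The plan is to mimic the structure of the interior Caccioppoli–type estimate of Lemma \ref{Cd}, but now differentiating in the tangential directions and using the homogeneous conormal condition to kill the boundary term. Fix $R>0$ with $B_R^+\subset B_1^+$ and let $\eta\in C_0^\infty(B_R)$ be a standard cut-off with $\eta\equiv1$ on $B_{R/2}$ and $|D\eta|\leq C/R$. Since $a$ is a constant matrix, for each tangential index $k=1,\dots,n-1$ the difference quotient $w^h = \tau_{k,h} v$ (equivalently, after passing to the limit, $w = D_k v$) solves the same equation $-\mathrm{div}(a\,Dw)=0$ in $B_R^+$ together with the homogeneous conormal condition $a^{n,i}D_i w = 0$ on $B_R'$, because differentiating tangentially commutes with both the constant-coefficient operator and the flat boundary condition. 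Applying the first assertion of Lemma \ref{Cd} to $w$ on the pair $B_{3R/4}^+\subset B_R^+$ (with $\lambda=0$), we get $\int_{B_{3R/4}^+}|Dw|^2\leq \frac{C}{R^2}\int_{B_R^+}w^2$, and then the second assertion of Lemma \ref{Cd} (the Caccioppoli inequality for $w$ itself, valid since $a$ is constant) gives $\int_{B_{R/2}^+}|Dw|^2\leq \frac{C}{R^2}\int_{B_{3R/4}^+} w^2 \leq \frac{C}{R^2}\int_{B_{3R/4}^+}|Dv|^2$. Summing over $k=1,\dots,n-1$ controls all the mixed second derivatives $D_iD_kv$ with $i=1,\dots,n$, $k\leq n-1$.

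The only second derivative not yet controlled is $D_{nn}v$, and here I would use the equation itself: expanding $-\mathrm{div}(a\,Dv)=0$ and isolating the $D_{nn}v$ term, one has $a^{nn}D_{nn}v = -\sum_{(i,j)\neq(n,n)} a^{ij}D_{ij}v$, and since $a^{nn}\geq\Lambda>0$ by ellipticity this expresses $D_{nn}v$ pointwise as a bounded linear combination of the already-estimated mixed derivatives. Hence $\int_{B_{R/2}^+}|D^2v|^2\leq \frac{C}{R^2}\int_{B_{3R/4}^+}|Dv|^2$. Finally, applying the first estimate of Lemma \ref{Cd} once more on the pair $B_{3R/4}^+\subset B_R^+$ with $\lambda$ chosen to be the average of $v$ over $B_R^+$ (or simply $\lambda=0$, as the statement is phrased with $v^2$ on the right) yields $\int_{B_{3R/4}^+}|Dv|^2\leq \frac{C}{R^2}\int_{B_R^+}v^2$, and chaining the two bounds gives the claimed $\int_{B_{R/2}^+}|D^2v|^2\leq \frac{C}{R^4}\int_{B_R^+}v^2$.

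The step I expect to require the most care is the rigorous justification that $w=D_kv$ is an admissible solution of the differentiated problem up to the flat part of the boundary — i.e. the difference-quotient argument. One works with tangential difference quotients $\tau_{k,h}v$ for $k\leq n-1$, which preserve membership in $W^{1,2}$ of a slightly smaller half-ball and respect the flat boundary $B_R'$; one derives a uniform-in-$h$ bound via the Caccioppoli inequality applied to $\tau_{k,h}v$ (legitimate because $a$ is constant, so $\tau_{k,h}v$ solves the same equation with the same homogeneous conormal condition), and then lets $h\to0$. This is routine but is where the constant-coefficient hypothesis and the tangential restriction $k\leq n-1$ are genuinely used; for a normal derivative one cannot form an interior difference quotient near $B_R'$, which is exactly why the equation must be invoked to recover $D_{nn}v$. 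Everything else is Young's inequality and the cut-off bookkeeping already present in the proof of Lemma \ref{Cd}.
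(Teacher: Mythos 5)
Your proposal is correct and follows essentially the same route as the paper: apply Caccioppoli (the second assertion of Lemma \ref{Cd}) to the tangential derivatives $w=D_kv$, recover $D_{nn}v$ pointwise from the constant-coefficient equation using $a^{nn}\geq\Lambda$, and then chain with one more Caccioppoli estimate for $v$ itself to descend from $|Dv|^2$ to $v^2$, costing $R^{-2}$ each time. The extra care you devote to the tangential difference-quotient justification that $w$ solves the same homogeneous conormal problem is exactly the point the paper leaves implicit, and your remark that this is where the constant-coefficient and tangential-direction hypotheses enter is on target.
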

\begin{proof}
We have $$0=\int_{B_{R}^+}\phi\ div\ a\ Dv=\int_{B_{R}^+}\phi\ a^{i,j}\ D_{ij}v.$$
As $a^{n,n}$ is fairly away from zero by ellipticity, it follows for $0<r<R/2$, $$\int_{B_{r}^+}|D_{nn}v|^2\leq C\sum_{k=1}^{n-1}\int_{B_{R/2}^+}|DD_kv|^2.$$
Hence $$\int_{B_{r}^+}|D^2v|^2\leq C\sum_{k=1}^{n-1}\int_{B_{R/2}^+}|DD_kv|^2\leq \frac{C}{R^2}\sum_{k=1}^{n-1}\int_{B_{3R/4}^+}|D_kv|^2\leq \frac{C}{R^4}\int_{B_R^+}v^2.$$
\end{proof}
\begin{corollary}Let $v$ be as in last Lemma \ref{Ce}, then for any non negative integer $k$, $$\n v\n_{H^k(B_{1/2}^+)}\leq C\n v\n_{L^2(B_1^+)}\text{ and }\n Dv\n_{H^k(B_{1/2}^+)}\leq C\n Dv\n_{L^2(B_1^+)}.$$
\end{corollary}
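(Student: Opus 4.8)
The plan is a standard higher-regularity bootstrap for the constant-coefficient conormal problem, using that on the flat portion $B_1'\subset\{x_n=0\}$ tangential differentiation preserves both the equation and the homogeneous conormal boundary condition. Since $a$ is constant the equation is just $a^{ij}D_{ij}v=0$ in $B_1^+$, and the crucial algebraic point is that ellipticity, $a^{n,n}\ge\Lambda>0$, lets us solve for the purely normal second derivative, $D_{nn}v=-\tfrac{1}{a^{n,n}}\sum_{(i,j)\ne(n,n)}a^{ij}D_{ij}v$, which is a constant-coefficient combination of second derivatives each carrying at least one tangential index.

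First I would treat the tangential derivatives. For $k\in\{1,\dots,n-1\}$ the function $w=D_kv$ is again a $W^{1,2}$ weak solution of $-\mathrm{div}\,a\,Dw=0$ on slightly smaller half-balls, and differentiating $a^{n,i}D_iv=0$ along the flat set $B_1'$ gives $a^{n,i}D_iw=0$ on $B_1'$; the justification that $v\in W^{2,2}$ up to $B_1'$ (so that these manipulations are legitimate in the trace sense) is exactly the tangential difference-quotient argument already carried out in the proof of Lemma \ref{Ce}, together with the formula for $D_{nn}v$ above. Then Caccioppoli, i.e. Lemma \ref{Cd} with $\lambda=0$, applied to $w$ and to $v$, yields $\n D_kv\n_{H^1(B_r^+)}\le C(r,r')\,\n v\n_{L^2(B_{r'}^+)}$; iterating over a finite chain of shrinking half-balls, for every multi-index $\beta'$ with $\beta'_n=0$ the function $D^{\beta'}v$ still solves the same homogeneous conormal problem and $\n D^{\beta'}v\n_{H^1(B_r^+)}\le C\,\n v\n_{L^2(B_{r'}^+)}$.

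Next I would dispatch the normal derivatives purely algebraically: differentiating the identity for $D_{nn}v$ and inducting downward on the number of $x_n$-derivatives, every $D^\beta v$ becomes a constant-coefficient linear combination of derivatives of the same total order with at most one $x_n$-derivative, i.e. of $D_n^{\gamma_n}D^{\gamma'}v$ with $\gamma'$ tangential and $\gamma_n\in\{0,1\}$. Combining with the previous step, for $|\beta|\le k$ one bounds $\n D^\beta v\n_{L^2(B_{1/2}^+)}$ by finitely many terms $\n D^{\gamma'}v\n_{H^1(B_{3/4}^+)}\le C\,\n v\n_{L^2(B_1^+)}$, and summing over $|\beta|\le k$ gives $\n v\n_{H^k(B_{1/2}^+)}\le C\,\n v\n_{L^2(B_1^+)}$. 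For the $Dv$-estimate I would apply this component by component: the tangential components $D_jv$, $j\le n-1$, solve the same problem, so $\n D_jv\n_{H^k(B_{1/2}^+)}\le C\,\n D_jv\n_{L^2(B_{3/4}^+)}\le C\,\n Dv\n_{L^2(B_1^+)}$; for $D_nv$, each of its derivatives is a derivative of $v$ of order $\ge1$, which by the same normal-reduction identity is a constant-coefficient combination of derivatives of $v$ with at most one $x_n$-derivative and at least one tangential one, hence controlled in $L^2(B_{1/2}^+)$ by $\n D_jv\n_{H^k(B_{3/4}^+)}\le C\,\n Dv\n_{L^2(B_1^+)}$ for a suitable tangential $j$, the only leftover term being $D_nv$ itself. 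Collecting terms gives $\n Dv\n_{H^k(B_{1/2}^+)}\le C\,\n Dv\n_{L^2(B_1^+)}$.

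I do not anticipate a real obstacle; this is a routine elliptic bootstrap. The one point requiring care is the very first reduction: before differentiating the equation and, above all, the conormal condition, one must know $v\in W^{2,2}$ up to $B_1'$ and that $a^{n,i}D_i(D_kv)=0$ holds in the trace sense — both follow from tangential difference quotients plus the algebraic formula for $D_{nn}v$, exactly as in Lemma \ref{Ce}, and this is precisely where flatness of the boundary portion enters. The rest is bookkeeping of the finitely many radius reductions so that all constants stay finite.
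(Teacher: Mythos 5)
Your proposal is correct and coincides with the argument the paper leaves implicit (the corollary is stated without proof, being the routine iteration of Lemma \ref{Cd} and Lemma \ref{Ce}): since $a$ is constant and the boundary portion flat, tangential derivatives solve the same homogeneous conormal problem, so Caccioppoli bootstraps all tangential regularity on shrinking half-balls, and normal derivatives are recovered algebraically from $a^{ij}D_{ij}v=0$ via $a^{n,n}\geq\Lambda$. Your care about justifying $v\in W^{2,2}$ up to $B_1'$ and the trace sense of the differentiated conormal condition by tangential difference quotients is exactly the right (and only) delicate point.
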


\begin{lemma}\label{Ca}
Under the assumption of last Lemma \ref{Ce} for $0<r<R$, $$\int_{B_r^+}|Dv|^2\leq C\bigg(\frac{r}{R}\bigg)^n\int_{B_{R}^+}|Dv|^2.$$ 
\end{lemma}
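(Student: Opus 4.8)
The statement to prove is Lemma \ref{Ca}: for $v$ solving the constant-coefficient equation $-\operatorname{div}(a\,Dv)=0$ in $B_R^+$ with $a^{n,i}D_iv=0$ on $B_R'$ (so that $a$ is a constant matrix, as in Lemma \ref{Ce}), one has
\[
\int_{B_r^+}|Dv|^2\leq C\Bigl(\frac{r}{R}\Bigr)^n\int_{B_R^+}|Dv|^2,\qquad 0<r<R.
\]
This is the standard Morrey-type decay estimate at the flat boundary. Let me sketch the proof.

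The plan is to reduce to small $r$ and use the higher integrability/regularity available from the previous corollary. First, for $r \geq R/2$ the inequality is trivial with $C = 2^n$, since $\int_{B_r^+}|Dv|^2 \leq \int_{B_R^+}|Dv|^2$ and $(r/R)^n \geq 2^{-n}$. So assume $0 < r < R/2$. By scaling (replace $v(x)$ by $v(Rx)$, which solves the same type of equation with the same ellipticity constants on $B_1^+$) it suffices to prove that there is a universal $C$ with $\int_{B_r^+}|Dv|^2 \leq C r^n \int_{B_{1/2}^+}|Dv|^2$ for $0<r<1/2$.

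\smallskip

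The key step: by the Corollary following Lemma \ref{Ce}, $Dv \in H^k(B_{1/2}^+)$ for every $k$, with $\|Dv\|_{H^k(B_{1/2}^+)} \leq C\|Dv\|_{L^2(B_1^+)}$; in particular, taking $k$ large enough (Sobolev embedding $H^k(B_{1/2}^+) \hookrightarrow L^\infty(B_{1/2}^+)$ in dimension $n$, valid for $k > n/2$), we get $\|Dv\|_{L^\infty(B_{1/4}^+)} \leq C\|Dv\|_{L^2(B_1^+)}$ — here one first applies the corollary on a slightly larger ball to land inside $B_{1/2}^+$, so really $\|Dv\|_{L^\infty(B_{1/4}^+)} \leq C\|Dv\|_{L^2(B_{1/2}^+)}$. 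Hence for $0 < r < 1/4$,
\[
\int_{B_r^+}|Dv|^2 \leq |B_r^+|\,\|Dv\|_{L^\infty(B_{1/4}^+)}^2 \leq C r^n \|Dv\|_{L^2(B_{1/2}^+)}^2,
\]
which is exactly the claim (and for $1/4 \leq r < 1/2$ it is again trivial). Undoing the scaling gives the result for general $R$, with a constant $C$ depending only on $n, L, \Lambda$.

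\smallskip

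I do not expect a genuine obstacle here — the hard analytic work (the Caccioppoli inequality Lemma \ref{Cd}, the $H^2$ estimate Lemma \ref{Ce}, and the iterated $H^k$ bounds in the Corollary) has already been done. The only points requiring a little care are: (i) correctly chaining the interior-type estimates so that all the balls on which one applies the previous lemmas sit inside $B_R^+$ (equivalently $B_1^+$ after scaling), which forces the intermediate radii $1/2$, $1/4$ to be chosen with a fixed gap; (ii) checking that the differentiated function $w = D_k v$ for $k=1,\dots,n-1$ satisfies the same homogeneous Neumann problem — but this is precisely the content of Lemma \ref{Cd} and was used to derive Lemma \ref{Ce}, so it is already available; and (iii) keeping track that the constant is scale-invariant, which holds because the PDE, the boundary condition, and the $L^2$-based norms are all homogeneous under $x \mapsto Rx$. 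Thus the proof is essentially: trivial case for $r$ comparable to $R$; scaling; then Sobolev embedding applied to the already-established $H^k$ bound on $Dv$.
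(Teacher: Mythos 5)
Your proposal is correct and follows essentially the same route as the paper: reduce to $R=1$ by scaling, treat $r$ comparable to $R$ trivially, and for small $r$ bound $\int_{B_r^+}|Dv|^2$ by $Cr^n\sup|Dv|^2$, controlling the sup via Sobolev embedding $H^k\hookrightarrow L^\infty$ ($k>n/2$) and the corollary $\|Dv\|_{H^k(B_{1/2}^+)}\leq C\|Dv\|_{L^2(B_1^+)}$. Your extra intermediate radius $1/4$ is harmless but unnecessary, since the corollary already provides the gap between $B_{1/2}^+$ and $B_1^+$.
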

\begin{proof}
Enough to show for $R=1$. Choose $k>n/2$ and then, for $0<r<1/2$, $$\int_{B_r^+}|Dv|^2\leq Cr^n\sup_{B_{1/2}^+}|Dv|^2\leq Cr^n\n Dv\n_{H^k(B_{1/2}^+)}^2\leq Cr^n\n Dv\n_{L^2(B_1^+)}=Cr^n\int_{B_1^+}|Dv|^2.$$ 
For $1/2\leq r\leq1$, $$\int_{B_r^+}|Dv|^2\leq\int_{B_1^+}|Dv|^2\leq2^nr^n\int_{B_1^+}|Dv|^2.$$
\end{proof}
\begin{lemma}\label{Cc}
Let $w\in W^{1,2}(B_1^+)\cup L^\infty(B_1^+)$ solves
\begin{equation}\label{Cb}
-div\ a\ Dw=0\ \text{in }B_1^+,\ \ \ a^{n,i}D_iw=0\text{ on }B_1'. 
\end{equation}with $|a^{i,j}|\leq L$, $\langle a\ \xi,\xi\rangle\geq\Lambda|\xi|^2$, $|a(x)-a(y)|\leq L|x-y|^\alpha$. Then $w$ satisfies, for any $\varepsilon>0$ small enough, if $0<r<R\leq R_0$ (for some $R_0\leq1$),
\begin{equation}
\int_{B_r^+}|Dw|^2\leq C\bigg(\frac{r}{R}\bigg)^{n-\varepsilon}\int_{B_{R/2}^+}|Dw|^2.
\end{equation}
\end{lemma}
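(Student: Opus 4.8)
The plan is to prove Lemma \ref{Cc} by freezing the coefficients and comparing $w$ with the solution of the constant-coefficient problem, exactly in the spirit of the standard Campanato-type perturbation argument (Morrey / Giaquinta--Giusti). Fix $x_0 \in B_1'$ (or $x_0 \in B_1^+$, which is the easier interior case and will be handled the same way), and for $B_R^+ = B_R^+(x_0)$ with $R \le R_0$, let $v$ solve
\begin{equation*}
-\,\mathrm{div}\,\big(a(x_0) Dv\big) = 0 \ \text{ in } B_R^+, \quad a^{n,i}(x_0) D_i v = 0 \ \text{ on } B_R' , \quad v = w \ \text{ on } \partial B_R^+ \cap \{x_n>0\}.
\end{equation*}
First I would record the decay estimate for $v$: by Lemma \ref{Ca} applied to $v$ (its first-order derivatives in the tangential directions solve a constant-coefficient equation with the same conormal condition, and $D_{nn}v$ is controlled by Lemma \ref{Ce}), one gets $\int_{B_\rho^+}|Dv|^2 \le C(\rho/R)^n \int_{B_R^+}|Dv|^2$ for $\rho \le R$. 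Combined with the minimality-type bound $\int_{B_R^+}|Dv|^2 \le \int_{B_R^+}|Dw|^2$, this yields $\int_{B_\rho^+}|Dv|^2 \le C(\rho/R)^n \int_{B_R^+}|Dw|^2$.

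Next I would estimate the difference $z = w - v \in H^1_0$-type (vanishing on the curved part, conormal on the flat part). Testing the two equations against $z$ and using ellipticity,
\begin{equation*}
\Lambda \int_{B_R^+}|Dz|^2 \le \int_{B_R^+} \big(a(x) - a(x_0)\big) Dw \cdot Dz \le L R^\alpha \Big(\int_{B_R^+}|Dw|^2\Big)^{1/2}\Big(\int_{B_R^+}|Dz|^2\Big)^{1/2},
\end{equation*}
so $\int_{B_R^+}|Dz|^2 \le C R^{2\alpha}\int_{B_R^+}|Dw|^2$ (the conormal boundary term drops because $a^{n,i}(x)D_i w$ is not zero but $a^{n,i}(x_0)D_i w$'s mismatch is again an $R^\alpha$ perturbation that gets absorbed; one should be a little careful here and integrate by parts so that the boundary discrepancy is also bounded by $L R^\alpha \int |Dw||Dz|$). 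Then for $\rho \le R/2$,
\begin{equation*}
\int_{B_\rho^+}|Dw|^2 \le 2\int_{B_\rho^+}|Dv|^2 + 2\int_{B_\rho^+}|Dz|^2 \le C\Big(\frac{\rho}{R}\Big)^n \int_{B_R^+}|Dw|^2 + C R^{2\alpha}\int_{B_R^+}|Dw|^2 .
\end{equation*}

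Finally I would feed this into the standard iteration lemma (e.g. Lemma 2.1, Ch.\ III of Giaquinta, or Proposition 2.2 of \cite{gg}): an inequality of the form $\psi(\rho) \le C[(\rho/R)^n + R^{2\alpha}]\psi(R)$ implies $\psi(\rho) \le C(\rho/R)^{n-\varepsilon}\psi(R)$ for every $\varepsilon>0$, provided $R_0$ is chosen small enough depending on $\varepsilon$. Replacing $B_R^+$ by $B_{R/2}^+$ on the right (using that the comparison function $v$ is built on $B_R^+$ but we only need $\int_{B_R^+}|Dw|^2$, and then a covering/absorption gives the half-ball on the right) completes the proof. The main obstacle I anticipate is the careful handling of the conormal boundary term in the energy comparison: unlike the Dirichlet case in Section \ref{sd} where $z$ vanishes on $B_R'$, here one must verify that the difference of the conormal data for the frozen and unfrozen operators contributes only an $O(R^\alpha)$ term after integration by parts, so that it can be absorbed on the left; everything else is a routine transcription of the classical hole-filling/Campanato machinery, with the flat-boundary reflection structure already in place from Lemmas \ref{Cd}--\ref{Ca}.
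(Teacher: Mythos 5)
Your proposal is correct and follows essentially the same route as the paper: the paper rewrites \eqref{Cb} as a frozen-coefficient conormal problem with divergence data $[a(0)-a]Dw$ and splits $w=v_1+v_2$, where $v_1$ is exactly your comparison function $v$ (estimated via Lemma \ref{Ca}) and $v_2$ is your difference $z$ with energy bounded by $CR^{2\alpha}\int_{B_R^+}|Dw|^2$, before invoking the same iteration lemma. The conormal mismatch you worry about is handled, as you suspected, automatically by the weak formulation with test functions vanishing only on the curved boundary.
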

\begin{proof}
Let $h\in L^2(B_1^+,\rn)$. First study the problem \begin{equation}
-div\ a(0)\ Dv=-div\ h\ \text{in }B_R^+,\ \ v=w\text{ on }\partial B_R^+\smallsetminus B_R',\ \ a^{n,i}(0)D_iv=h_n\text{ on }B_1'. 
\end{equation}Consider the problems \begin{equation}
-div\ a(0)\ Dv_1=0\ \text{in }B_R^+,\ \ v_1=w\text{ on }\partial B_R^+\smallsetminus B_R',\ \ a^{n,i}(0)D_iv_1=0\text{ on }B_1'; 
\end{equation}
\begin{equation}
-div\ a(0)\ Dv_2=-div\ h\ \text{in }B_R^+,\ \ v_2=0\text{ on }\partial B_R^+\smallsetminus B_R',\ \ a^{n,i}(0)D_iv_2=h_n\text{ on }B_1'. 
\end{equation}
For existence of $v_1$, use \cite{lie2} or Theorem 2.3 in \cite{t}. Then by Lemma \ref{Ca}$$\int_{B_r^+}|Dv_1|^2\leq C\bigg(\frac{r}{R}\bigg)^n\int_{B_{R}^+}|Dv_1|^2.$$ 
On the the hand
$$\Lambda\int_{B_R^+}|Dv_2|^2\leq\int_{B_R^+}a(0)Dv_2\cdot Dv_2=\int_{B_R^+}h\ Dv_2.$$
Note that 
$$\int_{B_R^+}|h\ Dv_2|\leq C\epsilon'\int_{B_R^+}|Dv_2|^2+\frac{C}{\epsilon'}\int_{B_R^+}h^2.$$ 
Choose $\epsilon'>0$ to conclude $$\int_{B_R^+}|Dv_2|^2\leq C\int_{B_R^+}h^2.$$
Now \begin{align*}
\int_{B_r^+}|Dv|^2&\leq 2\int_{B_r^+}|Dv_1|^2+2\int_{B_r^+}|Dv_2|^2\\
&\leq C\bigg(\frac{r}{R}\bigg)^n\int_{B_{R}^+}|Dv_1|^2+2\int_{B_r^+}|Dv_2|^2\\
&\leq C\bigg(\frac{r}{R}\bigg)^n\int_{B_{R}^+}|Dv|^2+C\int_{B_r^+}|Dv_2|^2\\
&\leq C\bigg(\frac{r}{R}\bigg)^n\int_{B_{R}^+}|Dv|^2+C\int_{B_R^+}h^2.
\end{align*}
We can rewrite \eqref{Cb} as\begin{equation}
-div\ a(0)\ Dw=-div\ \big([a(0)-a]\ Dw\big)\ \text{in }B_1^+,\ \ \ a^{n,i}(0)D_iw=[a(0)-a]^{n,i}\ D_iw\text{ on }B_1'. 
\end{equation}
Note that in $B_{R}^+$, $$|[a(0)-a]\ Dw|^2\leq CR^{2\alpha}|Dw|^2$$
 then we have for $0<r<R$
$$\int_{B_r^+}|Dw|^2\leq C\bigg[\bigg(\frac{r}{R}\bigg)^n+R^{2\alpha}\bigg]\int_{B_{R}^+}|Dw|^2.$$
Let $0<\varepsilon<n$. By iteration lemma one has for $0<r<R\leq R_0$ 
$$\int_{B_r^+}|Dw|^2\leq C\bigg(\frac{r}{R}\bigg)^{n-\varepsilon}\int_{B_{R}^+}|Dw|^2$$ for some $R_0\leq1$.
\end{proof}

With Lemma \ref{Cd}, \ref{Cc} in hand, we now proceed to the proof of Theorem 5.1. 

\begin{proof} [Proof of Theorem 5.1]
Consider the PDE
\begin{equation}
-div\ \big(\mathbf{A}(0,\nabla v)\big)=0\text{ in }B_R^+,\ \ v=u\text{ on }\partial B_R^+\smallsetminus B_R',\  \ \mathbf{A}^n(0,\nabla v)=\phi(0)\text{ on }B_R'. 
\end{equation}For existence of $v$, use Theorem 7.1, Theorem 7.2 in \cite{t}, and the references therein. 
Note that by method of difference quotient one has $v\in W^{2,2}(B_r^+)$ for any $0<r<R$. For $k=1,2,\cdots,n-1$, the function $w=D_kv$ solves \begin{equation}\label{w}
-div\big(\mathbf{A}_{p_i}(0,Dv)D_iw\big)=0\ \text{in }B_R^+(x_0),\  \ \mathbf{A}_{p_i}^n(0,Dv)D_iw=0\text{ on }B_R'.
\end{equation}
Note that $Dv$ is Holder continuous. Then by Lemma \ref{Cc}, for $0<\rho\leq R/2\leq R_0$,
\begin{equation}
\int_{B_\rho^+(x_0)}|Dw|^2\leq C\bigg(\frac{\rho}{R}\bigg)^{n-2+2\delta}\int_{B_{R/2}^+(x_0)}|Dw|^2.
\end{equation}
Using the equation for $v$, we have that \begin{equation}
\int_{B_\rho^+(x_0)}|D^2v|^2\leq C\bigg(\frac{\rho}{R}\bigg)^{n-2+2\delta}\sum_{k=1}^{n-1}\int_{B_{R/2}^+(x_0)}|DD_kv|^2.
\end{equation}
Applying Caccioppoli's inequality, see Lemma \ref{Cd}, to \eqref{w} we conclude that \begin{equation}
\int_{B_\rho^+(x_0)}|D^2v|^2\leq CR^{-2}\bigg(\frac{\rho}{R}\bigg)^{n-2+2\delta}\sum_{k=1}^{n-1}\int_{B_{R}^+(x_0)}|D_kv-(D_kv)_R^+|^2.
\end{equation} Then  Poincare's inequality implies
\begin{equation}
\sum_{k=1}^{n}\int_{B_\rho^+(x_0)}|D_kv-(D_kv)_\rho^+|^2\leq C\bigg(\frac{\rho}{R}\bigg)^{n+2\delta}\sum_{k=1}^{n-1}\int_{B_{R}^+(x_0)}|D_kv-(D_kv)_R^+|^2.
\end{equation}Set $$\Phi^+(x_0,\rho)=\sum_{k=1}^n\int_{B_\rho^+(x_0)}|D_ku-(D_ku)_\rho^+|^2.$$ Then$$\Phi^+(x_0,\rho)\leq C\bigg(\frac{\rho}{R}\bigg)^{n+2\delta}\Phi^+(x_0,R)+C\int_{B_{R}^+(x_0)}|Du-Dv|^2.$$
Now
\begin{align*}
\int_{B_{R}^+}[\mathbf{A}&(0,Du)-\mathbf{A}(0,Dv)]\cdot D(u-v)=\int_{B_{R}^+}[\mathbf{A}(0,Du)-\mathbf{A}(x,Du)]\cdot D(u-v)\\
&\ +\int_{B_{R}^+}\mathbf{h}\cdot D(u-v)+\int_{B_{R}^+}(u-v)f-\int_{B_{R}'}(u-v)(\phi-\phi(0)-\mathbf{h}^n).
\end{align*}
Using divergence theorem as in section $4$ of \cite{lie}, taking $\mathbf{h}(0)=0$ and as $\alpha\leq\Gamma$,
\begin{align*}
\int_{B_{R}'}|u-v||\phi-\phi(0)-\mathbf{h}^n|&\leq CR^\alpha\int_{B_{R}'}|u-v|\\
&\leq CR^\alpha\int_{B_{R}^+}|Du-Dv|\\
&\leq C\varepsilon\int_{B_{R}^+}|Du-Dv|^2+C\varepsilon^{-1} R^{n+2\alpha}.
\end{align*}
For $0<\rho<R/2$, 
\begin{align*}
\Phi^+(x_0,\rho)\leq C\bigg(\frac{\rho}{R}\bigg)^{n+2\delta}\Phi^+(x_0,R)+C\big(R^{2\alpha}\n 1+|Du|\n_{L^2(B_{R}^+)}^2\\
+R^{n+2(1-n/q)}\n f\n_{L^q(B_{R}^+)}^{2}+R^{n+2\alpha}\big).
\end{align*}
Now we proceed as in Dirichlet case for rest of the proof.
\end{proof}

\section*{Acknowledgement}

I am thankful to Agnid Banerjee for various discussions and suggestions regarding this problem. I am also thankful to K. Sandeep for his guidance as my Ph.D. supervisor. I would also like to thank the referee for carefully reading the manuscript and for many incisive  comments and suggestions that immensely helped in improving the presentation of the manuscript.

\end{document}